\pgfplotsset{compat=newest}
\theoremstyle{plain}% default
\newtheorem{corollary}{\bfseries \rmfamily Corollary}
\newtheorem{proposition}{\bfseries \rmfamily Proposition}
\newtheorem{definition}{\bfseries \rmfamily Definition}
\newtheorem{remark}{\itshape \rmfamily Remark}
\newcommand{\Figure}{Figure}
\newcommand{\Table}{Table}
\newcommand{\Section}{Section}
\newcommand{\Sections}{Sections}
\newcommand{\Appendix}{Appendix}
\newcommand{\Proposition}{Proposition}
\newcommand{\Corollary}{Corollary}
\newcommand{\Remark}{Remark}
\newcommand{\Fig}[1]{\Figure~\ref{#1}}
\newcommand{\Tab}[1]{\Table~\ref{#1}}
\newcommand{\Sect}[1]{\Section~\ref{#1}}
\newcommand{\Sects}[1]{\Sections~\ref{#1}}
\newcommand{\Prop}[1]{\Proposition~\ref{#1}}
\newcommand{\Cor}[1]{\Corollary~\ref{#1}}
\newcommand{\Rem}[1]{\Remark~\ref{#1}}
\newcommand{\ie}{i.e.\xspace}
\newcommand\BibTeX{{\rmfamily B\kern-.05em \textsc{i\kern-.025em b}\kern-.08em
T\kern-.1667em\lower.7ex\hbox{E}\kern-.125emX}}
\begin{document}

\runningheads{P.\ Ladev\`eze, F.\ Pled and L.\ Chamoin}{New bounding techniques for goal-oriented error estimation}

\title{New bounding techniques for goal-oriented error estimation applied to linear problems}
%\footnotemark[2]
\author{P.~Ladev\`eze\affil{1}\comma\corrauth, F.~Pled\affil{1} and L.~Chamoin\affil{1}}

\address{\affilnum{1}LMT-Cachan (ENS-Cachan/CNRS/Paris 6 University), 61 Avenue du Pr\'esident Wilson, 94235 CACHAN Cedex, FRANCE}

\corraddr{P. Ladev\`eze, LMT-Cachan (ENS-Cachan/CNRS/Paris 6 University), 61 Avenue du Pr\'esident Wilson, 94235 CACHAN Cedex}

%\cgs{<Contract/grant sponsor name (no number)>}
%\cgsn{<Contract/grant sponsor name>}{<number>}

%: abstract
\begin{abstract}

The paper deals with the accuracy of guaranteed error bounds on outputs of interest computed from approximate methods such as the finite element method. A considerable improvement is introduced for linear problems thanks to new bounding techniques based on Saint-Venant's principle. The main breakthrough of these optimized bounding techniques is the use of properties of homothetic domains which enables to cleverly derive guaranteed and accurate boundings of contributions to the global error estimate over a local region of the domain. Performances of these techniques are illustrated through several numerical experiments.

\end{abstract}

\keywords{Verification; Finite Element Method; Goal-oriented error estimation; Saint Venant's principle; Guaranteed error bounds}

\maketitle

%\footnotetext[2]{Please ensure that you use the most up to date
%class file,
%available from the NME Home Page at\\
%\href{http://www3.interscience.wiley.com/journal/1430/home}{\texttt{http://www3.interscience.wiley.com/journal/1430/home}}}

%: 1. Introduction
\section{Introduction}\label{1}

In the context of finite element model verification, research works currently focus on the evaluation of the numerical quality of specific quantities of practical interest. Such worthwhile methods, known as \textit{goal-oriented error estimation methods}, have been emerging for about twenty years \cite{Bab84,Par97,Ran97,Cir98,Pru99,Lad99a,Str00,Bec01} and have been recently extended to a wide range of mechanical problems \cite{Par06bis,Gal06,Cha07,Cha08,Lad08,Lad09,Pan10,Lad10}. Nevertheless, among all of them, only a few actually lead to robust and relevant estimates ensuring the recovery of strict and high-quality error bounds.

In order to achieve robust goal-oriented error estimation, a general method, initially introduced in \cite{Lad08}, has been prone to considerable developments. It is based on classical and powerful tools, such as the concept of constitutive relation error (CRE), and more recently handbook techniques \cite{Cha08,Cha09} and projection procedures \cite{Lad10}. First,
methods based on the CRE enable to set up a guaranteed global error estimate through the construction of an admissible pair, which constitutes the key technical point. Various techniques enable to construct such an admissible solution from the prescribed data of the reference problem and the approximate finite element (FE) solution (see \cite{Lad04,Par06,Moi09,Cot09,Gal09,Lad10bis,Ple11,Ple12} for further information). Then, a measure of the non-verification of the constitutive relation by this admissible pair (with respect to an energy norm) leads to an upper bound of the global discretization error. The goal-oriented error estimator considered in this work is based on extraction (or adjoint-based) techniques and involves the solution of an auxiliary problem, often referred to as dual (or adjoint in the linear case, or mirror in the non-linear case \cite{Lad08}) problem. An accurate solution of this auxiliary problem is required to achieve sharp local error bounds. A natural but intrusive way to properly solve this problem merely consists of performing a local space(-time) refinement of the mesh being considered. Alternatively, handbook techniques, initially proposed in \cite{Str00bis}, rely on a local enrichment of the solution of the auxiliary problem introduced through a partition of unity method (PUM). The enrichment functions correspond to (quasi-)exact local solutions of the auxiliary problem calculated either analytically or numerically in (semi-)infinite domains. This enrichment is particularly well-suited to handle truly pointwise quantities of interest without resorting to any regularization of the quantity of interest being studied. In practice, this technique preserves the non-intrusive nature offered by the FE framework as it circumvents the need to perform local refinement of the auxiliary problem to provide precise error bounds. Lastly, projection procedures allow the treatment of non-linear quantities of interest without having recourse to any classical linearization techniques and keeping intact the strict nature of the resulting bounds.

However, among those various tools, only the use of handbook techniques leads to the derivation of outstandingly accurate local error bounds, provided that appropriate enrichment functions are available in a library of pre-computed handbooks functions. Indeed, without additional techniques, the classical bounding technique may achieve low-quality guaranteed error bounds even with an extremely refined mesh. The reason why such error estimates are crude comes from the basic bounding technique, \ie the Cauchy-Schwarz inequality, which is only efficient if terms involved in this inequality are close to be collinear, \ie when the main contributions to the discretization errors related to both reference and auxiliary problems are located in the same regions. In other words, when the zone of interest does not coincide with the most concentrated error regions associated to the reference problem, the classical bounding technique yields inaccurate and sometimes useless bounds.

The present work sheds some light on this crucial question. In this paper, we revisit the classical bounding technique based on global error estimation methods applied to both reference and auxiliary problems, and introduce two new bounding techniques to alleviate problems related to the classical bounding technique. These improved techniques are based on both classical and innovative tools; they lean on Saint-Venant's principle and are therefore restricted to linear problems. Basic extraction techniques and specific homotheticity properties are employed to get guaranteed and relevant bounds of better quality than the classical bounding technique. More precisely, the main idea consists of considering separately the zone of interest and the remainder of the structure. Both enhanced techniques use homothetic domains properties to cleverly derive accurate bounds over a local region surrounding the zone of interest. In this work, we examine only the case of linear quantities of interest with respect to the displacement field associated to linear elasticity problems. Finally, these new basic bounding techniques are combined with handbook techniques to solve in a very accurate manner the adjoint problem, thus resulting in efficient error bounds. 2D applications are performed and show the important resulting gain on the quality of the computed bounds using the two new basic bounding techniques.

The paper is organized as follows: after this introduction, \Sect{2} presents both reference and adjoint problems and defines the discretization error; \Sect{3} recalls basics on goal-oriented error estimation using the concept of admissible solutions and associated constitutive relation error; \Sects{4} and \ref{5} provide a detailed description of the first and second improved bounding techniques, respectively, both based on homothetic domains properties; several numerical experiments are presented in \Sect{6} to demonstrate the effectivity and relevance of the two proposed techniques; eventually, \Sect{7} draws some conclusions and may provide road maps to future developments.

%: 2.
\section{Reference and adjoint problems}\label{2}

%: 2.1
\subsection{Reference problem and discretization error}\label{2.1}

Let us consider a mechanical structure occupying an open bounded domain $\Om$, with Lipschitz boundary $\dOm$. The prescribed loading acting on $\Om$ consists of: a displacement field $\Uu_d$ on part $\partial_1 \Om \subset \dOm$ $(\partial_1 \Om \neq \emptyset)$; a traction force density $\und{F}_d$ on the complementary part $\partial_2 \Om$ of $\dOm$ such that $\overline{\partial_1 \Om \cup \partial_2 \Om} = \dOm$, $\partial_1 \Om \cap \partial_2 \Om = \emptyset$; a body force field $\und{f}_d$ within $\Om$. Structure $\Om$ is assumed to be made of a material with isotropic, homogeneous, linear and elastic behavior characterized by Hooke's tensor $\K$. Assuming a quasi-static loading as well as isothermal and small perturbations state, the reference problem which describes the behavior of the structure consists of finding a displacement/stress pair $(\uu, \cont)$ in the space domain $\Om$, which verifies:
\begin{itemize}
\item 
the kinematic conditions:
\begin{multline}\label{eq1:CAref}
\shoveright{\uu \in \Ucb; \quad \uu_{\restrictto{\partial_1 \Om}} = \Uu_d; \quad \defo(\uu) = \frac{1}{2}\big(\nabla \uu + \nabla^T \uu \big);}
\end{multline}
\item
the weak form of equilibrium equations:
\begin{multline}\label{eq1:SAref}
\shoveright{\cont \in \Scb; \quad \forall \: \uu^{\ast} \in \Ucb_0, \quad \intO \Tr\big[\cont \: \defo(\uu^{\ast})\big] \dO = \intO \und{f}_d \cdot \uu^{\ast} \dO + \int_{\partial_2 \Om}\und{F}_d \cdot \uu^{\ast} \dS;}
\end{multline}
\item
the constitutive relation:
\begin{multline}\label{eq1:RDCref}
\shoveright{\cont(M) = \K \: \defo\big(\uu(M)\big) \quad \forall \: M \in \Om,}
\end{multline}
\end{itemize}
where $\defo(\uu)$ represents the classical linearized strain tensor corresponding to the symmetric part of the gradient of displacement field $\uu$. Affine spaces $\Ucb = \left\{ \und{u} \in [\Hc^1(\Om)]^3 \right\}$ and $\Scb = \left\{ \cont \in \Mc_s(3) \cap [\Lc^2(\Om)]^6 \right\}$ guarantee the existence of finite-energy solutions, $\Mc_s(n)$ representing the space of symmetric square matrices of order $n$. Lastly, $\Ucb_0 \subset \Ucb$ denotes the vectorial space associated to $\Ucb$, \ie containing the functions subjected to homogeneous kinematic boundary conditions over $\partial_1 \Om$.

In practical applications, the exact solution of the reference problem, hereafter denoted $(\uu, \cont)$, remains usually out of reach and only an approximate solution, referred to as $(\uu_h, \cont_h)$, can be obtained through numerical approximation methods (such as the finite element method (FEM) associated with a space mesh $\Mc_h$ mapping $\Om$). Such a numerical approximation is searched in a discretized space $\Ucb_{h} \times \Scb_{h} \subset \Ucb \times \Scb$. A displacement-type FEM leads to a displacement field $\uu_h$ verifying kinematic constraints (\ref{eq1:CAref}) and a stress field $\cont_h$ computed \textit{a posteriori} from constitutive relation (\ref{eq1:RDCref}).

The resulting discretization error, denoted $\und{e}_h = \uu - \uu_h$, can be evaluated by means of:
\begin{itemize}
\item a global measure, such as the classical energy norm $\lnorm{\bullet}_{u, \Om} = \left( \intO \Tr\big[\K \: \defo(\bullet) \: \defo(\bullet)\big] \dO \right)^{1/2}$, providing a global discretization error $e^{\glob}_h = \lnorm{\und{e}_h}_{u, \Om}$.
\item a local measure defined with respect to a specified output of interest $I(\uu)$ of the problem, providing a local error $e^{\loc}_h = I(\uu) - I(\uu_h)$. Under the assumption of a linear quantity of interest with respect to displacement $\uu$, it merely reads: $e^{\loc}_h = I(\und{e}_h)$.
\end{itemize}

%: 2.2
\subsection{Adjoint problem}\label{2.2}

The quantity of interest, hereafter denoted $I$, is a goal-oriented output, such as the mean value of a stress component over a local region or the displacement value at a specific point, for instance. These meaningful quantities of engineering interest are usually defined by means of extraction techniques \cite{Par97,Str00,Ohn01}, \ie by expressing the local quantity $I$ being considered in the global form involving global extraction operators, also called extractors. In this work, for the sake of simplicity, the quantity of interest is represented as a linear functional $\Lc$ of displacement field $\uu$ on a finite support under the following global form:
\begin{align}\label{eq1:interestquantity}
I = \Lc(\uu) = \intO \left( \Tr\big[\tilde{\cont}_{\Sigma} \: \defo(\uu)\big] + \tilde{\und{f}}_{\Sigma} \cdot \uu \right) \dO,
\end{align}
where so-called extractors $\tilde{\cont}_{\Sigma}$ and $\tilde{\und{f}}_{\Sigma}$, known analytically, can be mechanically viewed as a prestress field and a body force field, respectively. These extractors can be defined explicitly or implicitly depending on the selected output of interest. Let us note that the extraction technique provides a natural framework to handle a wide range of local quantities. In the following, let $\Iex = \Lc(\uu)$ and $\Ih = \Lc(\uu_h)$ be the unknown exact value of the quantity of interest $I$ being studied and its approximate value obtained through the FEM, respectively.

Once the quantity of interest has been put into such a global form, the classical approach then consists of introducing an auxiliary problem, also called adjoint problem, which is similar to the reference problem, except that the external mechanical loading $(\und{F}_d, \und{f}_d)$ is replaced by the extractors on the one hand, and the non-homogeneous Dirichlet boundary conditions are changed to homogeneous kinematic constraints on the other hand. The adjoint problem consists of finding a displacement/stress pair $(\tilde{\uu} ,\tilde{\cont})$, in the space domain $\Om$, which verifies:
\begin{itemize}
\item 
the kinematic conditions:
\begin{multline}\label{eq1:CAadjoint}
\shoveright{\tilde{\uu} \in \Ucb_0;}
\end{multline}
\item
the weak form of equilibrium equations:
\begin{multline}\label{eq1:SAadjoint}
\shoveright{\tilde{\cont} \in \Scb; \quad \forall \: \uu^{\ast} \in \Ucb_0, \quad \intO \Tr\big[\tilde{\cont} \: \defo(\uu^{\ast})\big] \dO = \Lc(\uu^{\ast}) = \intO \left( \Tr\big[\tilde{\cont}_{\Sigma} \: \defo(\uu^{\ast})\big] + \tilde{\und{f}}_{\Sigma} \cdot \uu^{\ast} \right) \dO;}
\end{multline}
\item
the constitutive relation:
\begin{multline}\label{eq1:RDCadjoint}
\shoveright{\tilde{\cont}(M) = \K \: \defo\big(\tilde{\uu}(M)\big) \quad \forall \: M \in \Om.}
\end{multline}
\end{itemize}

Under the assumption of a linear functional $\Lc$, the following equality holds:
\begin{align}\label{eq1:influencefunction}
\Iex - \Ih = \Lc(\uu) - \Lc(\uu_h) = \Lc(\und{e}_h) = \intO \Tr\big[\K \: \defo(\tilde{\uu}) \: \defo(\und{e}_h)\big] \dO = \Rc_h(\tilde{\uu}).
\end{align}
The solution $\tilde{\uu}$ of the adjoint problem can thus be viewed as an influence function \cite{Pru99} indicating how the weak residual functional $\Rc_h$ (with respect to the discretization error $\und{e}_h$) affects the local discretization error $\Lc(\und{e}_h)$ (with respect to the specific measure $\Lc$). Let us note that solutions $\uu$ and $\tilde{\uu}$ of reference and adjoint problems, respectively, are mutually adjoint to each other \cite{Bec01} insofar as 
\begin{align}\label{eq2:influencefunction}
\Lc(\uu) = \intO \Tr\big[\defo(\uu) \: \K \: \defo(\tilde{\uu})\big] \dO = \Fc(\tilde{\uu}),
\end{align}
where
\begin{align}\label{eq3:influencefunction}
\Fc(\tilde{\uu}) = \intO \und{f}_d \cdot \tilde{\uu} \dO + \int_{\partial_2 \Om}\und{F}_d \cdot \tilde{\uu} \dS.
\end{align}

As for the reference problem, the exact solution $(\tilde{\uu} ,\tilde{\cont})$ of the adjoint problem remains out of reach in most practical applications, and one can only obtain an approximate solution, denoted $(\tilde{\uu}_h ,\tilde{\cont}_h)$. This last solution lies in discretized FE spaces associated with a space mesh $\tilde{\Mc}_h$, mapping the physical domain $\Om$, regardless of the FE mesh $\Mc_h$ used to solve the reference problem.

%: 3.
\section{Basics on goal-oriented error estimation based on constitutive relation error}\label{3}

We review here the classical procedure to get guaranteed local error bounds on functional outputs which constitute valuable and relevant information in standard engineering practice.

%: 3.1
\subsection{Constitutive relation error}\label{3.1}

In verification research activities, setting up robust error estimation methods has become an overriding concern. The construction of what is called an \textit{admissible pair} is currently an essential and crucial step in order to obtain guaranteed error bounds. An admissible pair $(\hat{\uu}_h, \hat{\cont}_h)$ verifies all the equations of the reference problem, apart from constitutive relation (\ref{eq1:RDCref}). On the one hand, a kinematically admissible displacement field is generally obtained by merely taking $\hat{\uu}_h$ equal to $\uu_h$ (apart from the case of incompressible materials, see \cite{Lad92}). On the other hand, the derivation of a statically admissible stress field can be achieved by using various balance techniques suitable to error estimation \cite{Lad04,Par06,Moi09,Cot09,Gal09,Lad10bis,Ple11,Ple12}. Such an admissible stress field $\hat{\cont}_h$ can be recovered from the data and the FE stress field $\cont_h$ alone. Starting from an admissible solution $(\hat{\uu}_h, \hat{\cont}_h)$ provided by one of the existing techniques, one can measure the global residual on constitutive relation (\ref{eq1:RDCref}), called the constitutive relation error (CRE) and hereafter referred to as $e_{\cre} \equiv e_{\cre}(\hat{\uu}_h, \hat{\cont}_h) = \lnorm{\hat{\cont}_h - \K \: \defo(\hat{\uu}_h)}_{\cont,\Om}$, with $\lnorm{\bullet}_{\cont, \Om} = \left( \intO \Tr\big[ \bullet \: \K^{-1} \: \bullet\big] \: \dO \right)^{1/2}$. Computing the CRE $e_{\cre}(\hat{\uu}_h, \hat{\cont}_h)$ provides a guaranteed upper bound of the global discretization error $\lnorm{\und{e}_h}_{u, \Om}$, as the well-known Prager-Synge hypercircle theorem \cite{Pra47} leads to the following bounding inequality:
\begin{align}\label{eq1:PragerSynge}
\lnorm{\und{e}_h}^2_{u, \Om} = \lnorm{\uu - \hat{\uu}_h}^2_{u, \Om} \leqslant \lnorm{\uu - \hat{\uu}_h}^2_{u, \Om} + \lnorm{\cont - \hat{\cont}_h}^2_{\cont,\Om} = e^2_{\cre},
\end{align}
which conveys the guaranteed nature of the CRE $e_{\cre}$.

Introducing the average admissible field:
\begin{align}\label{eq1:sigmahathm}
\hat{\cont}^m_h = \frac{1}{2} \left(\hat{\cont}_h + \K \: \defo(\hat{\uu}_h)\right),
\end{align}
one can directly deduce another fundamental relation, called the Prager-Synge's equality:
 \begin{align}\label{eq1:sigmahathmecre}
\lnorm{\cont - \hat{\cont}^m_h}_{\cont, \Om} = \frac{1}{2} e_{\cre}.
\end{align}
Equations (\ref{eq1:PragerSynge}) and (\ref{eq1:sigmahathmecre}) are key relations to derive guaranteed error bounds in both global and local robust error estimation methods.

In the same way as for the reference problem, an admissible solution of the adjoint problem, hereafter referred to as $(\hat{\tilde{\uu}}_h, \hat{\tilde{\cont}}_h)$, can be derived from one of the existing equilibration techniques. Then, the associated CRE $\tilde{e}_{\cre} \equiv e_{\cre}(\hat{\tilde{\uu}}_h, \hat{\tilde{\cont}}_h)$ of the adjoint problem can be computed leading to a global estimate of the discretization error $\tilde{\und{e}}_h = \tilde{\uu} - \tilde{\uu}_h$ of the adjoint problem.

Now, let us focus on the main principles of the classical bounding technique involved in goal-oriented error estimation method based on extraction techniques and CRE.

%: 3.2
\subsection{Basic identity and classical bounding technique}\label{3.2}

The definition of the quantity of interest $I$ recast in the global form (\ref{eq1:interestquantity}) and properties of both admissible solutions $(\hat{\uu}_h, \hat{\cont}_h)$ and $(\hat{\tilde{\uu}}_h, \hat{\tilde{\cont}}_h)$ lead to the following basic identity:
\begin{align}
\Iex - \Ih - \Ihh & = \intO \Tr\big[(\cont - \hat{\cont}_h^m) \: \K^{-1} \: (\hat{\tilde{\cont}}_h - \K \: \defo(\hat{\tilde{\uu}}_h)) \big] \dO \nonumber\\
& = \lscalproda{\cont - \hat{\cont}_h^m}{\hat{\tilde{\cont}}_h - \K \: \defo(\hat{\tilde{\uu}}_h)}_{\cont, \Om},\label{eq1:basicidentity}
\end{align}
with $\displaystyle\hat{\cont}_h^m = \frac{1}{2} (\hat{\cont}_h + \K \: \defo(\hat{\uu}_h))$; $\lscalproda{\bullet}{\circ}_{\cont, \Om}$ denotes an energetic inner product defined on the stress field space over $\Om$. $\Ihh$ can be viewed as a computable correction term involving known quantities of both reference and adjoint problems:
\begin{align}
\Ihh & = \displaystyle \intO \Tr\big[\hat{\tilde{\cont}}^m_h \: \K^{-1} \: (\hat{\cont}_h - \K \: \defo(\hat{\uu}_h)) \big] \dO + \Lc(\hat{\uu}_h - \uu_h) \nonumber\\
& = \lscalproda{\hat{\tilde{\cont}}^m_h}{\hat{\cont}_h - \K \: \defo(\hat{\uu}_h)}_{\cont, \Om} + \Lc(\hat{\uu}_h - \uu_h), \label{eq1:Ihh}
\end{align}
with $\displaystyle\hat{\tilde{\cont}}^m_h = \frac{1}{2} (\hat{\tilde{\cont}}_h + \K \: \defo(\hat{\tilde{\uu}}_h))$, and leading to a new approximate solution $\Ih + \Ihh$ of the exact value $\Iex$ of the quantity of interest. A complete and detailed proof of this basic identity can be found in \cite{Lad08,Lad10}.

The fundamental equality (\ref{eq1:basicidentity}), which does not require any orthogonality property of the FE solutions (contrary to pioneering techniques \cite{Par97,Pru99,Bec01}) and allows to decouple discretizations of reference and adjoint problems, is the cornerstone of the classical bounding technique as well as the improved ones described in section \ref{4}. Besides, this bounding technique could conceivably be extended to problems solved using numerical approximation methods different from the FEM.

Subsequently, the classical bounding procedure merely consists of applying the Cauchy-Schwarz inequality to (\ref{eq1:basicidentity}) with respect to inner product $\lscalproda{\bullet}{\circ}_{\cont, \Om}$ and then using Prager-Synge's equality (\ref{eq1:sigmahathmecre}). This yields:
\begin{align}\label{eq1:localerrorbounding}
\labs{\Iex - \Ih - \Ihh} \leqslant \frac{1}{2} \: e_{\cre} \: \tilde{e}_{\cre}.
\end{align}
Subsequently, the derivation of strict lower and upper bounds $(\xiinf,\xisup)$ of $\Iex$ (or, equivalently, of the local error $\Iex - \Ih$) can be achieved straightforwardly, just having a global error estimation procedure at hand:
\begin{align}\label{eq1:classicallocalerrorbounding}
\xiinf \leqslant \Iex \leqslant \xisup,
\end{align}
with
\begin{align}\label{eq1:classicallocalerrorbounds}
& \xiinf = \Ih + \Ihh - \frac{1}{2} \: e_{\cre} \: \tilde{e}_{\cre};\\
& \xisup = \Ih + \Ihh + \frac{1}{2} \: e_{\cre} \: \tilde{e}_{\cre}.
\end{align}

Besides, owing to the independent natures of spatial discretizations associated to reference and adjoint problems, a convenient way to achieve accurate and sharp bounds of $\Iex$ is to perform a local space refinement of the adjoint mesh $\tilde{\Mc}_h$ alone around the zone of interest in order to properly solve the adjoint problem while keeping a reasonable computational cost. In most practical applications, the discretization error related to the adjoint problem is concentrated in the vicinity of the zone of interest, whereas that related to the reference problem may be scattered around zones which present some singularities or other error sources. However, when the error related to the reference problem is mostly located outside and far from the zone of interest, the classical bounding technique may yield large and low-quality local error bounds and thus makes useless bounding result (\ref{eq1:classicallocalerrorbounding}). This is the point that we are revisiting here.

The proposed bounding techniques we present in the two following sections are intended to circumvent this serious drawback inherent to the classical technique in order to get sharp local error bounds.

%: 4.
\section{First improved bounding technique}\label{4}

%4.1
\subsection{Definitions and preliminaries}\label{4.1}

Let us consider a reference subdomain, denoted $\om_1$ and included in $\Om$, defined by a point $O$ and a geometric shape. The set of homothetic domains $\omla$ is defined as:
\begin{align}
\omla = \Hc[O;\la](\om_1)
\end{align}
where $\Hc[O;\la]$ stands for the homothetic transformation centered in point $O$, called homothetic center, and parameterized by a nonzero positive number $\la \in \intervalcc{0}{\la_{\max}}$, also called magnification ratio, scale factor or similitude ratio, such that $\omla \subset \Om$, \ie $\omla$ is a subset of $\Om$ (see \Fig{fig1:homothetic_domains}). The geometric shape defining the set of homothetic domains $\omla$ is arbitrary, but in practice these physical domains are assumed to be basic, such as a circle or a rectangle in 2D, and a sphere or a rectangular cuboid (also called rectangular parallelepiped or right rectangular prism) in 3D, for instance.

By considering the parameterization $(\la, s)$ (resp. $(\la, s_1, s_2)$) of a given domain $\omla \subset \Om$ in 2D (resp. 3D), where $s$ denotes the curvilinear abscissa along boundary $\domla$ (see \Fig{fig1:homothetic_domains}), the following equalities hold:
\begin{align}
\intola \bullet \dO & = \int_{\la'=0}^{\la} \left[ \intdolaprime \bullet \dS \right] \dla'; \label{eq1:linkintolaintolaprime}\\
\frac{\diff}{\dla} \left[ \intola \bullet \dO \right] & = \intdola \bullet \dS. \label{eq1:diffintola}
\end{align}
where $\dS$ is defined in a generic manner as:
\begin{equation}\label{de1:dS}
\dS = \begin{cases}
\ds & \text{in 2D}; \\
a(s_1,s_2) \ds_1 \ds_2 & \text{in 3D},
\end{cases}
\end{equation}
and depends on the geometric shape defining the set of homothetic domains.

\begin{figure}
\centering\includegraphics[scale = 0.4]{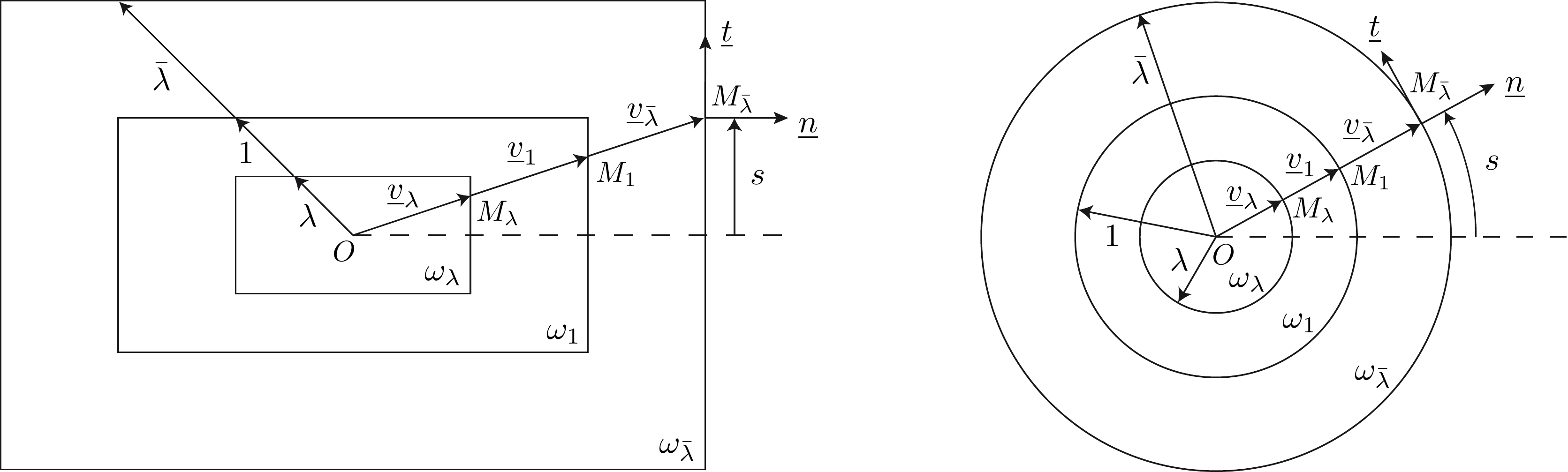}
\caption{Rectangular (left) and circular (right) homothetic domains in two dimensions.}\label{fig1:homothetic_domains}
\end{figure}
For a given pair $(\omla,\omlabar)$ of homothetic domains included in $\Om$, represented in \Fig{fig1:homothetic_domains} and parameterized by $(\la, \labar)$, such that $\omla \subset \omlabar \subset \Om$, \ie $\la \in \intervaloc{0}{\labar}$, the position $\vu_{\la}$ of a point $M_{\la}$ along $\domla$ can be defined from the position $\vu_{\labar}$ of the corresponding point $M_{\labar}$ along $\domlabar$ by the following relation:
\begin{align}
\vu_{\la} = \begin{dcases}
\displaystyle\frac{\la}{\labar} \: \vu_{\labar}(s) \quad \text{parameterized by} \ (\la,s) & \text{in 2D}; \\
\displaystyle\frac{\la}{\labar} \: \vu_{\labar}(s_1,s_2) \quad \text{parameterized by} \ (\la,s_1,s_2) & \text{in 3D},
\end{dcases}
\end{align}
where $s$ (resp. $s_1$ and $s_2$) represent the curvilinear abscissa along boundary $\domlabar$ in 2D (resp. 3D).

Such a parameterization leads to the following relations:
\begin{align}
\intola \bullet \dO & = \int_{\la'=0}^{\la} \left[ \intdolabar \bullet \: \vu_{\labar} \cdot \und{n} \dS \right] {\left(\frac{\la'}{\displaystyle\labar}\right)}^n \frac{1}{\displaystyle\labar} \dla'; \label{eq1:linkintolaintdolabar}\\
\intdola \bullet \dS & = \intdolabar \bullet \dS \: {\left(\frac{\la}{\displaystyle\labar}\right)}^n, \label{eq1:linkintdolaintdolabar}
\end{align}
where $n$ denotes a positive integer defined as:
\begin{equation}\label{def1:n}
n = \begin{cases}
1 & \text{in 2D}; \\
2 & \text{in 3D}.
\end{cases}
\end{equation}

Let us define inner products and associated norms over a given homothetic domain $\omla \subset \Om$:
\begin{align}
\lscalproda{\bullet}{\circ}_{u, \omla} & = \intola \Tr\big[\defo(\bullet) \: \K \: \defo(\circ)\big] \dO \quad \text{and} \quad \lnorm{\bullet}_{u, \omla} = \left( \intola \Tr\big[\defo(\bullet) \: \K \: \defo(\bullet)\big] \dO \right)^{1/2}; \label{normuOm} \\
\lscalproda{\bullet}{\circ}_{\cont, \omla} & = \intola \Tr\big[ \bullet \: \K^{-1} \: \circ\big] \dO \quad \text{and} \quad \lnorm{\bullet}_{\cont, \omla} = \left( \intola \Tr\big[ \bullet \: \K^{-1} \: \bullet\big] \dO \right)^{1/2}. \label{normcontOm}
\end{align}

Similarly, let us also define inner products and associated norms over boundary $\domla$ of a given homothetic domain $\omla$, such that $\omla \subset \omlabar \subset \Om$:
\begin{align}
%\lscalproda{\bullet}{\circ}_{u, \domla} & = \intdola \Tr\big[\defo(\bullet) \: \K \: \defo(\circ)\big] \dS \quad \text{and} \quad \lnorm{\bullet}_{u, \domla} = \left( \intdola \Tr\big[\defo(\bullet) \: \K \: \defo(\bullet)\big] \dS \right)^{1/2}; \label{normudOm} \\
\lscalprodp{\bullet}{\circ}_{u, \domla} & = \intdola \Tr\big[\defo(\bullet) \: \K \: \defo(\circ)\big] \: \vu_{\labar} \cdot \und{n} \dS \quad \text{and} \quad \labs{\bullet}_{u, \domla} = \left( \intdola \Tr\big[\defo(\bullet) \: \K \: \defo(\bullet)\big] \: \vu_{\labar} \cdot \und{n} \dS \right)^{1/2}; \label{normudOmbis} \\
%\lscalproda{\bullet}{\circ}_{\cont, \domla} & = \intdola \Tr\big[ \bullet \: \K^{-1} \: \circ\big] \dS \quad \text{and} \quad \lnorm{\bullet}_{\cont, \domla} = \left( \intdola \Tr\big[ \bullet \: \K^{-1} \: \bullet\big] \dS \right)^{1/2}; \label{normcontdOm} \\
\lscalprodp{\bullet}{\circ}_{\cont, \domla} & = \intdola \Tr\big[ \bullet \: \K^{-1} \: \circ\big] \: \vu_{\labar} \cdot \und{n} \dS \quad \text{and} \quad \labs{\bullet}_{\cont, \domla} = \left( \intdola \Tr\big[ \bullet \: \K^{-1} \: \bullet\big] \: \vu_{\labar} \cdot \und{n} \dS \right)^{1/2}. \label{normcontdOmbis}
\end{align}

One can easily prove that the following equalities hold:
\begin{align}
\frac{\diff}{\dla} \left[ \lnorm{\bullet}^2_{u, \omla} \right] & = {\left(\frac{\la}{\displaystyle\labar}\right)}^n \frac{1}{\displaystyle\labar} \labs{\bullet}^2_{u, \domlabar}; \label{eq1:diff} \\
\frac{\diff}{\dla} \left[ \lnorm{\bullet}^2_{\cont, \omla} \right] & = {\left(\frac{\la}{\displaystyle\labar}\right)}^n \frac{1}{\displaystyle\labar} \labs{\bullet}^2_{\cont, \domlabar}, \label{eq2:diff}
\end{align}
where $n$ is defined by \eqref{def1:n}.

\begin{proof}
Starting from definition (\ref{normuOm}) of $\lnorm{\bullet}^2_{u, \omla}$, then using relation (\ref{eq1:linkintolaintdolabar}) and definition (\ref{normudOmbis}) of $\labs{\bullet}^2_{u, \domlabar}$, one gets directly:
\begin{align}
\lnorm{\bullet}^2_{u, \omla} & = \intola \Tr\big[\defo(\bullet) \: \K \: \defo(\bullet)\big] \dO \nonumber\\
& = \int_{\la' = 0}^{\la} \left[ {\left(\frac{\la'}{\displaystyle\labar}\right)}^n \frac{1}{\displaystyle\labar} \intdolabar \Tr\big[ \defo(\bullet) \: \K \: \defo(\bullet)\big] \: \vu_{\labar} \cdot \und{n} \dS \right] \dla' \nonumber\\
& = \int_{\la' = 0}^{\la} \left[ {\left(\frac{\la'}{\displaystyle\labar}\right)}^n \frac{1}{\displaystyle\labar} \labs{\bullet}^2_{u, \domlabar} \right] \dla' \label{eq1:proofdiff}
\end{align}
Eventually, differentiation of (\ref{eq1:proofdiff}) with respect to variable $\la$ completes the proof of relation (\ref{eq1:diff}). Similarly, the derivation of relation (\ref{eq2:diff}) can be proved in a straightforward manner.
\end{proof}

\begin{definition}
Measures $e_{\cre,\la}(\hat{\uu}_h, \hat{\cont}_h)$ and $e_{\cre,\la}(\hat{\tilde{\uu}}_h, \hat{\tilde{\cont}}_h)$ of the non-verification of the constitutive relations related to both reference and adjoint problems in part $\omla \subset \Om$ are defined by:
\begin{equation}\label{eq1:ecrela}
e_{\cre,\la} \equiv e_{\cre,\la}(\hat{\uu}_h, \hat{\cont}_h) = \lnorm{\hat{\cont}_h - \K \: \defo(\hat{\uu}_h)}_{\cont, \omla}
\end{equation}
and
\begin{equation}\label{eq1:ecrelatilde}
\tilde{e}_{\cre,\la} \equiv e_{\cre,\la}(\hat{\tilde{\uu}}_h, \hat{\tilde{\cont}}_h) = \lnorm{\hat{\tilde{\cont}}_h - \K \: \defo(\hat{\tilde{\uu}}_h)}_{\cont, \omla},
\end{equation}
respectively.

Similarly, measures $e_{\cre,\setminus \la}(\hat{\uu}_h, \hat{\cont}_h)$ and $e_{\cre,\setminus \la}(\hat{\tilde{\uu}}_h, \hat{\tilde{\cont}}_h)$ of the non-verification of the constitutive relations related to both reference and adjoint problems in the complementary part $\omminusla$ are defined by:
\begin{equation}\label{eq1:ecresetminusla}
e_{\cre,\setminus \la} \equiv e_{\cre,\setminus \la}(\hat{\uu}_h, \hat{\cont}_h) = \lnorm{\hat{\cont}_h - \K \: \defo(\hat{\uu}_h)}_{\cont, \omminusla}
\end{equation}
and
\begin{equation}\label{eq1:ecresetminuslatilde}
\tilde{e}_{\cre,\setminus \la} \equiv e_{\cre,\setminus \la}(\hat{\tilde{\uu}}_h, \hat{\tilde{\cont}}_h) = \lnorm{\hat{\tilde{\cont}}_h - \K \: \defo(\hat{\tilde{\uu}}_h)}_{\cont, \omminusla},
\end{equation}
respectively.
\end{definition}

%: 4.2
\subsection{General idea}\label{4.2}

First, let us recall that the quantity $q$ to bound for building an upper error bound is (see (\ref{eq1:basicidentity})):
\begin{align}
%q & = \intO \Tr\big[(\cont - \hat{\cont}_h^m) \: \K^{-1} \: (\hat{\tilde{\cont}}_h - \K \: \defo(\hat{\tilde{\uu}}_h)) \big] \dO \nonumber\\
q & = \lscalproda{\cont - \hat{\cont}_h^m}{\hat{\tilde{\cont}}_h - \K \: \defo(\hat{\tilde{\uu}}_h)}_{\cont, \Om}, \label{eq1:q}
\end{align}
where $\hat{\cont}_h^m$ and $\hat{\tilde{\cont}}_h - \K \: \defo(\hat{\tilde{\uu}}_h)$ are given quantities and $\cont$ is the unknown exact stress solution of the reference problem.

Let us consider a subdomain $\omla$ of domain $\Om$, whose complementary part is denoted by $\omminusla$ in the following. Then, $q$ can be split into two parts:
\begin{align}
q = q_{\la} + q_{\setminus \la}, \label{eq2:q}
\end{align}
where
\begin{align}
%q_{\la} & = \intola \Tr\big[(\cont - \hat{\cont}_h^m) \: \K^{-1} \: (\hat{\tilde{\cont}}_h - \K \: \defo(\hat{\tilde{\uu}}_h)) \big] \dO \nonumber\\
q_{\la} & = \lscalproda{\cont - \hat{\cont}_h^m}{\hat{\tilde{\cont}}_h - \K \: \defo(\hat{\tilde{\uu}}_h)}_{\cont, \omla} \label{eq1:qomla}
\end{align}
and
\begin{align}
%q_{\setminus \la} & = \intominusla \Tr\big[(\cont - \hat{\cont}_h^m) \: \K^{-1} \: (\hat{\tilde{\cont}}_h - \K \: \defo(\hat{\tilde{\uu}}_h)) \big] \dO \nonumber\\
q_{\setminus \la} & = \lscalproda{\cont - \hat{\cont}_h^m}{\hat{\tilde{\cont}}_h - \K \: \defo(\hat{\tilde{\uu}}_h)}_{\cont, \omminusla}. \label{eq1:qomminusla}
\end{align}

If quantity $\hat{\tilde{\cont}}_h - \K \: \defo(\hat{\tilde{\uu}}_h)$ is concentrated over $\omla$, \ie if subdomain $\omla$ surrounds the zone of interest $\om$, part $q_{\setminus \la}$ can be merely bounded as follows:
\begin{align}
\labs{q_{\setminus \la}} %& \leqslant \lnorm{\cont - \hat{\cont}_h^m}_{\cont, \omminusla} \lnorm{\hat{\tilde{\cont}}_h - \K \: \defo(\hat{\tilde{\uu}}_h)}_{\cont, \omminusla}, \nonumber\\
& \leqslant \lnorm{\cont - \hat{\cont}_h^m}_{\cont, \omminusla} \tilde{e}_{\cre,\setminus \la}, \label{ineq1:qomminusla}
\end{align}
$\tilde{e}_{\cre,\setminus \la}$ being a relatively small computable term. It follows that the main contribution to the error comes from $q_{\la}$. Consequently, quantity $q_{\la}$ has to be correctly bounded.

In order to derive accurate bounds for part $q_{\la}$, the discretization error $\und{e}_h = \uu - \hat{\uu}_h$ on $\omla$ is split into:
\begin{itemize}
\item a local error, denoted $\uu_1$;
\item a pollution error, denoted $\uu_2$,
\end{itemize}
which are solutions of the following two local problems defined on $\omla$, referred to as $(\Pone)$ and $(\Ptwo)$, respectively:

\begin{enumerate}
\item[$\bullet$]
Problem $(\Pone)$ consists of searching $(\uu_1, \cont_1) \in \Ucb \times \Scb$ such that:
\begin{itemize}
\item[$\circ$] $\uu_1 = \und{0} \quad \text{on} \ \domla$ \\
\item[$\circ$] $\und{\diver}(\cont_1) = \und{\diver}(\K \: \defo(\uu - \hat{\uu}_h))$ \\
\item[$\circ$] $\cont_1 = \K \: \defo(\uu_1)$
\end{itemize}

\item[$\bullet$]
Problem $(\Ptwo)$ consists of searching $(\uu_2, \cont_2) \in \Ucb \times \Scb$ such that:
\begin{itemize}
\item[$\circ$] $\uu_2 = \uu - \hat{\uu}_h \quad \text{on} \ \domla$ \\
\item[$\circ$] $\und{\diver}(\cont_2) = \und{0}$ \\
\item[$\circ$] $\cont_2 = \K \: \defo(\uu_2)$
\end{itemize}
\end{enumerate}

Decomposition $\uu - \hat{\uu}_h = \uu_1 + \uu_2$ is the starting point for deriving the main technical result presented in the next section. It is worthy noticing that there is no need to perform the extra-resolutions of local problems $(\Pone)$ and $(\Ptwo)$ to get the main technical and final bounding results (\ref{eq3:fundrelation}) and (\ref{eq2:localerrorbounding}) introduced in \Sects{4.3} and \ref{4.4}, respectively. This point is discussed and proved in \Appendix{} A.

%: 4.3
\subsection{Main technical result}\label{4.3}

Let us consider the space $\Vcb$ of functions satisfying equilibrium conditions:
\begin{align}
\Vcb = \left\{ \vu \in \Ucb \slash \und{\diver}(\K \: \defo(\vu)) = \und{0} \right\} \label{eq1:V},
\end{align}
and let us introduce the Steklov constant, or Steklov eigenvalue, $h$ defined in \cite{Ste02} as:
\begin{align}
h = \max_{\substack{ \vu \in \Vcb }} \Scb_1(\vu) \label{eq1:constanth}
\end{align}
with
\begin{align}
\Scb_1(\vu) = \frac{\lnorm{\K \: {(\vu \otimes \und{n})}_{\sym}}^2_{\cont, \dom_1}}{\lnorm{\vu}^2_{u, \om_1}}, \label{eq1:S1}
\end{align}
where subdomain $\om_1$ denotes the homothetic domain $\omla$ associated to a constant parameter $\la = 1$. Then, for any homothetic domain $\omla \subset \Om$ parametrized by $\la > 0$, one can derive a relation involving the product of constant $h$ and parameter $\la$:
\begin{align}
h \la = \max_{\substack{ \vu \in \Vcb }} \Scb_{\la}(\vu) \label{eq1:constanthla}
\end{align}
with
\begin{align}
\Scb_{\la}(\vu) = \frac{\lnorm{\K \: {(\vu \otimes \und{n})}_{\sym}}^2_{\cont, \domla}}{\lnorm{\vu}^2_{u, \omla}}. \label{eq1:Sla}
\end{align}

\begin{proposition}\label{prop1}
Let $(\omla,\omlabar)$ be a pair of homothetic domains such that $\la \in \intervaloc{0}{\labar}$, \ie $\omla \subset \omlabar$. The following key inequality holds:
\begin{align}
\lnorm{\cont - \hat{\cont}_h}^2_{\cont, \omla} \leqslant \left( \frac{\la}{\labar} \right)^{1/h} \lnorm{\cont - \hat{\cont}_h}^2_{\cont, \omlabar} + \gamma_{\la, \labar}, \label{eq3:fundrelation}
\end{align}
where
\begin{equation}\label{eq1:gamma}
\displaystyle\gamma_{\la, \labar} \equiv \gamma_{\la, \labar}(\hat{\uu}_h, \hat{\cont}_h) = \int_{\la' = \la}^{\labar} \left[ \left( \frac{\la'}{\la} \right)^{-1/h} \frac{1}{h \la'} e^2_{\cre,\la'} \right] \dla'.
\end{equation}
\end{proposition}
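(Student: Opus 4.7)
The plan is to establish a Gronwall-type first-order differential inequality for $F(\la) \equiv \lnorm{\cont - \hat{\cont}_h}^2_{\cont, \omla}$ and then integrate it against the integrating factor $\la^{-1/h}$. The homothetic structure of the family $\{\omla\}$ together with the differentiation identity (\ref{eq2:diff}) expresses $F'(\la)$ as a weighted boundary integral on $\domla$, and the Steklov eigenvalue inequality (\ref{eq1:constanthla}) injects the Saint-Venant decay rate $1/h$ into that bound, while the constitutive residual $\delta = \hat{\cont}_h - \K \: \defo(\hat{\uu}_h)$ produces the source term $e^2_{\cre,\la}$ that eventually accumulates into $\gamma_{\la,\labar}$.

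Concretely, I would first exploit the splitting $\uu - \hat{\uu}_h = \uu_1 + \uu_2$ afforded by the local problems $(\Pone)$ and $(\Ptwo)$, which yields the pointwise decomposition $\cont - \hat{\cont}_h = \K \: \defo(\uu_1) + \K \: \defo(\uu_2) - \delta$ on $\omla$, with $\lnorm{\delta}_{\cont,\omla} = e_{\cre,\la}$. Crucially, $\uu_1$ vanishes on $\domla$ while $\uu_2$ lies in $\Vcb$. When $F'(\la)$ is computed via (\ref{eq2:diff}) as a surface integral, the trace of $\uu_1$ drops out and the dominant contribution involves only the boundary values of $\uu_2$, which is exactly the field whose Steklov quotient is controlled.

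Applying the Steklov bound $\lnorm{\K \: (\uu_2 \otimes \und{n})_{\sym}}^2_{\cont, \domla} \leqslant h \la \: \lnorm{\uu_2}^2_{u, \omla}$ to this boundary term and then relating $\lnorm{\uu_2}^2_{u, \omla}$ back to $F(\la)$ via the triangle inequality (the step at which the $-e^2_{\cre,\la}$ correction enters) leads to the key differential inequality
\begin{align*}
F'(\la) \;\geqslant\; \frac{1}{h \la}\bigl(F(\la) - e^2_{\cre,\la}\bigr).
\end{align*}
Multiplying by $\la^{-1/h}$ rewrites this as $\frac{d}{d\la}\bigl(\la^{-1/h} F(\la)\bigr) \geqslant -\la^{-1/h} \: \frac{e^2_{\cre,\la}}{h \la}$; integrating from $\la$ to $\labar$ and finally multiplying through by $\la^{1/h}$ then yields exactly (\ref{eq3:fundrelation}) with the integrand matching the definition (\ref{eq1:gamma}) of $\gamma_{\la,\labar}$.

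The main obstacle I anticipate is the step that converts the raw boundary integral obtained from (\ref{eq2:diff}) into a quantity controlled by the Steklov ratio. The Steklov functional depends only on the boundary trace of a displacement in $\Vcb$, so one must carefully argue, using $\uu_1|_{\domla} = 0$ and the structure of the split, that the $\uu_1$ and $\delta$ contributions either produce vanishing traces or perturbations of order $e_{\cre,\la}$, and that all the weighting factors $(\la/\labar)^n$ and $1/\labar$ inherited from (\ref{eq2:diff}) and (\ref{eq1:linkintdolaintdolabar}) combine with the Steklov eigenvalue $h$ to give exactly the coefficient $1/(h \la)$ in the differential inequality.
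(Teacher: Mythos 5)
Your overall strategy coincides with the paper's (Appendix~A): the same splitting $\uu - \hat{\uu}_h = \uu_1 + \uu_2$ through $(\Pone)$ and $(\Ptwo)$, the same Steklov constant, the same target differential inequality $F'(\la) \geqslant \frac{1}{h\la}\big(F(\la) - e^2_{\cre,\la}\big)$ for $F(\la) = \lnorm{\cont - \hat{\cont}_h}^2_{\cont,\omla}$, and the same integration against the factor $\la^{-1/h}$; your final Gronwall step is exactly the paper's (which merely keeps a sharper source term $\al_{\la} \leqslant e^2_{\cre,\la}$ before relaxing it).

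The gap sits precisely in the step you flag as the anticipated obstacle, and the mechanism you sketch for it would fail. You propose to compute $F'(\la)$ as a boundary integral via (\ref{eq2:diff}) and to discard the $\uu_1$ contribution because $\uu_1$ has vanishing trace on $\domla$. But $F'(\la)$ is the boundary integral of $\Tr\big[(\cont-\hat{\cont}_h)\,\K^{-1}\,(\cont-\hat{\cont}_h)\big]$, and in your pointwise decomposition $\cont - \hat{\cont}_h = \K\,\defo(\uu_1) + \K\,\defo(\uu_2) - \delta$ what appears on $\domla$ is the \emph{stress} $\K\,\defo(\uu_1)$, which does not vanish there even though the displacement $\uu_1$ does; nor is there a triangle inequality relating $\lnorm{\uu_2}^2_{u,\omla}$ to $F(\la)$ up to an $e^2_{\cre,\la}$ correction. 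The missing idea is to start instead from the expansion of the local CRE,
\begin{align*}
e^2_{\cre,\la} \;=\; \lnorm{\cont - \hat{\cont}_h}^2_{\cont, \omla} + \lnorm{\uu - \hat{\uu}_h}^2_{u, \omla} - 2 \intdola (\cont - \hat{\cont}_h)\,\und{n} \cdot (\uu - \hat{\uu}_h) \dS,
\end{align*}
where the volume cross term has been pushed to the boundary by integration by parts, using that $\cont - \hat{\cont}_h$ is divergence-free. It is in this boundary \emph{cross} term, not in $F'(\la)$, that the condition $\uu_1 = \und{0}$ on $\domla$ is exploited, leaving only $\uu_2 \in \Vcb$; Cauchy--Schwarz and the Steklov bound then control it by $\sqrt{h\la}\,\lnorm{\uu_2}_{u,\omla}\,\lnorm{\cont-\hat{\cont}_h}_{\cont,\domla}$, the orthogonality $\lnorm{\uu - \hat{\uu}_h}^2_{u,\omla} = \lnorm{\uu_1}^2_{u,\omla} + \lnorm{\uu_2}^2_{u,\omla}$ is invoked, and completing the square in $\lnorm{\uu_2}_{u,\omla}$ and dropping the nonnegative remainders yields $e^2_{\cre,\la} \geqslant F(\la) - h\la\,F'(\la)$, which is exactly your differential inequality. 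Without this detour through the CRE identity and the integration by parts, the Steklov constant has nothing to act on and the key inequality does not follow.
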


The proof of \Prop{prop1} is given in \Appendix{} A.

Let us note that, using Prager-Synge's equality (\ref{eq1:sigmahathmecre}), unknown term $\lnorm{\cont - \hat{\cont}_h}_{\cont, \omlabar}$ involved in the right-hand side term of fundamental inequality (\ref{eq3:fundrelation}) in \Prop{prop1} is readily bounded as:
\begin{equation}\label{ineq1:sigmaminussigmahath}
\lnorm{\cont - \hat{\cont}_h}^2_{\cont, \omlabar} \leqslant \left( \lnorm{\cont - \hat{\cont}^m_h}_{\cont, \omlabar} + \lnorm{\hat{\cont}^m_h - \hat{\cont}_h}_{\cont, \omlabar} \right)^2 \leqslant \frac{1}{4} \left( e_{\cre}+ e_{\cre,\labar} \right)^2.
\end{equation}
It follows that fundamental result (\ref{eq3:fundrelation}) can be rewritten in terms of perfectly known quantities as:
\begin{equation}\label{eq4:fundrelation}
\lnorm{\cont - \hat{\cont}_h}^2_{\cont, \omla} \leqslant \left( \frac{\la}{\labar} \right)^{1/h} \frac{1}{4} \left( e_{\cre} + e_{\cre,\labar} \right)^2 + \gamma_{\la, \labar}.
\end{equation}

\begin{remark}
Quantity $\lnorm{\cont - \hat{\cont}_h}_{\cont, \omlabar}$ can be straightforwardly bounded without introducing $\hat{\cont}^m_h$ by simply using $\lnorm{\cont - \hat{\cont}_h}^2_{\cont, \omlabar} \leqslant \lnorm{\cont - \hat{\cont}_h}^2_{\cont, \Om} \leqslant e^2_{\cre}$. Nevertheless, the corresponding bound, namely $e^2_{\cre}$, is less accurate than the one given by (\ref{ineq1:sigmaminussigmahath}), namely $1/4 \left( e_{\cre} + e_{\cre,\labar} \right)^2$.
\end{remark}

%: 4.4
\subsection{Final bounding result}\label{4.4}

\begin{proposition}\label{prop2}
The final improved bounding result reads:
\begin{align}\label{eq2:localerrorbounding}
\labs{\Iex - \Ih - \Ihh - \Ihhh} \leqslant \tilde{e}_{\cre,\la} \: \delta_{\la, \labar} + \frac{1}{2} \: e_{\cre} \: \tilde{e}_{\cre,\setminus \la},
\end{align}
where
\begin{align}
\delta_{\la, \labar} \equiv \delta_{\la, \labar}(\hat{\uu}_h,\hat{\cont}_h) = \left[ \left( \frac{\la}{\labar} \right)^{1/h} \frac{1}{4} \left( e_{\cre} + e_{\cre,\labar} \right)^2 + \gamma_{\la, \labar} \right]^{1/2}
\end{align}
and
\begin{align}
\Ihhh & = \frac{1}{2} \intola \Tr\big[(\hat{\cont}_h - \K \: \defo(\hat{\uu}_h)) \: \K^{-1} \: (\hat{\tilde{\cont}}_h - \K \: \defo(\hat{\tilde{\uu}}_h)) \big] \dO \nonumber\\
& = \frac{1}{2} \lscalproda{\hat{\cont}_h - \K \: \defo(\hat{\uu}_h)}{\hat{\tilde{\cont}}_h - \K \: \defo(\hat{\tilde{\uu}}_h)}_{\cont, \omla} \label{eq1:Ihhh}
\end{align}
are all computable from the calculated approximate solutions of both reference and adjoint problems. $\Ih + \Ihh + \Ihhh$ can be viewed as a new approximate solution of the exact value $\Iex$ of the quantity of interest.
\end{proposition}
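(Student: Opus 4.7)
The plan is to start from the basic identity \eqref{eq1:basicidentity}, namely
$\Iex - \Ih - \Ihh = \lscalproda{\cont - \hat{\cont}_h^m}{\hat{\tilde{\cont}}_h - \K\,\defo(\hat{\tilde{\uu}}_h)}_{\cont,\Om}$,
and split the inner product over $\Om$ into its contribution on $\omla$ and its contribution on $\omminusla$, as already introduced in \Sect{4.2}. That reduces the statement to bounding $q_{\la}$ and $q_{\setminus\la}$ separately and then assembling them via the triangle inequality.

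First I would dispose of $q_{\setminus\la}$. A direct Cauchy-Schwarz with respect to $\lscalproda{\bullet}{\circ}_{\cont,\omminusla}$ gives
$\labs{q_{\setminus\la}} \leqslant \lnorm{\cont - \hat{\cont}_h^m}_{\cont,\omminusla}\,\tilde{e}_{\cre,\setminus\la}$;
monotonicity of the norm under domain inclusion combined with Prager--Synge's equality \eqref{eq1:sigmahathmecre} yields $\lnorm{\cont - \hat{\cont}_h^m}_{\cont,\omminusla} \leqslant \lnorm{\cont - \hat{\cont}_h^m}_{\cont,\Om} = \tfrac{1}{2}e_{\cre}$, which delivers the second term on the right-hand side of \eqref{eq2:localerrorbounding}.

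The core of the argument is the treatment of $q_{\la}$, and this is where $\Ihhh$ is produced. The idea is to replace $\hat{\cont}_h^m$ by $\hat{\cont}_h$ using the algebraic identity $\hat{\cont}_h^m = \hat{\cont}_h - \tfrac{1}{2}(\hat{\cont}_h - \K\,\defo(\hat{\uu}_h))$, so that
$\cont - \hat{\cont}_h^m = (\cont - \hat{\cont}_h) + \tfrac{1}{2}(\hat{\cont}_h - \K\,\defo(\hat{\uu}_h))$.
Substituting in \eqref{eq1:qomla}, the second piece becomes exactly $\Ihhh$ as defined in \eqref{eq1:Ihhh}, so
$q_{\la} - \Ihhh = \lscalproda{\cont - \hat{\cont}_h}{\hat{\tilde{\cont}}_h - \K\,\defo(\hat{\tilde{\uu}}_h)}_{\cont,\omla}$.
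A Cauchy--Schwarz on $\omla$ then gives $\labs{q_{\la} - \Ihhh} \leqslant \lnorm{\cont - \hat{\cont}_h}_{\cont,\omla}\,\tilde{e}_{\cre,\la}$.

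The final step is to invoke Proposition~\ref{prop1} in the form \eqref{eq4:fundrelation}, which bounds $\lnorm{\cont - \hat{\cont}_h}^2_{\cont,\omla}$ by exactly $\delta_{\la,\labar}^2$. This plugs into the previous inequality to yield $\labs{q_{\la} - \Ihhh} \leqslant \tilde{e}_{\cre,\la}\,\delta_{\la,\labar}$; adding the two bounds via the triangle inequality
$\labs{\Iex - \Ih - \Ihh - \Ihhh} \leqslant \labs{q_{\la} - \Ihhh} + \labs{q_{\setminus\la}}$
closes the argument. No real obstacle arises here since the heavy lifting was done in Proposition~\ref{prop1}; the only subtlety is the algebraic manipulation that isolates $\Ihhh$ so that the remaining inner product involves $\cont - \hat{\cont}_h$ (bounded by the homothetic estimate) rather than $\cont - \hat{\cont}_h^m$, which is crucial because \eqref{eq3:fundrelation} is stated for the former.
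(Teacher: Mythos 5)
Your proposal is correct and follows essentially the same route as the paper: the same split of the basic identity into $q_{\la}$ and $q_{\setminus\la}$, the same Prager--Synge bound for the complementary part, the same algebraic isolation of $\Ihhh$ via $\cont - \hat{\cont}_h^m = (\cont - \hat{\cont}_h) + (\hat{\cont}_h - \hat{\cont}_h^m)$, followed by Cauchy--Schwarz on $\omla$ and the homothetic estimate (\ref{eq4:fundrelation}) from \Prop{prop1}. No gaps.
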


\begin{proof}
First, let us address the question of bounding of part $q_{\setminus \la}$. Starting from inequation (\ref{ineq1:qomminusla}) and using Prager-Synge's equality (\ref{eq1:sigmahathmecre}), quantity $q_{\setminus \la}$ can be bounded as follows:
\begin{align}
\labs{q_{\setminus \la}} %& \leqslant \lnorm{\cont - \hat{\cont}_h^m}_{\cont, \Om} \lnorm{\hat{\tilde{\cont}}_h - \K \: \defo(\hat{\tilde{\uu}}_h)}_{\cont, \omminusla} \nonumber\\
& \leqslant \frac{1}{2} \: e_{\cre} \: \tilde{e}_{\cre,\setminus \la}. \label{ineq2:qomminusla}
\end{align}

Second, let us now handle the question of bounding of part $q_{\la}$. Equation (\ref{eq1:qomla}) can be rewritten as follows:
\begin{align}
q_{\la} %& = \intola \Tr\big[(\cont - \hat{\cont}_h) \: \K^{-1} \: (\hat{\tilde{\cont}}_h - \K \: \defo(\hat{\tilde{\uu}}_h)) \big] \dO + \intola \Tr\big[(\hat{\cont}_h - \hat{\cont}_h^m) \: \K^{-1} \: (\hat{\tilde{\cont}}_h - \K \: \defo(\hat{\tilde{\uu}}_h)) \big] \dO \\
& = \lscalproda{\cont - \hat{\cont}_h}{\hat{\tilde{\cont}}_h - \K \: \defo(\hat{\tilde{\uu}}_h)}_{\cont, \omla} + \lscalproda{\hat{\cont}_h - \hat{\cont}_h^m}{\hat{\tilde{\cont}}_h - \K \: \defo(\hat{\tilde{\uu}}_h)}_{\cont, \omla} \\
%& = \intola \Tr\big[(\cont - \hat{\cont}_h) \: \K^{-1} \: (\hat{\tilde{\cont}}_h - \K \: \defo(\hat{\tilde{\uu}}_h)) \big] \dO + \frac{1}{2} \intola \Tr\big[(\hat{\cont}_h - \K \: \defo(\hat{\uu}_h)) \: \K^{-1} \: (\hat{\tilde{\cont}}_h - \K \: \defo(\hat{\tilde{\uu}}_h)) \big] \dO. \\
& = \lscalproda{\cont - \hat{\cont}_h}{\hat{\tilde{\cont}}_h - \K \: \defo(\hat{\tilde{\uu}}_h)}_{\cont, \omla} + \Ihhh, \label{eq2:qomla}
\end{align}
where $\Ihhh$ is a calculable known term defined in (\ref{eq1:Ihhh}).

Applying the Cauchy-Schwarz inequality to (\ref{eq2:qomla}) with respect to scalar product $\lscalproda{\bullet}{\circ}_{\cont, \omla}$, one has:
\begin{align}
\labs{q_{\la} - \Ihhh} & \leqslant \lnorm{\cont - \hat{\cont}_h}_{\cont, \omla} \tilde{e}_{\cre,\la}. \label{ineq1:qomla}
\end{align}

Then, introducing the key inequality (\ref{eq4:fundrelation}) coming from \Prop{prop1} into (\ref{ineq1:qomla}) leads to the following bounding result:
\begin{align}
\labs{q_{\la} - \Ihhh} \leqslant \left[ \left( \frac{\la}{\labar} \right)^{1/h} \frac{1}{4} \left( e_{\cre} + e_{\cre,\labar} \right)^2 + \gamma_{\la, \labar}(\hat{\uu}_h, \hat{\cont}_h) \right]^{1/2} \tilde{e}_{\cre,\la}.\label{ineq2:qomla}
\end{align}

Finally, using both inequalities (\ref{ineq2:qomminusla}) and (\ref{ineq2:qomla}), one gets:
\begin{align}
\labs{q - \Ihhh} \leqslant & \left[ \left( \frac{\la}{\labar} \right)^{1/h} \frac{1}{4} \left( e_{\cre} + e_{\cre,\labar} \right)^2 + \gamma_{\la, \labar} \right]^{1/2} \tilde{e}_{\cre,\la} + \frac{1}{2} \: e_{\cre} \: \tilde{e}_{\cre,\setminus \la}, \label{ineq1:q}
\end{align}
which completes the proof of \Prop{2}.
\end{proof}

Thus, this improved technique provides the following guaranteed lower and upper bounds $(\chiinf,\chisup)$ of $\Iex$:
\begin{align}\label{eq1:improvedlocalerrorbounding}
\chiinf \leqslant \Iex \leqslant \chisup,
\end{align}
with
\begin{align}
\chiinf = & \Ih + \Ihh + \Ihhh - \labs{ \tilde{e}_{\cre,\la} \: \delta_{\la, \labar} + \frac{1}{2} \: e_{\cre} \: \tilde{e}_{\cre,\setminus \la} }; \label{eq1:improvedlocalerrorboundschiinf} \\
\chisup = & \Ih + \Ihh + \Ihhh + \labs{ \tilde{e}_{\cre,\la} \: \delta_{\la, \labar} + \frac{1}{2} \: e_{\cre} \: \tilde{e}_{\cre,\setminus \la} }. \label{eq1:improvedlocalerrorboundschisup}
\end{align}

\begin{remark}
These bounds depend on both parameters $\la$ and $\labar$. In order to get a practical minimizer, one seeks to reduce ratio $\displaystyle\frac{\la}{\labar}$ as much as possible by choosing:
\begin{itemize}
\item the smallest parameter $\la$ such that domain $\omla$ surrounds the zone of interest $\om$;
\item the largest parameter $\labar$ such that domain $\omlabar$ remains a homothetic mapping of $\omla$ (preserving its geometric shape) contained in $\Om$,
\end{itemize}
and leading to sharp error bounds.
\end{remark}

\begin{remark}
The Steklov constant $h$ could be easily computed, $\Scb_1(\vu)$ being a Rayleigh quotient associated with a symmetric eigenproblem. By considering a material with isotropic, homogeneous, linear and elastic behavior, in the two-dimensional (three-dimensional, respectively) case of a unit circle\footnotemark[1] (unit sphere\footnotemark[1], respectively) $\om_1$, it has been shown numerically that the maximum of $\Scb_1$ is reached for $\vu = \OM$, where $O$ is the homothetic center and $M \in \dom_1$; it follows that $\defo(\vu) = \Id$. Let us note that in the two-dimensional case of a unit cracked circle and of a double unit square\footnotemark[2] as well as in the three-dimensional case of a double unit parallelepiped\footnotemark[2], the same maximum eigenfunction for $\Scb_1$ has been obtained numerically.
Analytical expressions of constant $h$ for various shape domains are reported in \Tab{table1:values_constant_h} (see \Appendix{} C). Note that, in the particular shape domains we considered, constant $h$ only depends on Poisson's ratio $\nu$. Besides, lower bounds of this constant for a circular shape domain are given in \cite{Lad78}.
\footnotetext[1]{a unit circle (sphere, respectively) corresponds to a circular (spherical, respectively) domain of radius one.}
\footnotetext[2]{a double unit square (parallelepiped, respectively) corresponds to a squared (parallelepiped, respectively) domain of side length two.}
\end{remark}

%: 5.
\section{Second improved bounding technique}\label{5}

%: 5.1
\subsection{General idea}\label{5.1}

As for the first improved bounding method presented in \Sect{4}, let us consider a subdomain $\omlabar \subset \Om$, with complementary part $\omminuslabar$. Quantity $q$ previously defined in (\ref{eq1:q}) can be decomposed as follows:
\begin{equation}\label{eq3:q}
q = q_{\labar} + q_{\setminus \labar},
\end{equation}
where
\begin{align}
q_{\labar} %& = \intolabar \Tr\big[(\cont - \hat{\cont}_h^m) \: \K^{-1} \: (\hat{\tilde{\cont}}_h - \K \: \defo(\hat{\tilde{\uu}}_h)) \big] \dO \nonumber\\
& = \lscalproda{\cont - \hat{\cont}_h^m}{\hat{\tilde{\cont}}_h - \K \: \defo(\hat{\tilde{\uu}}_h)}_{\cont, \omlabar} \label{eq1:qomlabar}
\end{align}
and
\begin{align}
q_{\setminus \labar} %& = \intominuslabar \Tr\big[(\cont - \hat{\cont}_h^m) \: \K^{-1} \: (\hat{\tilde{\cont}}_h - \K \: \defo(\hat{\tilde{\uu}}_h)) \big] \dO \nonumber\\
& = \lscalproda{\cont - \hat{\cont}_h^m}{\hat{\tilde{\cont}}_h - \K \: \defo(\hat{\tilde{\uu}}_h)}_{\cont, \omminuslabar}. \label{eq1:qomminuslabar}
\end{align}

Similarly to the previous improved technique, quantity $q_{\setminus \labar}$ can be easily bounded as follows:
\begin{align}
\labs{q_{\setminus \labar}} %\leqslant \lnorm{\cont - \hat{\cont}_h^m}_{\cont, \omminuslabar} \lnorm{\hat{\tilde{\cont}}_h - \K \: \defo(\hat{\tilde{\uu}}_h)}_{\cont, \omminuslabar}; \nonumber\\
\leqslant \lnorm{\cont - \hat{\cont}_h^m}_{\cont, \omminuslabar} \tilde{e}_{\cre,\setminus \labar}; \label{ineq1:qomminuslabar}
\end{align}
whereas the bounding of $q_{\labar}$ differs widely. As previously mentioned in \Sect{4.2}, the main contribution to the error derives from $q_{\labar}$.

In order to build sharp bounds for part $q_{\labar}$, let us introduce the following local problem defined on $\omlabar \subset \Om$, referred to as $(\Pexh)$, which consists of searching $(\uu^h_{ex}, \cont^h_{ex}) \in \Ucb \times \Scb$ such that:
\begin{itemize}
\item[$\circ$] $\uu^h_{ex} = \hat{\uu}_h \quad \text{on} \ \domlabar$ \\
\item[$\circ$] $\und{\diver}(\cont^h_{ex}) + \und{f}_d = \und{0}$ \\
\item[$\circ$] $\cont^h_{ex} = \K \: \defo(\uu^h_{ex})$
\end{itemize}
Similarly to local problems $(\Pone)$ and $(\Ptwo)$ previously defined in \Sect{4.2} for the first improved bounding technique, one can mention that the final bounding result (\ref{eq3:localerrorbounding}) introduced in \Sect{5.4} does not require the solution of local problem $(\Pexh)$, as the present improved technique circumvents the need to perform any additional resolution of $(\Pexh)$.

It follows that the discretization error $\und{e}_h = \uu - \hat{\uu}_h$ and $\cont - \hat{\cont}_h$ on $\omlabar$ could be rewritten:
\begin{align}\label{eq2:decomposition}
\uu - \hat{\uu}_h = (\uu - \uu^h_{ex}) + (\uu^h_{ex} - \hat{\uu}_h)
\end{align}
and
\begin{align}\label{eq3:decomposition}
\cont - \hat{\cont}_h = (\cont - \cont^h_{ex}) + (\cont^h_{ex} - \hat{\cont}_h),
\end{align}
respectively.

Then, quantity $q_{\labar}$ can be decomposed into two parts:
\begin{equation}
q_{\labar} = q_{\labar,1} + q_{\labar,2},
\end{equation}
where
\begin{align}
q_{\labar,1} %& = \intolabar \Tr\big[(\cont - \cont^h_{ex}) \: \K^{-1} \: (\hat{\tilde{\cont}}_h - \K \: \defo(\hat{\tilde{\uu}}_h)) \big] \dO \nonumber\\
& = \lscalproda{\cont - \cont^h_{ex}}{\hat{\tilde{\cont}}_h - \K \: \defo(\hat{\tilde{\uu}}_h)}_{\cont, \omlabar} \label{eq1:qomlabar1}
\end{align}
and
\begin{align}
q_{\labar,2} %& = \intolabar \Tr\big[(\cont^h_{ex} - \hat{\cont}_h^m) \: \K^{-1} \: (\hat{\tilde{\cont}}_h - \K \: \defo(\hat{\tilde{\uu}}_h)) \big] \dO \nonumber\\
& = \lscalproda{\cont^h_{ex} - \hat{\cont}_h^m}{\hat{\tilde{\cont}}_h - \K \: \defo(\hat{\tilde{\uu}}_h)}_{\cont, \omlabar}. \label{eq1:qomlabar2}
\end{align}

It will be shown that second part $q_{\labar,2}$ could be easily bounded, while a sharp bound of first part $q_{\labar,1}$ will be derived thanks to Saint-Venant's principle.

%: 5.2
\subsection{Bounding of part $q_{\labar,2}$}\label{5.2}

First, let us address the question of bounding of part $q_{\labar,2}$. Applying the Cauchy-Schwarz inequality to (\ref{eq1:qomlabar2}) with respect to scalar product $\lscalproda{\bullet}{\circ}_{\cont, \omlabar}$ leads to:
\begin{align}
\labs{q_{\labar,2}} %& \leqslant \lnorm{\cont^h_{ex} - \hat{\cont}_h^m}_{\cont, \omlabar} \lnorm{\hat{\tilde{\cont}}_h - \K \: \defo(\hat{\tilde{\uu}}_h)}_{\cont, \omlabar}. \label{ineq1:qomlabar2}
& \leqslant \lnorm{\cont^h_{ex} - \hat{\cont}_h^m}_{\cont, \omlabar} \tilde{e}_{\cre,\labar}. \label{ineq1:qomlabar2}
\end{align}

\begin{proposition}\label{prop3}
Let $(\uu^h_{ex}, \cont^h_{ex}) \in \Ucb \times \Scb$ be the exact solution of problem $(\Pexh)$ defined over $\omlabar$. Given an admissible approximate solution $(\hat{\uu}_h, \hat{\cont}_h)$ of the reference problem, the following equality holds:
\begin{equation}\label{eq1:prop3}
\lnorm{\cont^h_{ex} - \hat{\cont}_h^m}_{\cont, \omlabar} %= \frac{1}{2} \lnorm{\hat{\cont}_h - \K \: \defo(\hat{\uu}_h)}_{\cont, \omlabar} 
= \frac{1}{2} \: e_{\cre,\labar}
\end{equation}
\end{proposition}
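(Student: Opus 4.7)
The statement is a localized Prager--Synge identity on $\omlabar$: in the classical identity (\ref{eq1:sigmahathmecre}), the role of the exact solution of the reference problem on $\Om$ is played here by the exact solution $(\uu^h_{ex}, \cont^h_{ex})$ of $(\Pexh)$ on $\omlabar$. My plan is therefore to mimic the proof of (\ref{eq1:sigmahathmecre}) locally, exploiting the two structural facts built into $(\Pexh)$: the Dirichlet datum forces $\uu^h_{ex} - \hat{\uu}_h = \und{0}$ on $\domlabar$, and since $\hat{\cont}_h$ is statically admissible on all of $\Om$, both $\cont^h_{ex}$ and $\hat{\cont}_h$ balance $\und{f}_d$ on $\omlabar$, so that $\und{\diver}(\cont^h_{ex} - \hat{\cont}_h) = \und{0}$.

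The key algebraic step is the splitting
\[
\cont^h_{ex} - \hat{\cont}^m_h = \frac{1}{2}(\cont^h_{ex} - \hat{\cont}_h) + \frac{1}{2}\K \: \defo(\uu^h_{ex} - \hat{\uu}_h),
\]
obtained by substituting $\cont^h_{ex} = \K \: \defo(\uu^h_{ex})$ into the definition of $\hat{\cont}^m_h$. Squaring in the $\lnorm{\cdot}^2_{\cont, \omlabar}$ norm produces two diagonal terms plus a cross term of the form $\int_{\omlabar} \Tr\big[(\cont^h_{ex} - \hat{\cont}_h) \: \defo(\uu^h_{ex} - \hat{\uu}_h)\big] \dO$. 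The central step is to show this cross term is zero, which I would do by a single integration by parts: the volume contribution vanishes because $\und{\diver}(\cont^h_{ex} - \hat{\cont}_h) = \und{0}$ on $\omlabar$, and the boundary contribution vanishes because $\uu^h_{ex} - \hat{\uu}_h = \und{0}$ on $\domlabar$.

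Finally, the same trick applied to the decomposition $\hat{\cont}_h - \K \: \defo(\hat{\uu}_h) = (\hat{\cont}_h - \cont^h_{ex}) + \K \: \defo(\uu^h_{ex} - \hat{\uu}_h)$ kills an analogous cross term and yields the local Pythagorean identity
\[
e^2_{\cre, \labar} = \lnorm{\cont^h_{ex} - \hat{\cont}_h}^2_{\cont, \omlabar} + \lnorm{\K \: \defo(\uu^h_{ex} - \hat{\uu}_h)}^2_{\cont, \omlabar}.
\]
Substituting this back into the squared expansion of $\cont^h_{ex} - \hat{\cont}^m_h$ gives $\lnorm{\cont^h_{ex} - \hat{\cont}^m_h}^2_{\cont, \omlabar} = \frac{1}{4} e^2_{\cre, \labar}$, and (\ref{eq1:prop3}) follows upon taking square roots. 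I do not anticipate any real obstacle beyond bookkeeping; the only delicate point is verifying that the boundary integration by parts is legitimate, which only requires that the traces of $\uu^h_{ex}$ and $\hat{\uu}_h$ agree in $H^{1/2}(\domlabar)$, a property inherited from $\hat{\uu}_h \in \Ucb$ and the Dirichlet condition built into $(\Pexh)$.
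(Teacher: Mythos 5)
Your proposal is correct and follows essentially the same route as the paper: the paper observes that the restriction of $(\hat{\uu}_h,\hat{\cont}_h)$ to $\omlabar$ is admissible for $(\Pexh)$ and invokes the Prager--Synge hypercircle theorem and equality for that local problem, which is exactly the orthogonality-plus-Pythagoras argument you write out explicitly (the vanishing cross term being the content of the hypercircle property). Your version merely unpacks the cited theorem; there is no substantive difference.
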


Let us note that replacing $\cont^h_{ex}$ by the exact solution $\cont$ of the reference problem in \Prop{prop3} leads generally to a incorrect result except over the whole domain $\Om$, where Prager-Synge's equality (\ref{eq1:sigmahathmecre}) holds.

\begin{proof}
Noticing that the restriction of $(\hat{\uu}_h, \hat{\cont}_h)$ to $\omlabar$ is also an admissible solution of problem $(\Pexh)$ defined over $\omlabar$, the well-known Prager-Synge hypercircle theorem \cite{Pra47} leads to the following bounding inequality:
\begin{align}\label{eq2:PragerSynge}
\lnorm{\uu^h_{ex} - \hat{\uu}_h}^2_{u, \omlabar} \leqslant \lnorm{\uu^h_{ex} - \hat{\uu}_h}^2_{u, \omlabar} + \lnorm{\cont^h_{ex} - \hat{\cont}_h}^2_{\cont,\omlabar} = e^2_{\cre,\labar},
\end{align}
which conveys the guaranteed nature of the CRE $e_{\cre,\labar}$ on $\omlabar$ for problem $(\Pexh)$.

Introducing the average admissible field $\hat{\cont}^m_h$ defined by (\ref{eq1:sigmahathm}) completes the proof of \Prop{prop3}.
\end{proof}

Finally, incorporating result (\ref{eq1:prop3}) of \Prop{prop3} into (\ref{ineq1:qomlabar2}), one gets:
\begin{align}
\labs{q_{\labar,2}} & \leqslant \frac{1}{2} \: e_{\cre,\labar} \: \tilde{e}_{\cre,\labar}. \label{ineq2:qomlabar2}
\end{align}

From now on, in order to get accurate bounds for part $q_{\labar,1}$, let us introduce the main technical result.

%: 5.3
\subsection{Main technical result}\label{5.3}

Let us consider the space $\Vcb$ introduced in (\ref{eq1:V}) and let us define the following dimensionless constant:
\begin{align}
k = \min_{\substack{ \vu \in \Vcb }} \Rcb_{\labar}(\vu) \label{eq1:constantk}
\end{align}
with
\begin{align}
\Rcb_{\labar}(\vu) = \frac{\labs{\vu}^2_{u, \domlabar}}{\lnorm{\vu}^2_{u, \omlabar}}, \label{eq1:Rlabar}
\end{align}
for a given homothetic domain $\omlabar \subset \Om$ associated to parameter $\labar >0$. Then, for any domain $\omla$ homothetic to $\omlabar$ such that $\omla \subset \omlabar \subset \Om$, \ie for any $\la \in \intervaloc{0}{\labar}$, one can derive a relation involving the product of constant $k$ and ratio $\displaystyle\frac{\labar}{\la}$:
\begin{align}
k \frac{\labar}{\la} = \min_{\substack{ \vu \in \Vcb }} \Rcb_{\la}(\vu) \label{eq1:constantklabarla}
\end{align}
with
\begin{align}
\Rcb_{\la}(\vu) = \frac{\labs{\vu}^2_{u, \domla}}{\lnorm{\vu}^2_{u, \omla}}. \label{eq2:Rla}
\end{align}

\begin{remark}
This problem is connected to the description of a 2D or 3D homogeneous domain as an abstract beam for which a semi-group could be defined \cite{Hoc93}. However, we do not know any name for this constant which is strictly positive for star-shaped domains such that $\vu_{\labar} \cdot \und{n} > 0$.
\end{remark}

\begin{proposition}\label{prop4}
Let $(\omla,\omlabar)$ be a pair of homothetic domains such that $\la \in \intervaloc{0}{\labar}$, \ie $\omla \subset \omlabar \subset \Om$. The following key inequality holds:
\begin{align}
\forall \: \vu \in \Vcb, \quad \lnorm{\vu}^2_{u, \omla} \leqslant \left( \frac{\la}{\labar}\right)^k \lnorm{\vu}^2_{u, \omlabar}. \label{eq2:fundrelation2}
\end{align}
\end{proposition}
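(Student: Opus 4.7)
I would prove \Prop{prop4} by a Gronwall-type argument applied to the scalar function $\phi(\mu) = \lnorm{\vu}^2_{u, \omega_\mu}$, with $\vu \in \Vcb$ fixed and the homothetic scale $\mu \in (0,\labar]$ as the running variable, in the same spirit as the proof of \Prop{prop1} carried out in Appendix~A.

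\textbf{Step 1 (derivative identity).} First I would compute $\phi'(\mu)$ starting from the differentiation rule (\ref{eq1:diff}). Its right-hand side involves the boundary norm on $\partial \omlabar$ pulled back from $\partial \omega_\mu$, but via the change-of-variables relation (\ref{eq1:linkintdolaintdolabar}) together with the homothetic rescaling of the weight $\vu_\labar \cdot \und{n}$ built into the definition (\ref{normudOmbis}) of the boundary norm, the factor $(\mu/\labar)^n$ from the Jacobian cancels against a matching $(\labar/\mu)^n$ coming from the rescaling, yielding the clean identity
\begin{equation*}
\phi'(\mu) \;=\; \frac{1}{\labar}\,|\vu|^2_{u, \partial \omega_\mu}.
\end{equation*}

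\textbf{Step 2 (Rayleigh lower bound and integration).} Because $\vu \in \Vcb$, the characterization (\ref{eq1:constantklabarla}) of the minimum of $\Rcb_\mu$ gives the lower bound $|\vu|^2_{u, \partial \omega_\mu} \geq (k\labar/\mu)\,\phi(\mu)$. Substituting into Step~1, the remaining $\labar$ cancels and one obtains the scale-free differential inequality
\begin{equation*}
\phi'(\mu) \;\geq\; \frac{k}{\mu}\,\phi(\mu), \qquad \mu \in (0,\labar].
\end{equation*}
Rewriting this as $\tfrac{d}{d\mu}\bigl[\ln \phi(\mu) - k\ln \mu\bigr] \geq 0$ shows that $\mu \mapsto \phi(\mu)/\mu^k$ is non-decreasing on $(0,\labar]$, so in particular $\phi(\la)/\la^k \leq \phi(\labar)/\labar^k$, which rearranges into the desired inequality (\ref{eq2:fundrelation2}).

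\textbf{Main obstacle.} The delicate point is Step~1: one has to carefully track the powers of $\mu/\labar$ coming from the homothety Jacobians in (\ref{eq1:diff}) and (\ref{eq1:linkintdolaintdolabar}) and verify that they combine exactly with the $\labar$-dependent weight $\vu_\labar \cdot \und{n}$ in (\ref{normudOmbis}) to produce the clean factor $1/\labar$ in front of $|\vu|^2_{u, \partial \omega_\mu}$, so that the $\labar$-dependence disappears from the final ODE. Once this bookkeeping is verified, the definition of $k$ gives Step~2 immediately and the final Gronwall integration is textbook; the degenerate case $\phi \equiv 0$ is handled by unique continuation for the elliptic equation $\und{\diver}(\K \: \defo(\vu)) = \und{0}$ defining $\Vcb$, which forces $\vu \equiv 0$ and makes (\ref{eq2:fundrelation2}) the trivial inequality $0 \leq 0$.
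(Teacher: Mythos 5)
Your proposal is correct and follows essentially the same route as the paper's proof in Appendix~B: the derivative identity obtained by combining (\ref{eq1:diff}) with (\ref{eq1:linkintdolaintdolabar}), the Rayleigh-quotient lower bound from (\ref{eq1:constantklabarla}), and the resulting differential inequality $\phi'(\mu) \geqslant (k/\mu)\,\phi(\mu)$ integrated \`a la Gronwall. The only cosmetic difference is that the paper writes the monotonicity directly as $\frac{\diff}{\dla}\left[ \lnorm{\vu}^2_{u, \omla}\,\la^{-k}\right] \geqslant 0$, which sidesteps the $\phi \equiv 0$ degeneracy you handle separately via the logarithm.
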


The proof of \Prop{prop4} is given in \Appendix{} B.

The bounding of part $q_{\labar,1}$ involves quantity $\uu - \uu^h_{ex}$ (resp. $\cont - \cont^h_{ex}$) of decomposition (\ref{eq2:decomposition}) (resp. (\ref{eq3:decomposition})) of the discretization error. Recalling that $\uu - \uu^h_{ex} \in \Vcb$, applying fundamental result (\ref{eq2:fundrelation2}) of \Prop{prop4} to $\uu - \uu^h_{ex}$ leads to:
\begin{align}
\lnorm{\uu - \uu^h_{ex}}^2_{u, \omla} \leqslant \left( \frac{\la}{\labar} \right)^{k} \lnorm{\uu - \uu^h_{ex}}^2_{u, \omlabar}, \label{eq3:fundrelation2}
\end{align}
or, equivalently:
\begin{align}
\lnorm{\cont - \cont^h_{ex}}^2_{\cont, \omla} \leqslant \left( \frac{\la}{\labar} \right)^{k} \lnorm{\cont - \cont^h_{ex}}^2_{\cont, \omlabar}. \label{eq4:fundrelation2}
\end{align}

%: 5.4
\subsection{Final bounding result}\label{5.4}

\begin{proposition}\label{prop5}
The final improved bounding result reads:
\begin{align}\label{eq3:localerrorbounding}
\labs{\Iex - \Ih - \Ihh} \leqslant \frac{1}{2} \left[ e_{\cre} \left[ \tilde{\theta}^2_{\labar} + \tilde{e}^2_{\cre,\setminus \labar} \right]^{1/2} + e_{\cre,\labar} \left[ \tilde{\theta}_{\labar} + \tilde{e}_{\cre,\labar} \right] \right],
\end{align}
where
\begin{align}\label{eq1:theta}
\tilde{\theta}_{\labar} \equiv \tilde{\theta}_{\labar}(\hat{\tilde{\uu}}_h,\hat{\tilde{\cont}}_h) = \labs{\hat{\tilde{\cont}}_h - \K \: \defo(\hat{\tilde{\uu}}_h)}_{\cont, \domlabar} \frac{2 \: \displaystyle k^{1/2}}{k+n+1}
\end{align}
and
\begin{align}
\tilde{e}^2_{\cre,\setminus \labar} = \tilde{e}^2_{\cre} - \tilde{e}^2_{\cre,\labar}
\end{align}
are fully calculable from the calculated approximate solution of adjoint problem.
\end{proposition}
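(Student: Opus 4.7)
The plan is to follow the decomposition $q = q_{\labar,1} + q_{\labar,2} + q_{\setminus\labar}$ established in \Sect{5.1}, bound each of the three pieces separately, and then recombine them via a two-component Cauchy-Schwarz together with Prager-Synge's equality (\ref{eq1:sigmahathmecre}) so that the two contributions $\lnorm{\cont - \hat{\cont}^m_h}_{\cont,\omlabar}$ and $\lnorm{\cont - \hat{\cont}^m_h}_{\cont,\omminuslabar}$ collapse into $\lnorm{\cont - \hat{\cont}^m_h}_{\cont,\Om} = \frac{1}{2} e_{\cre}$. Two of the three sub-bounds are essentially free: inequality (\ref{ineq2:qomlabar2}) already furnishes $\labs{q_{\labar,2}} \leqslant \frac{1}{2} e_{\cre,\labar}\,\tilde{e}_{\cre,\labar}$, and a direct Cauchy-Schwarz on (\ref{eq1:qomminuslabar}) yields $\labs{q_{\setminus\labar}} \leqslant \lnorm{\cont - \hat{\cont}^m_h}_{\cont,\omminuslabar}\,\tilde{e}_{\cre,\setminus\labar}$.

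The technical heart of the argument is to establish
\begin{equation*}
\labs{q_{\labar,1}} \leqslant \lnorm{\cont - \cont^h_{ex}}_{\cont,\omlabar}\,\tilde{\theta}_{\labar},
\end{equation*}
\ie that the adjoint stress-defect $s = \hat{\tilde{\cont}}_h - \K\,\defo(\hat{\tilde{\uu}}_h)$ enters this piece only through its outer boundary norm $\labs{s}_{\cont,\domlabar}$. Setting $r = \cont - \cont^h_{ex}$, I would rewrite $q_{\labar,1} = \intolabar \Tr[r\,\K^{-1}\,s]\,\dO$ as a layered integral $\int_0^{\labar}\int_{\domla}\Tr[r\,\K^{-1}\,s]\,\dS\,\dla$ via (\ref{eq1:linkintolaintolaprime})--(\ref{eq1:diffintola}) and apply Cauchy-Schwarz slice by slice. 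Since $\cont$ and $\cont^h_{ex}$ equilibrate the same body force on $\omlabar$, the stress $r$ lies in $\Vcb$, so the Saint-Venant decay $\lnorm{r}^2_{\cont,\omla} \leqslant (\la/\labar)^k \lnorm{r}^2_{\cont,\omlabar}$ from \Prop{prop4}, combined with the derivative formula (\ref{eq2:diff}), controls the $r$-factor, while the homothety scaling (\ref{eq1:linkintdolaintdolabar}) transports the $s$-factor from $\domla$ back to $\domlabar$. A second weighted Cauchy-Schwarz in the radial variable $\la$ then reduces the outer integral to the scalar quantity $\int_0^{\labar}(\la/\labar)^{(k+n-1)/2}(1/\labar)\,\dla = 2/(k+n+1)$, which together with the Saint-Venant factor $(\la/\labar)^{k/2}$ produces precisely the constant $2 k^{1/2}/(k+n+1)$ that defines $\tilde{\theta}_{\labar}$ in (\ref{eq1:theta}).

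Once this boundary-norm estimate is in hand, the triangle inequality and \Prop{prop3} yield $\lnorm{\cont - \cont^h_{ex}}_{\cont,\omlabar} \leqslant \lnorm{\cont - \hat{\cont}^m_h}_{\cont,\omlabar} + \frac{1}{2} e_{\cre,\labar}$. Grouping the resulting $\lnorm{\cont - \hat{\cont}^m_h}_{\cont,\omlabar}\,\tilde{\theta}_{\labar}$ with $\lnorm{\cont - \hat{\cont}^m_h}_{\cont,\omminuslabar}\,\tilde{e}_{\cre,\setminus\labar}$ under a two-component Cauchy-Schwarz and applying Prager-Synge produces $\frac{1}{2} e_{\cre}\,[\tilde{\theta}^2_{\labar} + \tilde{e}^2_{\cre,\setminus\labar}]^{1/2}$; adding the leftover $\frac{1}{2} e_{\cre,\labar}\,\tilde{\theta}_{\labar}$ from the triangle split and the $\frac{1}{2} e_{\cre,\labar}\,\tilde{e}_{\cre,\labar}$ bound on $q_{\labar,2}$ assembles the right-hand side of (\ref{eq3:localerrorbounding}). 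I expect the main obstacle to be the bookkeeping in the layered-integration step: matching the homothety Jacobian $(\la/\labar)^n$ against the Saint-Venant decay exponent $k$ and the derivative formula (\ref{eq2:diff}) so that the precise constant $2k^{1/2}/(k+n+1)$ emerges, since everything else is a rearrangement of tools already stated.
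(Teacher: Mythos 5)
Your proposal is correct and follows essentially the same route as the paper: the same three-way split $q = q_{\labar,1}+q_{\labar,2}+q_{\setminus\labar}$, the same layered-integration-plus-Saint-Venant (\Prop{prop4}) treatment of $q_{\labar,1}$ yielding $\labs{q_{\labar,1}}\leqslant\lnorm{\cont-\cont^h_{ex}}_{\cont,\omlabar}\tilde{\theta}_{\labar}$, and the same final regrouping under Prager--Synge. The only differences are cosmetic: you phrase the two optimization steps as weighted Cauchy--Schwarz inequalities where the paper uses Young's inequality with an optimized multiplier ($\mu(\la)$, resp.\ $\nu$) -- these give identical constants, including $2k^{1/2}/(k+n+1)$ -- and your direct triangle inequality combined with \Prop{prop3} replaces the paper's detour through \Cor{cor1}, landing on the same bound $\lnorm{\cont-\cont^h_{ex}}_{\cont,\omlabar}\leqslant\lnorm{\cont-\hat{\cont}^m_h}_{\cont,\omlabar}+\frac{1}{2}e_{\cre,\labar}$ (just note explicitly that the condition $k\geqslant n+1$ is needed for your radial weight to be decreasing, as the paper records in (\ref{eq1:conditionk})).
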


\begin{proof}
Let us first handle the question of bounding of part $q_{\labar,1}$, which is actually the key point of this new technique. Using relation (\ref{eq1:linkintolaintdolabar}) for domain $\omlabar$ itself and applying the Cauchy-Schwarz inequality with respect to scalar product $\lscalprodp{\bullet}{\circ}_{\cont, \domlabar}$, one has:
\begin{align}
q_{\labar,1} & = \int_{\la = 0}^{\labar} {\left(\frac{\la}{\displaystyle\labar}\right)}^n \frac{1}{\displaystyle\labar} %\underbrace{\intdolabar \Tr\big[ (\cont - \cont^h_{ex}) \: \K^{-1} \: (\hat{\tilde{\cont}}_h - \K \: \defo(\hat{\tilde{\uu}}_h))\big] \: \vu_{\labar} \cdot \und{n} \dS}_{\displaystyle\lscalprodp{\cont - \cont^h_{ex}}{\hat{\tilde{\cont}}_h - \K \: \defo(\hat{\tilde{\uu}}_h)}_{\cont, \domlabar}} \dla \nonumber\\
\lscalprodp{\cont - \cont^h_{ex}}{\hat{\tilde{\cont}}_h - \K \: \defo(\hat{\tilde{\uu}}_h)}_{\cont, \domlabar} \dla
\end{align}
and
\begin{align}
\labs{q_{\labar,1}} & \leqslant \int_{\la = 0}^{\labar} \left[ {\left(\frac{\la}{\displaystyle\labar}\right)}^n \frac{1}{\displaystyle\labar} \labs{\cont - \cont^h_{ex}}_{\cont, \domlabar} \labs{\hat{\tilde{\cont}}_h - \K \: \defo(\hat{\tilde{\uu}}_h)}_{\cont, \domlabar} \right] \dla. \label{ineq2:qomlabar1}
\end{align}
In order to derive an upper bound of (\ref{ineq2:qomlabar1}), let us introduce a function $\mu(\la) > 0$:
\begin{align}
\labs{q_{\labar,1}} & \leqslant \int_{\la = 0}^{\labar} \left[ \sqrt{\mu(\la)} {\left(\frac{\la}{\displaystyle\labar}\right)}^n \frac{1}{\displaystyle\labar} \labs{\cont - \cont^h_{ex}}_{\cont, \domlabar} \frac{1}{\sqrt{\mu(\la)}} \labs{\hat{\tilde{\cont}}_h - \K \: \defo(\hat{\tilde{\uu}}_h)}_{\cont, \domlabar} \right] \dla \nonumber \\
& \leqslant \frac{1}{2} \int_{\la = 0}^{\labar} \left[ \mu(\la) {\left(\frac{\la}{\displaystyle\labar}\right)}^n \frac{1}{\displaystyle\labar} \labs{\cont - \cont^h_{ex}}^2_{\cont, \domlabar} \right] \dla + \frac{1}{2} \int_{\la = 0}^{\labar} \left[ \frac{1}{\mu(\la)} {\left(\frac{\la}{\displaystyle\labar}\right)}^n \frac{1}{\displaystyle\labar} \labs{\hat{\tilde{\cont}}_h - \K \: \defo(\hat{\tilde{\uu}}_h)}^2_{\cont, \domlabar} \right] \dla \label{ineq3:qomlabar1}
\end{align}
Subsequently, applying result (\ref{eq2:diff}) to $\cont - \cont^h_{ex}$ in the left-hand side term of (\ref{ineq3:qomlabar1}) leads to:
\begin{align}
\labs{q_{\labar,1}} & \leqslant \frac{1}{2} \int_{\la = 0}^{\labar} \left[ \mu(\la) \frac{\diff}{\dla} \left[ \lnorm{\cont - \cont^h_{ex}}^2_{\cont, \omla} \right] \right] \dla + \frac{1}{2} \int_{\la = 0}^{\labar} \left[ \frac{1}{\mu(\la)} {\left(\frac{\la}{\displaystyle\labar}\right)}^n \frac{1}{\displaystyle\labar} \labs{\hat{\tilde{\cont}}_h - \K \: \defo(\hat{\tilde{\uu}}_h)}^2_{\cont, \domlabar} \right] \dla. \label{ineq4:qomlabar1}
\end{align}
Integrating the first term of the left-hand side of (\ref{ineq4:qomlabar1}) by parts, one obtains:
\begin{align}
\labs{q_{\labar,1}} \leqslant \frac{1}{2} \mu(\labar) \lnorm{\cont - \cont^h_{ex}}^2_{\cont, \omlabar} & - \frac{1}{2} \int_{\la = 0}^{\labar} \left[ \frac{\diff}{\dla} \left[ \mu(\la) \right] \lnorm{\cont - \cont^h_{ex}}^2_{\cont, \omla} \right] \dla \nonumber\\
& + \frac{1}{2} \int_{\la = 0}^{\labar} \left[ \frac{1}{\mu(\la)} {\left(\frac{\la}{\displaystyle\labar}\right)}^n \frac{1}{\displaystyle\labar} \labs{\hat{\tilde{\cont}}_h - \K \: \defo(\hat{\tilde{\uu}}_h)}^2_{\cont, \domlabar} \right] \dla. \label{ineq5:qomlabar1}
\end{align}

Assuming that $\displaystyle\frac{\diff}{\dla} \left[ \mu(\la) \right] \leqslant 0 \quad \forall \: \la \in \intervaloc{0}{\labar}$, using fundamental result (\ref{eq4:fundrelation2}) coming from \Prop{prop4} leads to:
\begin{align}
\labs{q_{\labar,1}} \leqslant \frac{1}{2} \mu(\labar) \lnorm{\cont - \cont^h_{ex}}^2_{\cont, \omlabar} & - \frac{1}{2} \lnorm{\cont - \cont^h_{ex}}^2_{\cont, \omlabar} \int_{\la = 0}^{\labar} \left[ \frac{\diff}{\dla} \left[ \mu(\la) \right] \left( \frac{\la}{\labar} \right)^{k} \right] \dla \nonumber\\
& + \frac{1}{2} \int_{\la = 0}^{\labar} \left[ \frac{1}{\mu(\la)} {\left(\frac{\la}{\displaystyle\labar}\right)}^n \frac{1}{\displaystyle\labar} \labs{\hat{\tilde{\cont}}_h - \K \: \defo(\hat{\tilde{\uu}}_h)}^2_{\cont, \domlabar} \right] \dla. \label{ineq6:qomlabar1}
\end{align}

Integrating the second term of the right-hand side of (\ref{ineq6:qomlabar1}) by parts, one obtains:
\begin{align}
\int_{\la = 0}^{\labar} \left[ \frac{\diff}{\dla} \left[ \mu(\la) \right] \left( \frac{\la}{\labar} \right)^{k} \right] \dla = \mu(\labar) - \int_{\la = 0}^{\labar} \left[ \mu(\la) \frac{k}{\la} \left( \frac{\la}{\labar} \right)^{k} \right] \dla;
\end{align}
thus resulting in the following inequality:
\begin{align}
\labs{q_{\labar,1}} \leqslant \frac{1}{2} \int_{\la = 0}^{\labar} \left[ \mu(\la) \frac{k}{\la} \left( \frac{\la}{\labar} \right)^{k} \lnorm{\cont - \cont^h_{ex}}^2_{\cont, \omlabar} + \frac{1}{\mu(\la)} {\left(\frac{\la}{\displaystyle\labar}\right)}^n \frac{1}{\displaystyle\labar} \labs{\hat{\tilde{\cont}}_h - \K \: \defo(\hat{\tilde{\uu}}_h)}^2_{\cont, \domlabar} \right] \dla. \label{ineq7:qomlabar1}
\end{align}

The optimal function $\mu$ is the one which minimizes the right-hand side of (\ref{ineq7:qomlabar1}):
\begin{align}
\mu(\la) = \argmin_{\substack{ \mu^{\ast}(\la) > 0 \\ \frac{\diff}{\dla} \left[ \mu^{\ast}(\la) \right] \leqslant 0 }} \left\{ \frac{1}{2} \int_{\la = 0}^{\labar} \left[ \mu^{\ast}(\la) \frac{k}{\la} \left( \frac{\la}{\labar} \right)^{k} \lnorm{\cont - \cont^h_{ex}}^2_{\cont, \omlabar} + \frac{1}{\mu^{\ast}(\la)} {\left(\frac{\la}{\displaystyle\labar}\right)}^n \frac{1}{\displaystyle\labar} \labs{\hat{\tilde{\cont}}_h - \K \: \defo(\hat{\tilde{\uu}}_h)}^2_{\cont, \domlabar} \right] \dla \right\}.
\end{align}
It reads:
\begin{align}\label{eq1:functionmu}
\mu(\la) = \frac{\labs{\hat{\tilde{\cont}}_h - \K \: \defo(\hat{\tilde{\uu}}_h)}_{\cont, \domlabar}}{\lnorm{\cont - \cont^h_{ex}}_{\cont, \omlabar}} \frac{1}{\displaystyle k^{1/2}} \left( \frac{\labar}{\la} \right)^{(k-n-1)/2},
\end{align}
under the assumption that $\mu$ is a strictly positive, monotonically decreasing function of $\la$ on the interval $\intervaloc{0}{\labar}$, which implies the following condition:
\begin{align}
k \geqslant n+1. \label{eq1:conditionk}
\end{align}

\begin{remark}
Note that, in the two- and three-dimensional cases of circular and spherical domains, respectively, condition (\ref{eq1:conditionk}) holds (see \Rem{rem1:k}).
\end{remark}

Then, replacing function $\mu(\la)$ by expression (\ref{eq1:functionmu}) into (\ref{ineq7:qomlabar1}), one gets:
\begin{align}
\labs{q_{\labar,1}} % & \leqslant \lnorm{\cont - \cont^h_{ex}}_{\cont, \omlabar} \labs{\hat{\tilde{\cont}}_h - \K \: \defo(\hat{\tilde{\uu}}_h)}_{\cont, \domlabar} k^{1/2} \int_{\la = 0}^{\labar} \left[ \frac{1}{\displaystyle\labar} {\left( \frac{\la}{\labar} \right)}^{(k+n-1)/2} \right] \dla \nonumber\\
& \leqslant \lnorm{\cont - \cont^h_{ex}}_{\cont, \omlabar} \tilde{\theta}_{\labar}, \label{ineq8:qomlabar1}
\end{align}
where $\tilde{\theta}_{\labar}$ is defined by (\ref{eq1:theta}).
Finally, collecting bounding results (\ref{ineq8:qomlabar1}) and (\ref{ineq2:qomlabar2}) for $q_{\labar,1}$ and $q_{\labar,2}$, respectively, $q_{\labar}$ can be bounded as follows:
\begin{align}
\labs{q_{\labar}} & \leqslant \lnorm{\cont - \cont^h_{ex}}_{\cont, \omlabar} \tilde{\theta}_{\labar} + \frac{1}{2} \: e_{\cre,\labar} \: \tilde{e}_{\cre,\labar} \nonumber\\
& \leqslant \left[ \lnorm{\cont - \cont^h_{ex} - \frac{1}{2} \left( \hat{\cont}_h - \K \: \defo(\hat{\uu}_h) \right)}_{\cont, \omlabar} + \frac{1}{2} \: e_{\cre,\labar} \right] \tilde{\theta}_{\labar} + \frac{1}{2} \: e_{\cre,\labar} \: \tilde{e}_{\cre,\labar}. \label{ineq1:qomlabar}
\end{align}

\begin{corollary}\label{cor1}
Let $(\uu^h_{ex}, \cont^h_{ex}) \in \Ucb \times \Scb$ be the exact solution of problem $(\Pexh)$ and $(\uu, \cont) \in \Ucb \times \Scb$ the one of the reference problem. Given an admissible approximate solution $(\hat{\uu}_h, \hat{\cont}_h)$ of the reference problem, the following equality holds:
\begin{equation}\label{eq1:cor1}
\lnorm{\cont - \hat{\cont}_h^m}_{\cont, \omlabar} = \lnorm{\cont - \cont^h_{ex} - \frac{1}{2} \left( \hat{\cont}_h - \K \: \defo(\hat{\uu}_h) \right)}_{\cont, \omlabar}
\end{equation}
\end{corollary}

\begin{proof}
Using result (\ref{eq1:prop3}) of \Prop{prop3}, one obtains:
\begin{align}
\lnorm{\cont - \hat{\cont}_h^m}^2_{\cont, \omlabar} %& = \lnorm{\cont - \cont^h_{ex} + \cont^h_{ex} - \hat{\cont}_h^m}^2_{\cont, \omlabar} \nonumber\\
& = \lnorm{\cont - \cont^h_{ex}}^2_{\cont, \omlabar} + \lnorm{\cont^h_{ex} - \hat{\cont}_h^m}^2_{\cont, \omlabar} + 2 \lscalproda{\cont - \cont^h_{ex}}{\cont^h_{ex} - \hat{\cont}_h^m}_{\cont, \omlabar} \nonumber\\
& = \lnorm{\cont - \cont^h_{ex}}^2_{\cont, \omlabar} + \frac{1}{4} \lnorm{\hat{\cont}_h - \K \: \defo(\hat{\uu}_h)}^2_{\cont, \omlabar} + 2 \lscalproda{\cont - \cont^h_{ex}}{\cont^h_{ex} - \hat{\cont}_h^m}_{\cont, \omlabar} \label{eq1:proofcor1}
\end{align}
with
\begin{align}
\lscalproda{\cont - \cont^h_{ex}}{\cont^h_{ex} - \hat{\cont}_h^m}_{\cont, \omlabar} & = \lscalproda{\cont - \cont^h_{ex}}{\cont^h_{ex} - \K \: \defo(\hat{\uu}_h)}_{\cont, \omlabar} - \lscalproda{\cont - \cont^h_{ex}}{\frac{1}{2} \left( \hat{\cont}_h - \K \: \defo(\hat{\uu}_h) \right)}_{\cont, \omlabar}, \nonumber\\
\end{align}
%and, using the Dirichlet boundary conditions over $\domlabar$ satisfied by $\uu^h_{ex}$ and the strong form of equilibrium equations verified by $\cont$ and $\cont^h_{ex}$,
%\begin{align}
%\lscalproda{\cont - \cont^h_{ex}}{\cont^h_{ex} - \K \: \defo(\hat{\uu}_h)}_{\cont, \omlabar} & = \intolabar \Tr\big[(\cont - \cont^h_{ex}) \: \defo(\uu^h_{ex} - \hat{\uu}_h) \big] \dO \nonumber\\
%& = \intdolabar (\cont - \cont^h_{ex}) \: \und{n} \cdot \left( \uu^h_{ex} - \hat{\uu}_h \right) \dS - \intolabar \und{\diver}(\cont - \cont^h_{ex}) \cdot (\uu^h_{ex} - \hat{\uu}_h) \dO \nonumber\\
%& = 0. \nonumber
%\end{align}
and
\begin{align}
\lscalproda{\cont - \cont^h_{ex}}{\cont^h_{ex} - \K \: \defo(\hat{\uu}_h)}_{\cont, \omlabar} & = \intolabar \Tr\big[(\cont - \cont^h_{ex}) \: \defo(\uu^h_{ex} - \hat{\uu}_h) \big] \dO = 0.
\end{align}

Eventually, we end up the proof of \Cor{cor1} with:
\begin{align}
\lnorm{\cont - \hat{\cont}_h^m}^2_{\cont, \omlabar} & = \lnorm{\cont - \cont^h_{ex}}^2_{\cont, \omlabar} + \lnorm{\frac{1}{2} \left( \hat{\cont}_h - \K \: \defo(\hat{\uu}_h) \right)}^2_{\cont, \omlabar} - 2 \lscalproda{\cont - \cont^h_{ex}}{\frac{1}{2} \left( \hat{\cont}_h - \K \: \defo(\hat{\uu}_h) \right)}_{\cont, \omlabar} \nonumber\\
& = \lnorm{\cont - \cont^h_{ex} - \frac{1}{2} \left( \hat{\cont}_h - \K \: \defo(\hat{\uu}_h) \right)}^2_{\cont, \omlabar}. \label{eq2:proofcor1}
\end{align}
\end{proof}

It follows from result (\ref{eq1:cor1}) of Corollary~\ref{cor1}:
\begin{align}
\labs{q_{\labar}} & \leqslant \left[ \lnorm{\cont - \hat{\cont}_h^m}_{\cont, \omlabar} + \frac{1}{2} e_{\cre,\labar} \right] \tilde{\theta}_{\labar} + \frac{1}{2} \: e_{\cre,\labar} \: \tilde{e}_{\cre,\labar}; \label{ineq2:qomlabar}
\end{align}
it can be rewritten in the following form:
\begin{align}
\labs{q_{\labar}} & \leqslant \lnorm{\cont - \hat{\cont}_h^m}_{\cont, \omlabar} \tilde{\theta}_{\labar} + \frac{1}{2} \: e_{\cre,\labar} \left[ \tilde{\theta}_{\labar} + \tilde{e}_{\cre,\labar} \right]; \label{ineq3:qomlabar}
\end{align}
thereby getting back to the bounding (\ref{eq3:q}) of quantity $q$ by using bounding results (\ref{ineq3:qomlabar}) and (\ref{ineq1:qomminuslabar}) for $q_{\labar}$ and $q_{\setminus \labar}$, respectively, one obtains:
\begin{align}
\labs{q} & \leqslant \lnorm{\cont - \hat{\cont}_h^m}_{\cont, \omlabar} \tilde{\theta}_{\labar} + \frac{1}{2} \: e_{\cre,\labar} \left[ \tilde{\theta}_{\labar} + \tilde{e}_{\cre,\labar} \right] + \lnorm{\cont - \hat{\cont}_h^m}_{\cont, \omminuslabar} \tilde{e}_{\cre,\setminus \labar}. \label{ineq2:q}
\end{align}

Let us now introduce a scalar $\nu > 0$ in order to shrewdly regroup $\lnorm{\cont - \hat{\cont}_h^m}_{\cont, \omlabar}$ and $\lnorm{\cont - \hat{\cont}_h^m}_{\cont, \omminuslabar}$, which are such that:
\begin{align}
\lnorm{\cont - \hat{\cont}_h^m}^2_{\cont, \omlabar} + \lnorm{\cont - \hat{\cont}_h^m}^2_{\cont, \omminuslabar} = \lnorm{\cont - \hat{\cont}_h^m}^2_{\cont, \Om} = \frac{1}{4} \: e^2_{\cre};
\end{align}
one has:
\begin{align}
\lnorm{\cont - \hat{\cont}_h^m}_{\cont, \omlabar} \tilde{\theta}_{\labar} + \lnorm{\cont - \hat{\cont}_h^m}_{\cont, \omminuslabar} \tilde{e}_{\cre,\setminus \labar} %& = \displaystyle\frac{1}{\sqrt{\nu}} \lnorm{\cont - \hat{\cont}_h^m}_{\cont, \omlabar} \tilde{\theta}_{\labar} \sqrt{\nu} + \frac{1}{\sqrt{\nu}} \lnorm{\cont - \hat{\cont}_h^m}_{\cont, \omminuslabar} \tilde{e}_{\cre,\setminus \labar} \sqrt{\nu} \nonumber\\
%& \leqslant \frac{1}{2} \underbrace{\left[ \displaystyle\frac{1}{\nu} \lnorm{\cont - \hat{\cont}_h^m}^2_{\cont, \omlabar} + \nu \: \tilde{\theta}^2_{\labar} + \frac{1}{\nu} \lnorm{\cont - \hat{\cont}_h^m}^2_{\cont, \omminuslabar} + \nu \: \tilde{e}^2_{\cre,\setminus \labar} \right]}_{\displaystyle\frac{1}{\nu} \lnorm{\cont - \hat{\cont}_h^m}^2_{\cont, \Om} +\nu\left( \tilde{\theta}^2_{\labar} + \tilde{e}^2_{\cre,\setminus \labar} \right)}. \label{ineq1:determinationnu}
& \leqslant \frac{1}{2} \left[ \displaystyle\frac{1}{\nu} \lnorm{\cont - \hat{\cont}_h^m}^2_{\cont, \Om} +\nu\left( \tilde{\theta}^2_{\labar} + \tilde{e}^2_{\cre,\setminus \labar} \right) \right]. \label{ineq1:determinationnu}
\end{align}
The optimal scalar $\nu$ is the one which minimizes the right-hand side of (\ref{ineq1:determinationnu}):
\begin{align}
\nu = \argmin_{\substack{ \nu^{\ast} > 0 }} \left\{ \frac{1}{\nu^{\ast}} \lnorm{\cont - \hat{\cont}_h^m}^2_{\cont, \Om} +\nu^{\ast}\left( \tilde{\theta}^2_{\labar} + \tilde{e}^2_{\cre,\setminus \labar} \right) \right\}.
\end{align}
It reads:
\begin{align}\label{eq1:scalarnu}
\nu = \frac{\lnorm{\cont - \hat{\cont}_h^m}_{\cont, \Om}}{\sqrt{\tilde{\theta}^2_{\labar} + \tilde{e}^2_{\cre,\setminus \labar}}} = \frac{1}{2} \frac{e_{\cre}}{\sqrt{\tilde{\theta}^2_{\labar} + \tilde{e}^2_{\cre,\setminus \labar}}},
\end{align}
which is a strictly positive scalar.

Then, changing scalar $\nu$ by expression (\ref{eq1:scalarnu}) into inequation (\ref{ineq1:determinationnu}), one gets:
\begin{align}
\labs{q} & \leqslant \frac{1}{2} \: e_{\cre} \left[ \tilde{\theta}^2_{\labar} + \tilde{e}^2_{\cre,\setminus \labar} \right]^{1/2} + \frac{1}{2} \: e_{\cre,\labar} \left[ \tilde{\theta}_{\labar} + \tilde{e}_{\cre,\labar} \right]. \label{ineq3:q}
\end{align}
which proves result (\ref{eq3:localerrorbounding}) of \Prop{prop5}.
\end{proof}

Thus, this improved technique provides the following guaranteed lower and upper bounds $(\zetainf,\zetasup)$ of $\Iex$:
\begin{align}\label{eq2:improvedlocalerrorbounding}
\zetainf \leqslant \Iex \leqslant \zetasup,
\end{align}
with
\begin{align}
\zetainf = \Ih + \Ihh - \frac{1}{2} \labs{ e_{\cre} \left[ \tilde{\theta}^2_{\labar} + \tilde{e}^2_{\cre,\setminus \labar} \right]^{1/2} + e_{\cre,\labar} \left[ \tilde{\theta}_{\labar} + \tilde{e}_{\cre,\labar} \right] }; & \label{eq2:improvedlocalerrorboundszetainf} \\
\zetasup = \Ih + \Ihh + \frac{1}{2} \labs{ e_{\cre} \left[ \tilde{\theta}^2_{\labar} + \tilde{e}^2_{\cre,\setminus \labar} \right]^{1/2} + e_{\cre,\labar} \left[ \tilde{\theta}_{\labar} + \tilde{e}_{\cre,\labar} \right] }. & \label{eq2:improvedlocalerrorboundszetasup}
\end{align}

\begin{remark}
These bounds involve only one parameter $\labar$, while bounds $(\chiinf,\chisup)$ defined by (\ref{eq1:improvedlocalerrorboundschiinf}) and (\ref{eq1:improvedlocalerrorboundschisup}) in \Sect{4.3} depend on two parameters $(\la,\labar)$. In practice, one can determine an optimum value $\labar_{\opt}$ for parameter $\labar$ that minimizes upper bound (\ref{ineq3:q}) of $q$ in order to obtain very precise bounds $(\zetainf,\zetasup)$ of $\Iex$. Besides, practically subdomain $\omlabar$ should recover the zone where the solution of adjoint problem has stiff gradients.
\end{remark}

\begin{remark}\label{rem1:k}
The constant $k$ could be easily computed, $\Rcb_{\labar}(\vu)$ being a Rayleigh quotient associated with a symmetric eigenproblem. In the two-dimensional (three-dimensional, respectively) case of a circle\footnotemark[1] (sphere\footnotemark[1], respectively) $\omlabar$, it has been shown numerically that the minimum of $\Rcb_{\labar}$ is reached for $\vu = \OM$, where $O$ is the homothetic center and $M \in \domlabar$; furthermore, for a circular and spherical geometric shapes, it has been shown in \cite{Hoc92} (see \Appendix{} D); it follows that $\defo(\vu) = \Id$. Let us note that in the two-dimensional case of a cracked circle and of a double square\footnotemark[2], as well as in the three-dimensional case of a double unit parallelepiped\footnotemark[2], the same minimum eigenfunction for $\Rcb_{\labar}$ has been obtained numerically.
\footnotetext[1]{circle (sphere, respectively) $\omlabar$ corresponds to a circular (spherical, respectively) domain of radius $\labar$.}
\footnotetext[2]{double square (parallelepiped, respectively) $\omlabar$ corresponds to a squared (parallelepiped, respectively) domain of side length $2 \labar$.}

For all aforementioned two- (three-, respectively) dimensional cases, constant $k$ is equal to $2$ ($3$, respectively).
%\begin{itemize}
%\item In two dimensions, one has:
%\begin{enumerate}
%\item[$\circ$] in the case of a unit circle:
%\begin{align}
%k_{\text{unit circle}} = \Rcb_{\labar}(\vu) & = \frac{4 (\mu + \lambda) \times 2 \pi \times \labar}{4 (\mu + \lambda) \times \pi \times \labar^2} \nonumber\\
%& = 2. \label{eq1:circulark}
%\end{align}
%\item[$\circ$] in the case of a unit cracked circle and of a double unit square, one can show that constant $k$ is equal to $k_{\text{unit circle}}$.
%\end{enumerate}
%\item In three dimensions, one gets:
%\begin{enumerate}
%\item[$\circ$] in the case of a unit sphere:
%\begin{align}
%k_{\text{unit sphere}} = \Rcb_{\labar}(\vu) & = \frac{(6 \mu + 9 \lambda) \times 4 \pi \times \labar^2}{(6 \mu + 9 \lambda) \times \displaystyle\frac{4}{3} \pi \times \labar^3} \nonumber\\
%& = 3. \label{eq1:sphericalk}
%\end{align}
%\item[$\circ$] in the case of a double unit parallelepiped, one can show that constant $k$ is equal to $k_{\text{unit sphere}}$.
%\end{enumerate}
%\end{itemize}
Note that, in these particular shape domains, constant $k$ does not depend on any material parameter.
\end{remark}

\begin{remark}
In the present work, we restrict to only one subdomain, but the results could be easily extended to several ones. Multiple subdomains could be associated to different zones where the local CRE is relatively high either for adjoint or reference problem.
\end{remark}

%: 6. Numerical results
\section{Numerical results}\label{6}

All numerical experiments have been performed assuming that the material remains isotropic, homogeneous, linear and elastic with Young's modulus $E = 1$ and Poisson's ratio $\nu = 0.3$. Furthermore, the two-dimensional examples are assumed to satisfy the plane-stress approximation. The balance technique used to derive a statically admissible stress field is the element equilibration technique (EET) combined with a $p$-refinement technique consisting of a $p+k$ discretization, $p$ being the FE interpolation degree and $k$ an additional degree equal to $3$ (see principles of the different techniques for constructing admissible stress fields in \cite{Ple11} for more information).

Performances of the proposed bounding techniques are illustrated through a two-dimensional cracked structure, already considered in \cite{Par06,Lad10bis,Ple12}.

The results obtained for classical bounding technique as well as first and second improved variants are presented in terms of the normalized bounds $(\xibarinf,\xibarsup)$, $(\chibarinf,\chibarsup)$, $(\zetabarinf,\zetabarsup)$, respectively, defined by:
\begin{align}
& \xibarinf = \frac{\xiinf}{\Iex} \quad \text{and} \quad \xibarsup = \frac{\xisup}{\Iex}; \\
& \chibarinf = \frac{\chiinf}{\Iex} \quad \text{and} \quad \chibarsup = \frac{\chisup}{\Iex}; \\
& \zetabarinf = \frac{\zetainf}{\Iex} \quad \text{and} \quad \zetabarsup = \frac{\zetasup}{\Iex}.
\end{align}

%: 6.1.
\subsection{Presentation of the 2D cracked structure}\label{6.1}

Let us consider the two-dimensional structure shown in \Fig{fig1:structure_fissuree_2D_geometry_fine_mesh}, which presents two round cavities. A homogeneous Dirichlet boundary condition is imposed to the bigger circular hole, whereas a unit internal constant pressure $p_0$ is applied to the smaller one. Furthermore, the top-left edge is subjected to a unit normal traction force density $\und{t} = +\und{n}$. Besides, a single edge crack emanates from the bottom of the smaller cavity. The two lips of this crack as well as the remaining sides are traction-free boundaries. The FE mesh consists of $7 \, 751$ linear triangular elements and $4 \, 122$ nodes (\ie $8 \, 244$ d.o.f.), see \Fig{fig1:structure_fissuree_2D_geometry_fine_mesh}. It has been adaptively refined in the vicinity of the crack tip. The reference mesh used to compute an ``overkill'' solution and to define a ``quasi-exact" value, denoted $\Iex$ for convenience, of the quantity of interest is built up by dividing each element into $256$ elements; thereby, it is made of $1 \, 984 \, 256$ linear triangular elements and $996 \, 080$ nodes (\ie $1 \, 992 \, 160$ d.o.f.).

\begin{figure}
\centering\includegraphics[scale = 0.4]{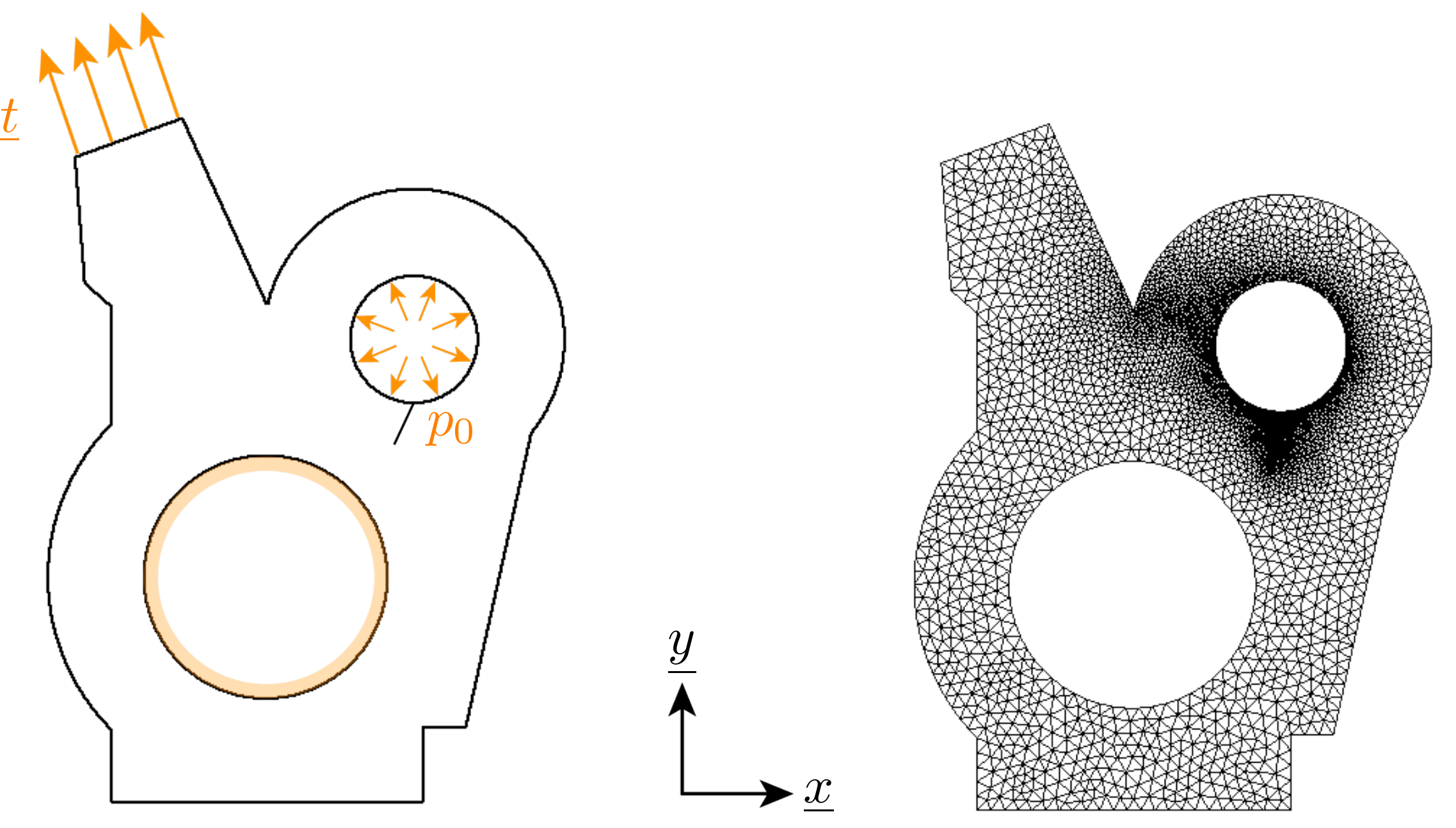}
\caption{Cracked structure model problem (left) and associated finite element mesh (right).}\label{fig1:structure_fissuree_2D_geometry_fine_mesh}
\end{figure}

The quantities of interest being considered in this work are:
\begin{itemize}
\item the average value in a local zone $\om \subset \Om$ of the component $(.)_{xx}$ of the stress field $\cont$:
\begin{equation}\label{eq2:interestquantity}
I_1 = \bar{\sigma}_{xx} = \frac{1}{\labs{\om}} \into \sigma_{xx} \dO,
\end{equation}
where extraction domain $\om$ corresponds to a finite element $E$ of FE mesh $\Mc_h$ illustrated in \Fig{fig1:structure_fissuree_fine_Triangle_EF_mean_sigma_xx_elem_4886_and_pointwise_dep_x_node_661_interest_zone_orange_blue} and $\labs{\om}$ represents its measure;
\item the pointwise value of the component $(.)_x$ of the displacement field $\uu$ at a point $P$:
\begin{equation}\label{eq3:interestquantity}
I_2 = u_x(P),
\end{equation}
where point $P$ coincides with a node of FE mesh $\Mc_h$ illustrated in \Fig{fig1:structure_fissuree_fine_Triangle_EF_mean_sigma_xx_elem_4886_and_pointwise_dep_x_node_661_interest_zone_orange_blue}, the corresponding extraction domain $\om$ being pointwise;
\item the stress intensity factor $\KI$ involved in the finite-energy analytical asymptotic expression of the stress field $\cont$ in the vicinity of the crack tip \cite{Gal06,Pan10} and classically used in crack propagation criteria for linear elastic fracture mechanics (LEFM) problems:
\begin{equation}\label{eq4:interestquantity}
I_3 = \KI.
\end{equation}
\end{itemize}

\begin{figure}
\centering\includegraphics[scale = 0.4]{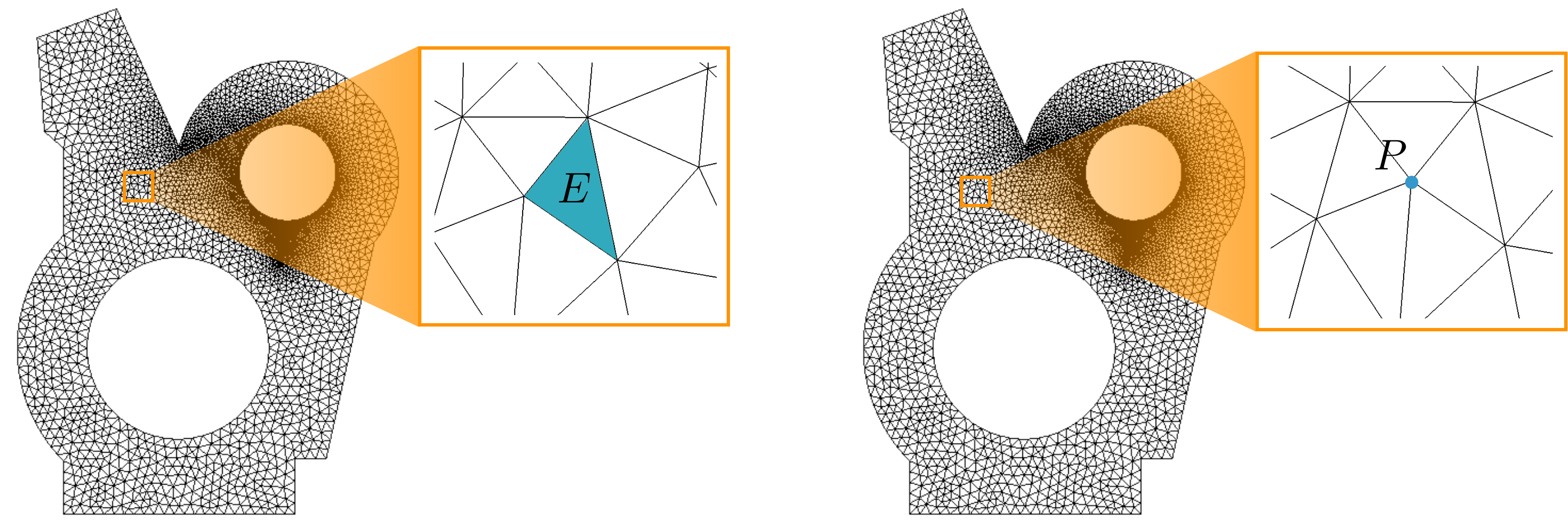}
\caption{Position of element $E$ (left) and point $P$ (right) in the FE mesh defining the zones of interest for local quantities $I_1$ and $I_2$, respectively.}\label{fig1:structure_fissuree_fine_Triangle_EF_mean_sigma_xx_elem_4886_and_pointwise_dep_x_node_661_interest_zone_orange_blue}
\end{figure}

All the local quantities being considered are linear functions of displacement field $\uu$ associated to reference problem. Exact values $\Iex$ and approximate values $\Ih$ obtained for the three considered quantities of interest are reported in \Tab{table1:quantities_interest}.

\begin{table}
\caption{Calculated values of the quantities of interest}
\centering
\tabsize
\begin{tabular}{l c c}
\toprule
Quantity of interest & Exact value \ $\Iex$ & Approximate value \ $\Ih$\\
\midrule
$I_1$ & $0.0347969$ & $0.0303747$ \\
$I_2$ & $-16.2939$ & $-16.1596$ \\
$I_3$ & $2.8974$ & $2.86161$ \\
\bottomrule
\end{tabular}
\label{table1:quantities_interest}
\end{table}

In the case of pointwise quantity of interest $I_2$, for which the loading of the adjoint problem is a pointwise force defined using a Dirac-type function applied to a node of FE mesh $\Mc_h$, a natural regularization would consist in treating the pointwise force as a nodal force $\tilde{\und{f}}_{\Sigma} = \delta(\und{x}_P) \tilde{\und{F}}_{\Sigma}$, where $\tilde{\und{F}}_{\Sigma} = \displaystyle\begin{bmatrix}
1 \\
0
\end{bmatrix}$ and $\und{x}_P$ denotes the position of point $P$. Let us note that this approach can be employed only in the case of pointwise values of the displacement field related to the position of a node $P$ of the FE mesh and leads to a coarse approximate solution of the adjoint problem. Another classical way to handle truly pointwise quantities of interest is to have recourse to the use of mollification, also called mollifying process, replacing the initial quantity of interest by a local weighted average value \cite{Pru99}. A last approach, henceforth known as handbook techniques \cite{Cha08,Cha09,Lad10}, consists in introducing a local enrichment of the solution of the adjoint problem particularly well-suited to pointwise error estimation without performing any regularization of the pointwise quantity of interest being considered. The singularities involved in the loading of the adjoint problem are captured explicitly in a non-intrusive way introducing adapted enrichment functions, also called handbook functions and denoted $(\tilde{\uu}^{hand},\tilde{\cont}^{hand})$, through the PUM (satisfied by the linear FE shape functions) allowing to ensure displacement compatibility. These functions represent local (quasi-)exact solutions of the adjoint problem. In the context of pointwise error estimation, they correspond to the well-known and possibly infinite-energy Green functions describing the singular solutions of the adjoint problem over an infinite (or semi-infinite) domain. Several examples of such functions and a detailed description of handbook techniques can be found in \cite{Cha08,Cha09,Lad10}. The Green functions can be determined analytically using approaches based on strain nuclei combined with the image method \cite{Min36,Min50,Cou53,Gra75,Vij87}; for quantity of interest $I_2$, handbook functions are given in \Fig{fig1:cercle_pointwise_force_x_dep_sigma_handbook}.

\begin{figure}
\centering\includegraphics[scale = 0.4]{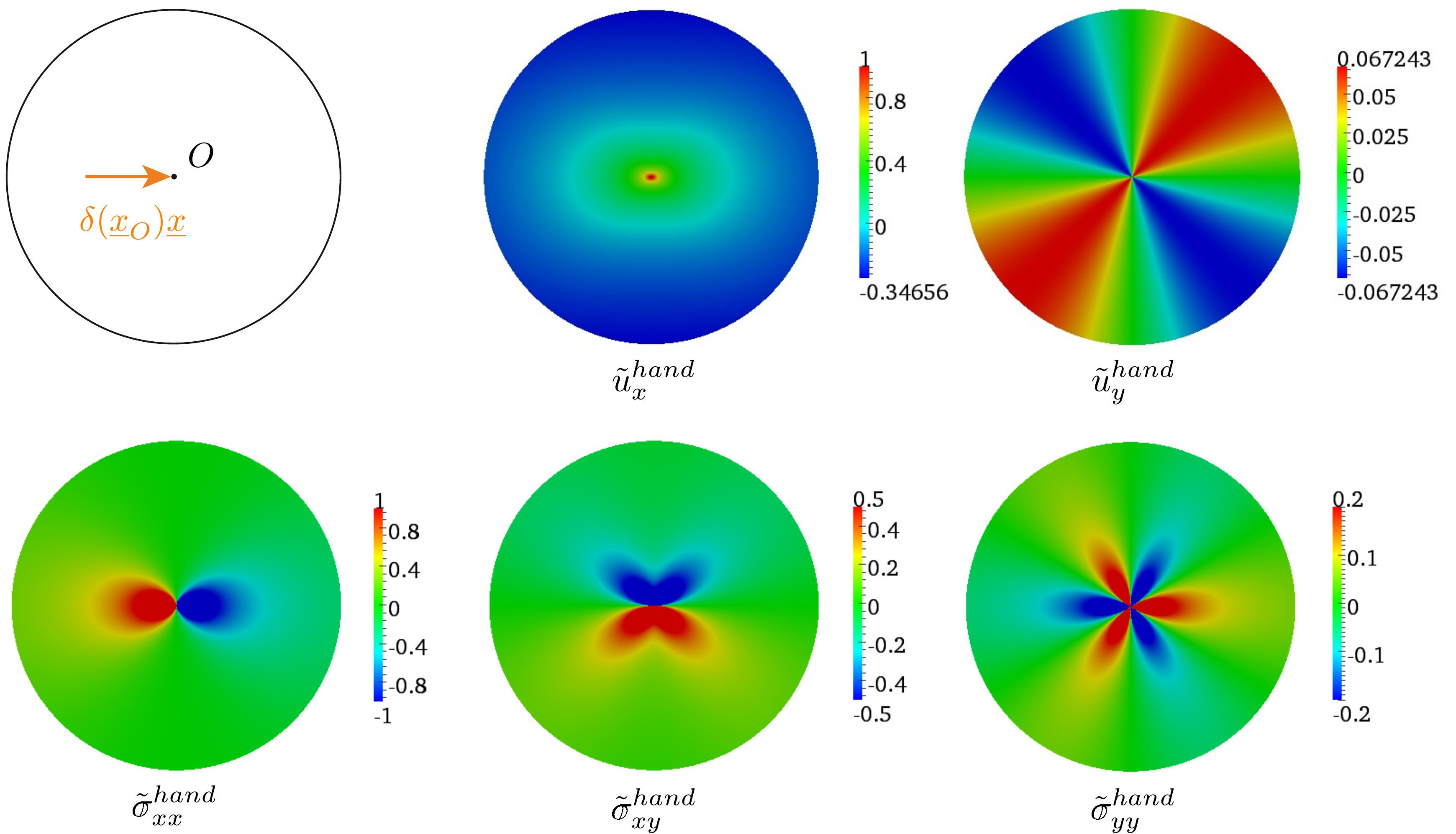}
\caption{Spatial distribution of handbook pair $(\tilde{\uu}^{hand},\tilde{\cont}^{hand})$ associated to a pointwise force loading $\delta(\und{x}_O) \und{x}$ applied to point $O$ over an infinite domain, $\und{x}_O$ being the position of point $O$.}\label{fig1:cercle_pointwise_force_x_dep_sigma_handbook}
\end{figure}

Enrichment is applied via the PUM to a region $\Om^{PUM}$ of domain $\Om$, which is decomposed into two non-overlapping subregions: a zone $\Om_1^{PUM}$ containing the zone of interest $\om$ over which the quantity of interest is defined and a complementary zone $\Om_2^{PUM}$ surrounding $\Om_1^{PUM}$. In the case of quantity of interest $I_2$, an enrichment by the PUM involving only one or two layers of nodes is sufficient to capture the local high gradients of the exact solution $\tilde{u}$ of the adjoint problem; the definition of subregions $\Om_1^{PUM}$ (containing $6$ (resp. $24$) elements and $7$ (resp. $19$) enriched nodes) and $\Om_2^{PUM}$ (containing $18$ (resp. $30$) elements) in region $\Om^{PUM}$ is given in \Fig{fig1:structure_fissuree_fine_Triangle_EF_pointwise_dep_x_node_661_enriched_zone_PUM_12} by considering a single layer (resp. two layers) of nodes involved in the enrichment.

\begin{figure}
\centering\includegraphics[scale = 0.4]{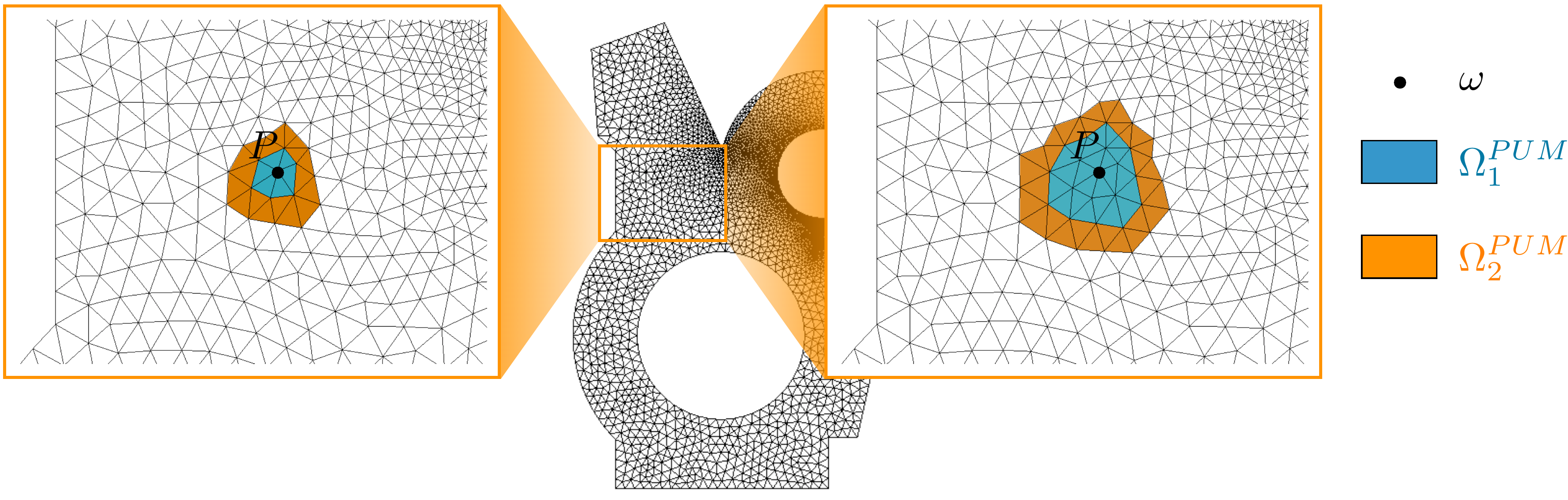}
\caption{Definition of the zone of interest $\om$, the subregions $\Om_1^{PUM}$ and $\Om_2^{PUM}$ in the enriched region $\Om^{PUM}$ for quantity of interest $I_2$; enrichment with the PUM is applied to one layer (left) or two layers (right) of nodes.}\label{fig1:structure_fissuree_fine_Triangle_EF_pointwise_dep_x_node_661_enriched_zone_PUM_12}
\end{figure}

Finally, the global solution $(\tilde{\uu},\tilde{\cont})$ of the adjoint problem is composed of an enrichment part $(\tilde{\uu}^{hand}_{PUM},\tilde{\cont}^{hand}_{PUM})$ introduced a priori either analytically or numerically and a residual part $(\tilde{\uu}^{res},\tilde{\cont}^{res})$ computed a posteriori numerically by using the FEM. The new adjoint problem consists in finding the residual pair $(\tilde{\uu}^{res},\tilde{\cont}^{res})$. The corresponding force vector involved in the equilibrium equations comes down to a traction load $- \tilde{\cont}^{hand} \: \und{n}_{12}$ over $\partial \Om_1^{PUM}$, where $\und{n}_{12}$ is the outgoing normal vector to $\Om_1^{PUM}$, and a prestress $- \tilde{\cont}^{hand}_{PUM}$ over $\Om_2^{PUM}$ by using properties of the handbook functions. Let us note that the loading of the adjoint problem becomes finite and smooth and that a correct and satisfactory approximate solution can be merely obtained by using the same spatial discretization as the one employed for the reference problem. Besides, observing that enrichment part $(\tilde{\uu}^{hand}_{PUM},\tilde{\cont}^{hand}_{PUM})$ satisfies the constitutive relation over $\Om$, the CRE $e_{\cre}(\hat{\tilde{\uu}}_h, \hat{\tilde{\cont}}_h)$ of initial approximate adjoint problem is exactly equal to the CRE $e_{\cre}(\hat{\tilde{\uu}}^{res}_h, \hat{\tilde{\cont}}^{res}_h)$ of residual approximate adjoint problem.

In the case of quantity of interest $I_3$, open-mode stress intensity factor $\KI$ (\ie associated with the Mode-I loading) is defined from an extension of the contour integral method proposed in \cite{Ste76}. The interested reader can refer to \cite{Ste76,Bab84,Gal06,Pan09,Pan10} for a detailed description of this technique. The calculation of stress intensity factor $\KI$ comes down to evaluating a surface integral over an arbitrary crown $\Om_c$ surrounding the crack tip:
\begin{align}\label{eq1:KI}
\KI = \intOc \Tr\big[\left( \K \: \defo(\phi \: \vuI) - \phi \: \contI \right) \: \defo(\uu)\big] \dO - \intOc \left( \contI \: \nabla \phi \right) \cdot \uu \dO,
\end{align}
where $(\vuI,\contI)$ corresponds to a singular (infinite-energy) analytical solution of the elastic problem defined near the crack tip and $\phi$ is an arbitrary continuously derivable scalar function that is equal to $0$ (resp. $1$) along external boundary $\Gamma_e$ (resp. internal boundary $\Gamma_i$) of crown $\Om_c$. One usually considers a circular annulus $\Om_c$ centered around the crack tip and a linear continuous function $\phi$ \cite{Gal06,Pan09,Pan10}, but other polynomial functions $\phi$ could be considered \cite{Bab84}. Here, internal part $\Gamma_i$ and external part $\Gamma_e$ of boundary $\partial \Om_c$ are assumed to be circles centered at the crack tip with radii $R_i = 6$ and $R_e = 8$, respectively; the definition of crown $\Om_c$ is given in \Fig{fig1:structure_fissuree_fine_Triangle_EF_SIF_pos_crack_tip_109_105_R1_6_R2_8_crown}.

\begin{figure}
\centering\includegraphics[scale = 0.4]{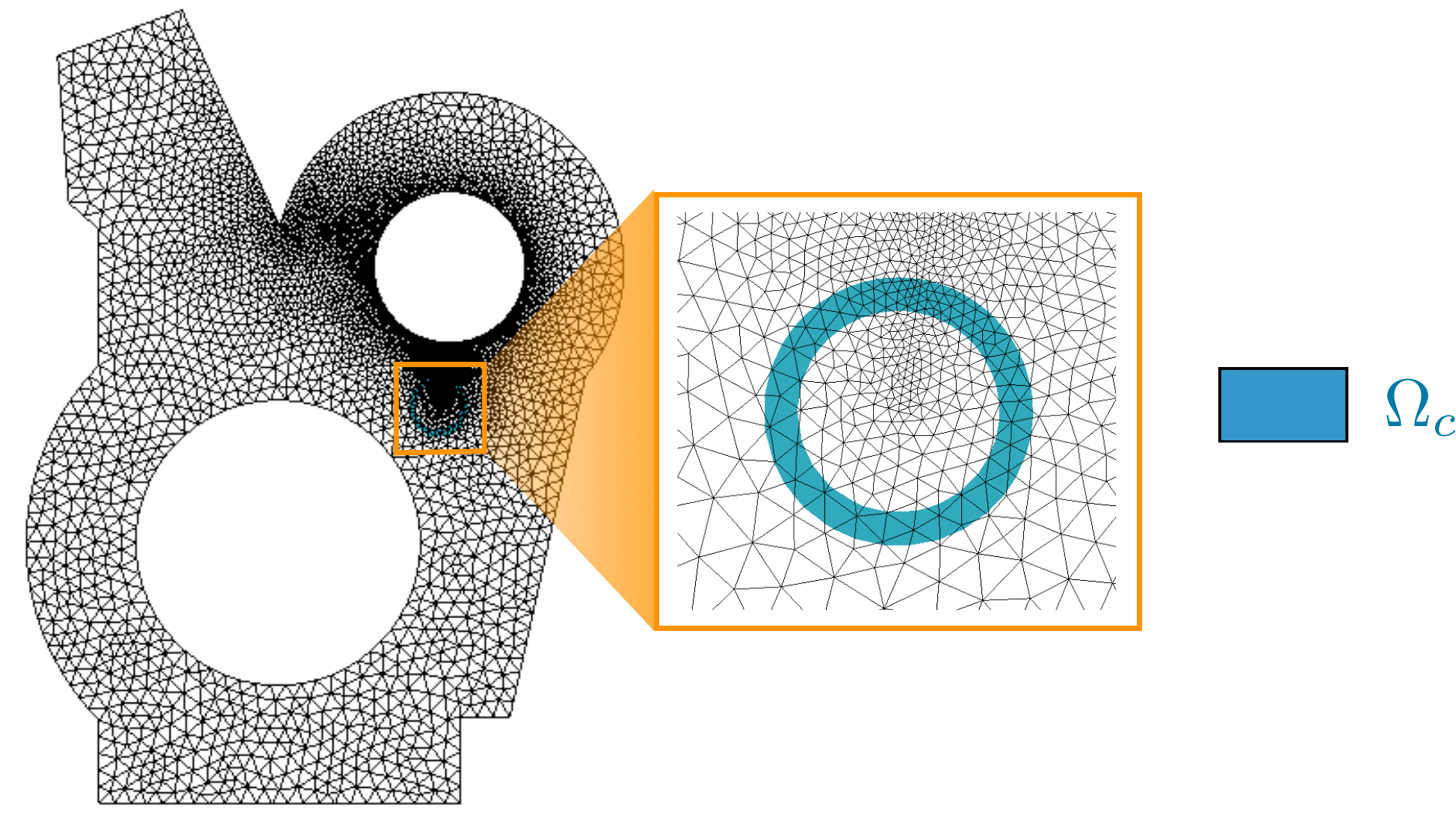}
\caption{Definition of the crown $\Om_c$ for quantity of interest $I_3$.}\label{fig1:structure_fissuree_fine_Triangle_EF_SIF_pos_crack_tip_109_105_R1_6_R2_8_crown}
\end{figure}

Function $\phi$ is chosen to be linear with radius $r$ and is defined as:
\begin{equation}
\begin{cases}
\phi(r) = 1 & \text{if} \ 0 \leqslant r \leqslant R_i\\
\phi(r) = \displaystyle\frac{R_e - r}{R_e - R_i} & \text{if} \ R_i \leqslant r \leqslant R_e\\
\phi(r) = 0 & \text{if} \ r \geqslant R_e.
\end{cases}
\end{equation}
Let us mention that taking a polynomial function $\phi$ with a degree at most compatible with that of the FE analysis is appropriate, since it can be correctly represented in FE framework. The effectiveness and accuracy of the classical bounding technique to derive lower and upper bounds for open- and shear-mode stress intensity factors with a standard FEM have been demonstrated on two-dimensional benchmark problems in \cite{Gal06}.

The loading of the adjoint problems involves the following extractors:
\begin{itemize}
\item a uniform prestress field $\tilde{\cont}_{\Sigma} = \K \: \tilde{\defo}_{\Sigma}$ over element $E$, where $\tilde{\defo}_{\Sigma} = \displaystyle\frac{1}{\labs{E}} 
\begin{bmatrix}
1 & 0 \\
0 & 0
\end{bmatrix}$ and $\labs{E}$ denotes the measure of element $E$, for quantity of interest $I_1$;
\item a traction load $\tilde{\und{F}}_{\Sigma} = - \tilde{\cont}^{hand} \: \und{n}_{12}$ over $\partial \Om_1^{PUM}$ and a prestress field $\tilde{\cont}_{\Sigma} = - \tilde{\cont}^{hand}_{PUM}$ over $\Om_2^{PUM}$ for quantity of interest $I_2$;
\item a prestress field $\tilde{\cont}_{\Sigma} = \K \: \defo(\phi \: \vuI) - \phi \: \contI$ and a body force field $\tilde{\und{f}}_{\Sigma} = - \contI \: \nabla \phi$ over crown $\Om_c$ for quantity of interest $I_3$.
\end{itemize}

In this work, circular shape domains are considered for quantities of interest $I_1$ and $I_2$, while cracked circular shape domains are used for quantity of interest $I_3$, but other geometric shapes could have been investigated, the main technical aspect being the calculation of the constants $h$ and $k$ involved in the first and second improved techniques, respectively. Values of constants $h$ and $k$ have been calculated analytically and computed numerically for different geometric shapes by considering an isotropic, homogeneous, linear and elastic material with Poisson's ratio $\nu = 0.3$. Results are given in \Tab{table1:values_constants}.

\begin{table}
\caption{Values of constants $h$ and $k$ involved in both optimized improvements for different geometric shapes}
\centering
\tabsize
\begin{tabular}{l c c c}
\toprule
Geometric shape & \multicolumn{2}{c}{Constant \ $h$} & Constant \ $k$\\
\midrule
Dim 2 & plane stress assumption & plane strain assumption & \\
\midrule
circle & $0.76923$ & $0.7$ & $2$ \\
cracked circle with $\theta = 0$ & $0.79780$ & $0.72122$ & $2$\\
cracked circle with $\theta = \pi / 6$ & $0.80039$ & $0.72315$ & $2$ \\
cracked circle with $\theta = \pi / 3$ & $0.80351$ & $0.72546$ & $2$ \\
cracked circle with $\theta = \pi / 2$ & $0.80732$ & $0.72829$ & $2$ \\
square & $0.85897$ & $0.76667$ & $2$ \\
\midrule
Dim 3 \\
\midrule
sphere & \multicolumn{2}{c}{$0.53846$} & $3$ \\
parallelepiped & \multicolumn{2}{c}{$0.64103$} & $3$ \\
\bottomrule
\end{tabular}
\label{table1:values_constants}
\end{table}

%: 6.2
\subsection{Average value of a field over a local zone: case of quantity of interest $I_1$}\label{6.2}

Maps of the local contributions to the global error estimates for both reference and adjoint problems are displayed in \Fig{fig1:distribution_estimators_structure_fissuree_2D_mean_sigma_xx_elem_4886}. The adjoint mesh has been slightly refined around the zone of interest $\om$ (corresponding to element $E$), and contains $8 \, 973$ linear triangular elements and $4 \, 733$ nodes (\ie $9 \, 466$ d.o.f.). The main contributions to the error estimate associated to reference problem are by a majority located near the crack tip, while that associated to adjoint problem are concentrated around the zone of interest $\om$. Therefore, the error estimates for both reference and adjoint problems are localized in disjoint regions.

\begin{figure}
\centering\includegraphics[scale = 0.4]{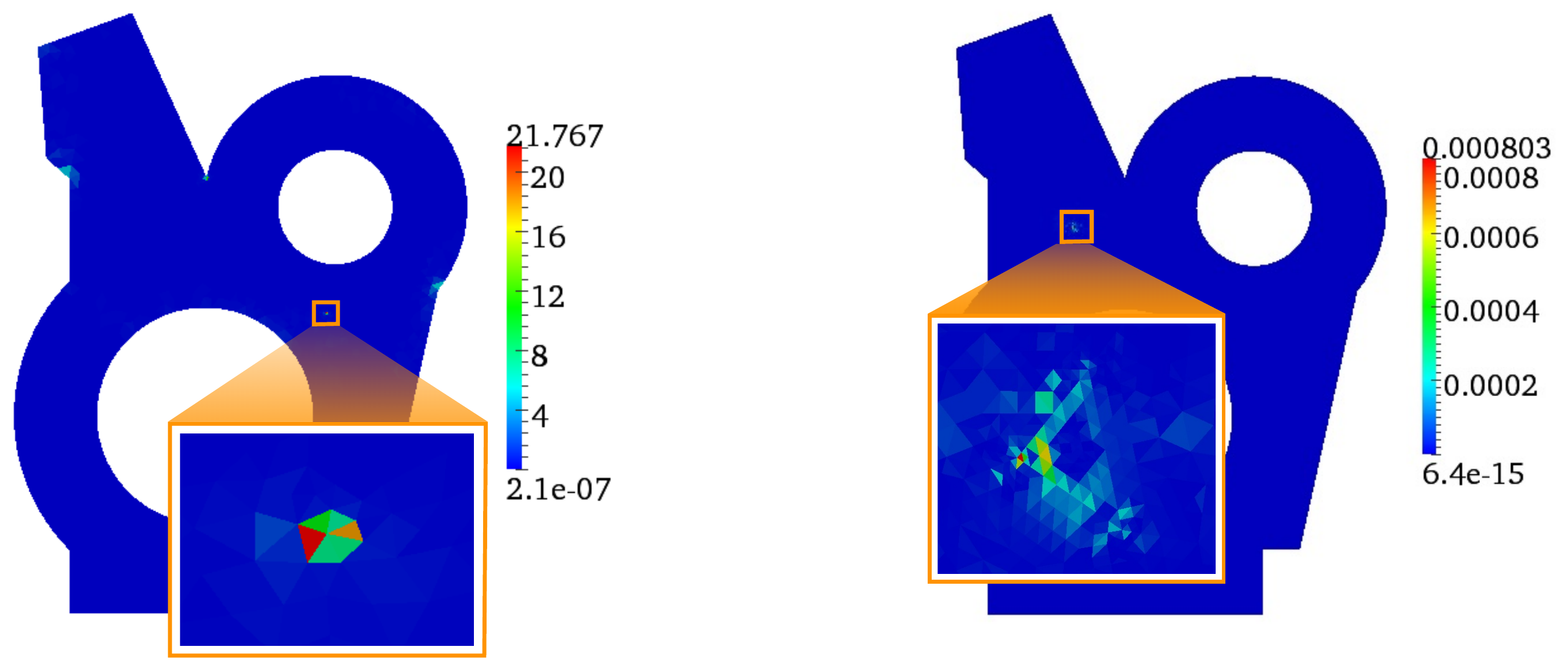}
\caption{Spatial distribution of local contributions to the error estimates associated to reference (left) and adjoint (right) problems related to local quantity $I_1$. Zoom boxes represent the estimated errors in the vicinity of the crack tip for reference problem (left) and around the zone of interest for adjoint problem (right).}\label{fig1:distribution_estimators_structure_fissuree_2D_mean_sigma_xx_elem_4886}
\end{figure}

The values of parameters $\la$ and $\labar$ involved in the first improved technique are set to $2 r$ and $14 r$, respectively, where $r$ corresponds to the radius of the circle circumscribed by element $E$. The value of parameter $\labar_{\opt}$ involved in the second improved technique is set to $9 r$, which enables to achieve the sharpest bounds for quantity of interest $I_1$. The corresponding subdomains $\omla$, $\omlabar$ and $\omlabaropt$ are illustrated in \Fig{fig1:structure_fissuree_fine_Triangle_EF_mean_sigma_xx_elem_4886_homothetic_domains}.

\begin{figure}
\centering\includegraphics[scale = 0.4]{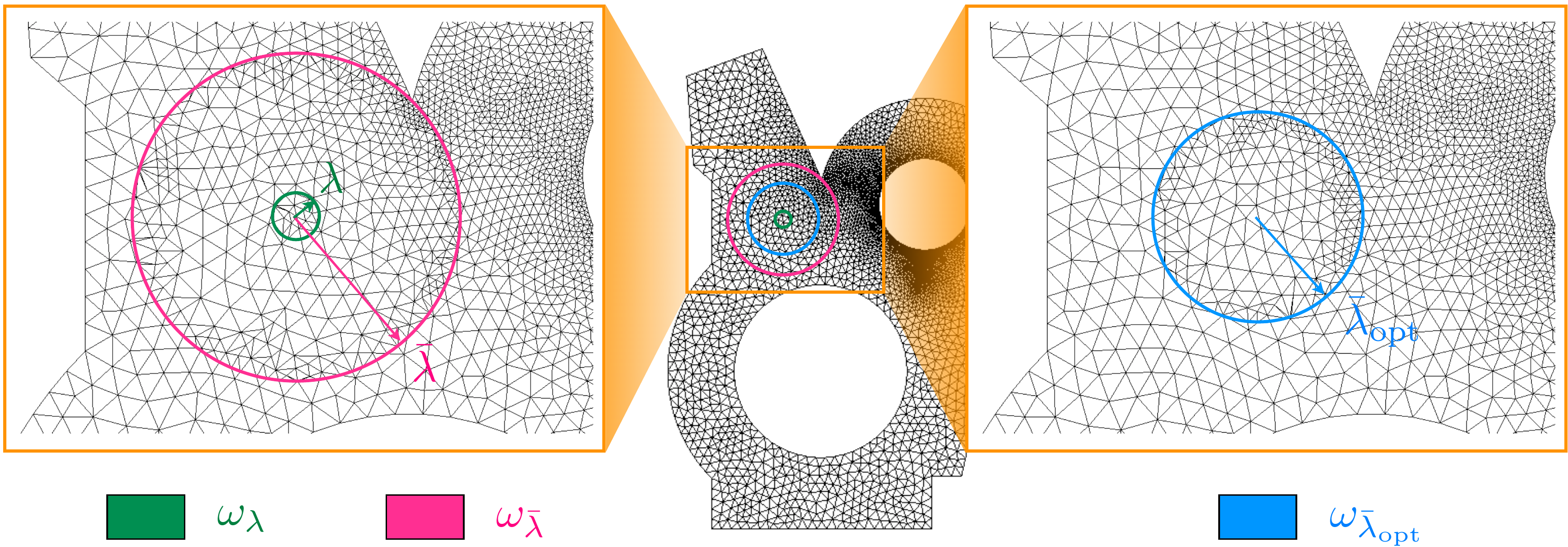}
\caption{Definition of subdomains $\omla$, $\omlabar$ involved in the first improved technique and $\omlabaropt$ involved in the second one for quantity of interest $I_1$.}\label{fig1:structure_fissuree_fine_Triangle_EF_mean_sigma_xx_elem_4886_homothetic_domains}
\end{figure}

\Fig{fig1:normalized_bounds_structure_fissuree_2D_fine_elem_4886_sigma_xx} shows the evolutions of the normalized bounds on $\Iex$ for quantity of interest $I_1$ as functions of the number of elements $\tilde{N}_e$ of adjoint problem for the classical bounding technique as well as the two improved ones. The adjoint mesh has been locally refined near the zone of interest $\om$, since the loading and the contributions to the global error estimate of the adjoint problem are highly localized in this region. One can see a slight improvement in the bounds obtained with the first improved technique compared to the classical one. As regards the second improved technique, a very clear improvement is observed allowing to achieve sharp local error bounds without refining too much the adjoint problem, thus keeping an affordable computing time. Similar results can be obtained for the average values in element $E$ of the other components $(.)_{xy}$ and $(.)_{yy}$ of stress field $\cont$, as well as for the ones of the different components of strain field $\defo$.

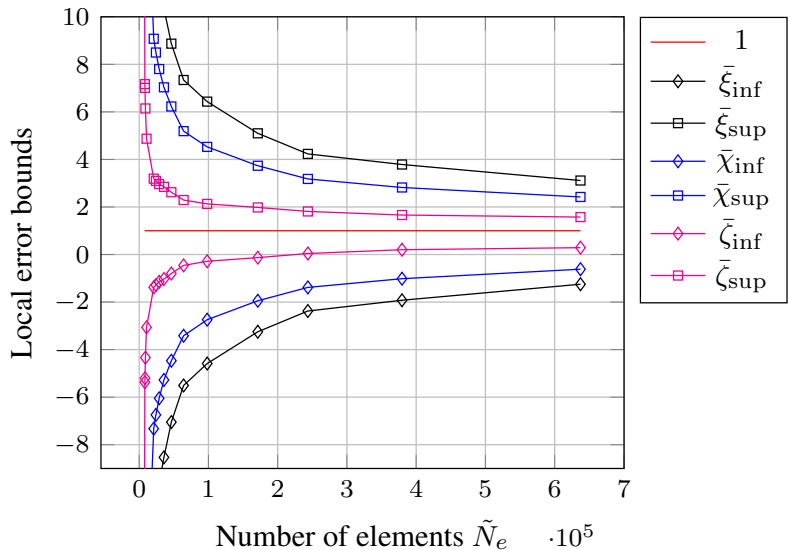
\begin{figure}
\centering
\begin{tikzpicture}[scale=1.2]%[baseline]
\pgfplotsset{
xlabel near ticks,
ylabel near ticks,
label style={font=\small},
tick label style={font=\footnotesize},
legend style={font=\small},
try min ticks=7
}
\begin{axis}[
	width=0.4\textwidth,
	scale only axis,
	ymin=-9,ymax=10,
	%axis y line*=left,
	%axis on top,
	%xlabel=Nombre d'\'el\'ements $\tilde{N}_e$,
	xlabel=Number of elements $\tilde{N}_e$,
	%ylabel=Bornes d'erreur locale,
	ylabel=Local error bounds,
	grid=major,
	legend pos=outer north east,
	legend entries={
	$1$,
	$\xibarinf$, $\xibarsup$,
	$\chibarinf$, $\chibarsup$,
	$\zetabarinf$, $\zetabarsup$,
	%$(\Ih + \Ihh - \sum_E \frac{e_{\cre,E} \: \tilde{e}_{\cre,E}}{2})/\Iex$, $(\Ih + \Ihh + \sum_E \frac{e_{\cre,E} \: \tilde{e}_{\cre,E}}{2})/\Iex$
	}
]
\addplot+[sharp plot,red,solid,mark=none,mark options={red,scale=0.7}] table[x=nb_elem,y expr=\thisrow{I_ex}/\thisrow{I_ex}] {structure_fissuree_2D_fine_elem_4886_sigma_xx.txt};
\addplot+[sharp plot,black,solid,mark=diamond,mark options={black,scale=1.0}] table[x=nb_elem,y expr=(\thisrow{I_h}+\thisrow{I_hh_w}-(\thisrow{theta_EET_EF}*\thisrow{theta_EET_adjoint})/2)/\thisrow{I_ex}] {structure_fissuree_2D_fine_elem_4886_sigma_xx.txt};
\addplot+[sharp plot,black,solid,mark=square,mark options={black,scale=0.7}] table[x=nb_elem,y expr=(\thisrow{I_h}+\thisrow{I_hh_w}+(\thisrow{theta_EET_EF}*\thisrow{theta_EET_adjoint})/2)/\thisrow{I_ex}] {structure_fissuree_2D_fine_elem_4886_sigma_xx.txt};
\addplot+[sharp plot,blue,solid,mark=diamond,mark options={blue,scale=1.0}] table[x=nb_elem,y expr=(\thisrow{I_h}+\thisrow{I_hh_w}+\thisrow{I_hhh}-\thisrow{xi_la_min_2_max_14})/\thisrow{I_ex}] {structure_fissuree_2D_fine_elem_4886_sigma_xx.txt};
\addplot+[sharp plot,blue,solid,mark=square,mark options={blue,scale=0.7}] table[x=nb_elem,y expr=(\thisrow{I_h}+\thisrow{I_hh_w}+\thisrow{I_hhh}+\thisrow{xi_la_min_2_max_14})/\thisrow{I_ex}] {structure_fissuree_2D_fine_elem_4886_sigma_xx.txt};
\addplot+[sharp plot,magenta,solid,mark=diamond,mark options={magenta,scale=1.0}] table[x=nb_elem,y expr=(\thisrow{I_h}+\thisrow{I_hh_w}-\thisrow{zeta_la_9})/\thisrow{I_ex}] {structure_fissuree_2D_fine_elem_4886_sigma_xx.txt};
\addplot+[sharp plot,magenta,solid,mark=square,mark options={magenta,scale=0.7}] table[x=nb_elem,y expr=(\thisrow{I_h}+\thisrow{I_hh_w}+\thisrow{zeta_la_9})/\thisrow{I_ex}] {structure_fissuree_2D_fine_elem_4886_sigma_xx.txt};
%\addplot+[sharp plot,teal,solid,mark=diamond,mark options={teal,scale=1.0}] table[x=nb_elem,y expr=(\thisrow{I_h}+\thisrow{I_hh_w}-\thisrow{sum})/\thisrow{I_ex}] {structure_fissuree_2D_fine_elem_4886_sigma_xx.txt};
%\addplot+[sharp plot,teal,solid,mark=square,mark options={teal,scale=0.7}] table[x=nb_elem,y expr=(\thisrow{I_h}+\thisrow{I_hh_w}+\thisrow{sum})/\thisrow{I_ex}] {structure_fissuree_2D_fine_elem_4886_sigma_xx.txt};
\end{axis}
\end{tikzpicture}
\caption{Evolutions of the lower and upper normalized bounds of $\Iex$ for local quantity $I_1$, obtained using the classical bounding technique as well as first and second improvements, with respect to the number of elements $\tilde{N}_e$ associated to the discretization of the adjoint problem.}\label{fig1:normalized_bounds_structure_fissuree_2D_fine_elem_4886_sigma_xx}
\end{figure}

\Fig{fig1:normalized_interest_quantities_structure_fissuree_2D_fine_elem_4886_sigma_xx} represents the evolutions of the normalized exact value $\Iex/\Iex$ of local quantity $I_1$, its normalized approximate value $\Ih/\Iex$ obtained through the FEM and its new normalized approximate value $(\Ih + \Ihh)/\Iex$ as functions of the number of elements $\tilde{N}_e$ of adjoint problem. One can see that $\Ih + \Ihh$ corresponds to an approximation of better quality with respect to $\Iex$ compared to $\Ih$.

\begin{figure}
\centering
\begin{tikzpicture}[scale=1.2]%[baseline]
\pgfplotsset{
xlabel near ticks,
ylabel near ticks,
label style={font=\small},
tick label style={font=\footnotesize},
legend style={font=\small},
try min ticks=7
}
\begin{axis}[
	width=0.4\textwidth,
	scale only axis,
	%ymin=-9,ymax=10,
	%axis y line*=left,
	%axis on top,
	%xlabel=Nombre d'\'el\'ements $\tilde{N}_e$,
	xlabel=Number of elements $\tilde{N}_e$,
	%ylabel=Bornes d'erreur locale,
	ylabel=Local error bounds,
	grid=major,
	legend pos=outer north east,
	legend entries={
	$1$,
	$\Ih/\Iex$,
	$(\Ih + \Ihh)/\Iex$,
	}
]
\addplot+[sharp plot,red,solid,mark=none,mark options={red,scale=0.7}] table[x=nb_elem,y expr=\thisrow{I_ex}/\thisrow{I_ex}] {structure_fissuree_2D_fine_elem_4886_sigma_xx.txt};
\addplot+[sharp plot,green!60!black,solid,mark=none,mark options={green!60!black,scale=0.7}] table[x=nb_elem,y expr=\thisrow{I_h}/\thisrow{I_ex}] {structure_fissuree_2D_fine_elem_4886_sigma_xx.txt};
\addplot+[sharp plot,orange,solid,mark=o,mark options={orange,scale=0.7}] table[x=nb_elem,y expr=(\thisrow{I_h}+\thisrow{I_hh_w})/\thisrow{I_ex}] {structure_fissuree_2D_fine_elem_4886_sigma_xx.txt};
\end{axis}
\end{tikzpicture}
\caption{Evolutions of the normalized exact value of local quantity $I_1$, its normalized approximate value $\Ih/\Iex$ and its new normalized approximate value $(\Ih + \Ihh)/\Iex$ with respect to the number of elements $\tilde{N}_e$ associated to the discretization of the adjoint problem.}\label{fig1:normalized_interest_quantities_structure_fissuree_2D_fine_elem_4886_sigma_xx}
\end{figure}
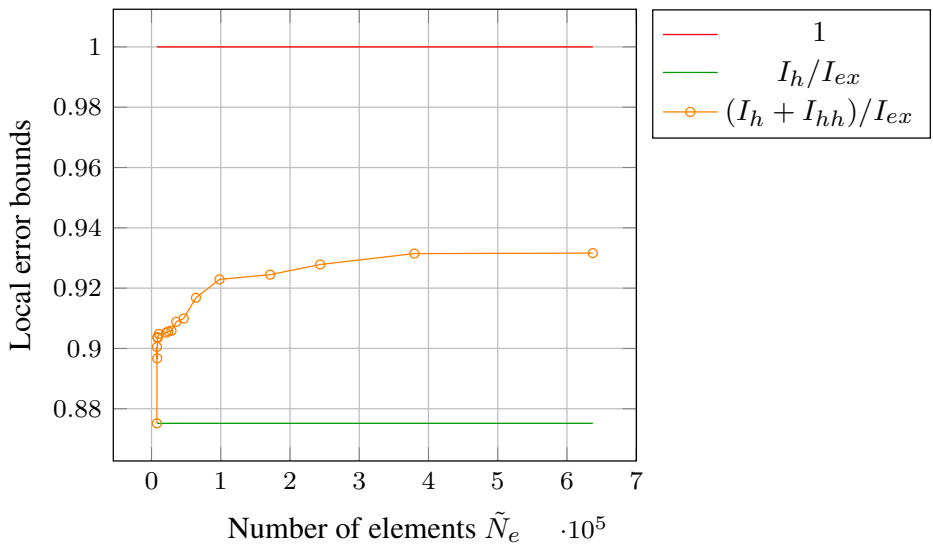

%: 6.3
\subsection{Pointwise value of a field: case of quantity of interest $I_2$}\label{6.3}
In order to properly solve the adjoint problem and to capture the singularities of its solution, specific pre-calculated handbook functions are introduced locally in the neighborhood of the loading of the adjoint problem, \ie in the vicinity of point $P$. Let us recall that handbook techniques enable to achieve accurate local error bounds by using exactly the same spatial mesh for both adjoint and reference problems. The graph represented in \Fig{fig1:normalized_bounds_structure_fissuree_2D_fine_node_661_dep_x_handbook} shows the evolution of the normalized bounds $(\xibarsup,\xibarinf)$ of $\Iex$, obtained using the classical bounding technique coupled with handbook techniques, with respect to the number of enriched nodes. The corresponding number of layers of enriched nodes ranges from $1$ to $7$. By using the same spatial discretization for both residual adjoint and reference problems, meaningful and effective bounds can be obtained with an enrichment via the PUM involving only few nodes. In the following, we restrict to an enrichment applied only to one or two layers of nodes without performing any refinement of the residual adjoint problem.

\begin{figure}
\centering
\begin{tikzpicture}[scale=1.2]%[baseline]
\pgfplotsset{
xlabel near ticks,
ylabel near ticks,
label style={font=\small},
tick label style={font=\footnotesize},
legend style={font=\small},
try min ticks=7
}
\begin{axis}[
	width=0.4\textwidth,
	scale only axis,
	%ymin=-9,ymax=10,
	%axis y line*=left,
	%axis on top,
	%xlabel=Nombre de n\oe uds enrichis,
	xlabel=Number of enriched nodes,
	%ylabel=Bornes d'erreur locale,
	ylabel=Local error bounds,
	grid=major,
	legend pos=outer north east,
	legend entries={
	$1$,
	$\xibarinf$, $\xibarsup$, %$(\Ih + \Ihh - \frac{e_{\cre} \: \tilde{e}_{\cre}}{2})/\Iex$, $(\Ih + \Ihh + \frac{e_{\cre} \: \tilde{e}_{\cre}}{2})/\Iex$,
	%$(\Ih + \Ihh - \sum_E \frac{e_{\cre,E} \: \tilde{e}_{\cre,E}}{2})/\Iex$, $(\Ih + \Ihh + \sum_E \frac{e_{\cre,E} \: \tilde{e}_{\cre,E}}{2})/\Iex$
	}
]
\addplot+[sharp plot,red,solid,mark=none,mark options={red,scale=0.7}] table[x=nb_node,y expr=\thisrow{I_ex}/\thisrow{I_ex}] {structure_fissuree_2D_fine_node_661_dep_x_handbook.txt};
\addplot+[sharp plot,black,solid,mark=diamond,mark options={black,scale=1.0}] table[x=nb_node,y expr=(\thisrow{I_h}+\thisrow{I_hh_w}+(\thisrow{theta_EET_EF}*\thisrow{theta_EET_adjoint})/2)/\thisrow{I_ex}] {structure_fissuree_2D_fine_node_661_dep_x_handbook.txt};
\addplot+[sharp plot,black,solid,mark=square,mark options={black,scale=0.7}] table[x=nb_node,y expr=(\thisrow{I_h}+\thisrow{I_hh_w}-(\thisrow{theta_EET_EF}*\thisrow{theta_EET_adjoint})/2)/\thisrow{I_ex}] {structure_fissuree_2D_fine_node_661_dep_x_handbook.txt};
%\addplot+[sharp plot,teal,solid,mark=diamond,mark options={teal,scale=1.0}] table[x=nb_node,y expr=(\thisrow{I_h}+\thisrow{I_hh_w}+\thisrow{sum})/\thisrow{I_ex}] {structure_fissuree_2D_fine_node_661_dep_x_handbook.txt};
%\addplot+[sharp plot,teal,solid,mark=square,mark options={teal,scale=0.7}] table[x=nb_node,y expr=(\thisrow{I_h}+\thisrow{I_hh_w}-\thisrow{sum})/\thisrow{I_ex}] {structure_fissuree_2D_fine_node_661_dep_x_handbook.txt};
\end{axis}
\end{tikzpicture}
\caption{Evolutions of the lower and upper normalized bounds $(\xibarinf,\xibarsup)$ of $\Iex$ for local quantity $I_2$, obtained using the classical bounding technique combined with handbook techniques, with respect to the number of enriched nodes.}\label{fig1:normalized_bounds_structure_fissuree_2D_fine_node_661_dep_x_handbook}
\end{figure}
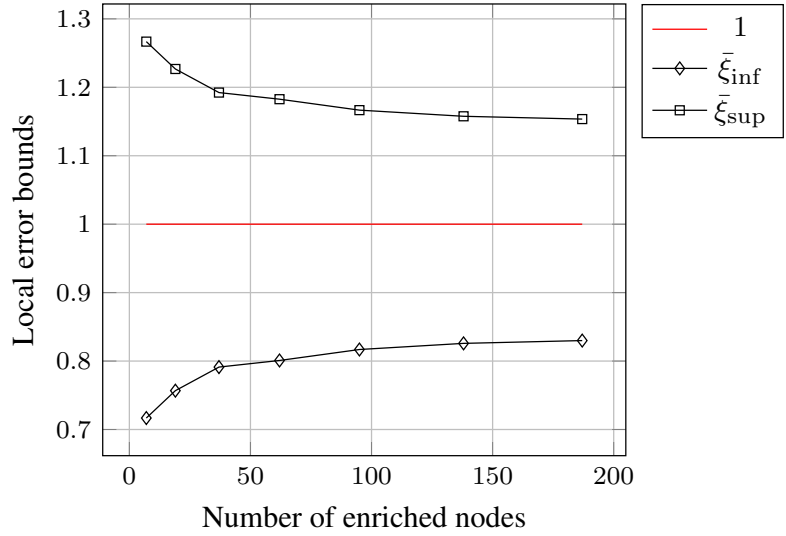

Maps of the local contributions to the global error estimates for (residual) adjoint problem with both enrichments are displayed in \Fig{fig1:distribution_estimators_structure_fissuree_2D_dep_x_node_661_enrichment_nb_layers_1_2}. The main contributions to the error estimate associated to residual adjoint problem are concentrated in zone $\Om_2^{PUM}$, which constitutes the support of the loading of residual adjoint problem. Similarly to the previous case presented in \Sect{6.2}, the error estimates for both reference and residual adjoint problems are located in distant regions.

\begin{figure}
\centering\includegraphics[scale = 0.4]{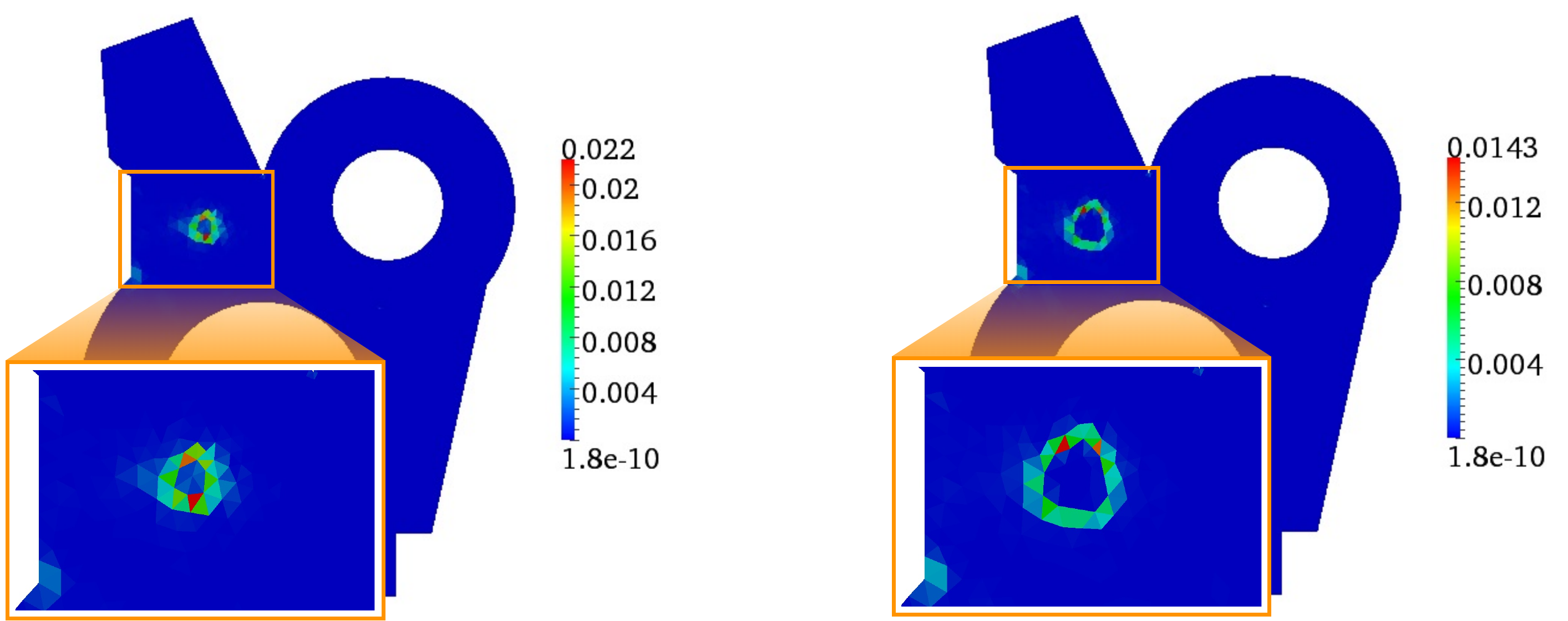}
\caption{Spatial distribution of local contributions to the error estimates associated to (residual) adjoint problem related to local quantity $I_2$, with one (left) and two (right) layers of nodes involved in the enrichment. Zoom boxes represent the estimated errors around enriched zone $\Om^{PUM}$.}\label{fig1:distribution_estimators_structure_fissuree_2D_dep_x_node_661_enrichment_nb_layers_1_2}
\end{figure}

For the enrichment involving a single layer of nodes, the values of parameters $\la$ and $\labar$ involved in the first improved technique are set to $1.7 r$ and $7 r$, respectively, where $r$ corresponds to the distance between point $P$ (defining the pointwise zone of interest) and its farthest neighbor node. The value of parameter $\labar_{\opt}$ involved in the second improved technique is set to $4.4 r$, which enables to achieve the sharpest bounds for quantity of interest $I_2$. The corresponding subdomains $\omla$, $\omlabar$ and $\omlabaropt$ are illustrated in \Fig{fig1:structure_fissuree_fine_Triangle_EF_pointwise_dep_x_node_661_enrichment_nb_layers_1_homothetic_domains}.

\begin{figure}
\centering\includegraphics[scale = 0.35]{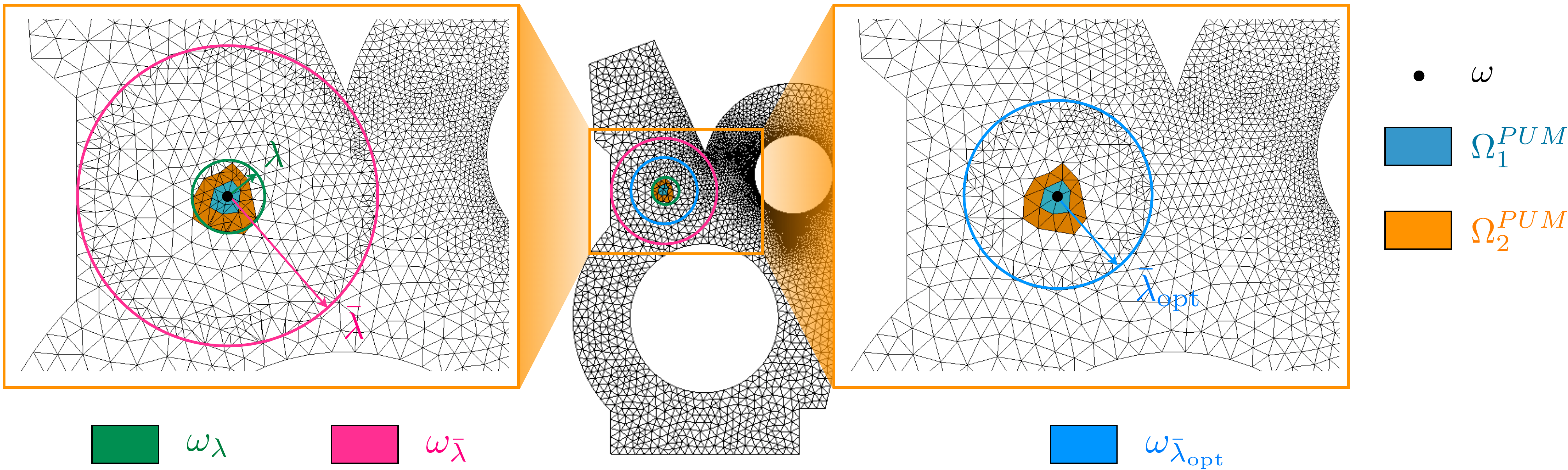}
\caption{Definition of subdomains $\omla$, $\omlabar$ involved in the first improved technique and $\omlabaropt$ involved in the second one for quantity of interest $I_2$ with a single layer of nodes involved in the enrichment.}\label{fig1:structure_fissuree_fine_Triangle_EF_pointwise_dep_x_node_661_enrichment_nb_layers_1_homothetic_domains}
\end{figure}

For the enrichment involving two layers of nodes, the values of parameters $\la$ and $\labar$ involved in the first improved technique are set to $2.5 r$ and $7 r$, while the value of parameter $\labar_{\opt}$ involved in the second improved technique is set to $4.4 r$, which enables to achieve the sharpest bounds for quantity of interest $I_2$. The corresponding subdomains $\omla$, $\omlabar$ and $\omlabaropt$ are illustrated in \Fig{fig1:structure_fissuree_fine_Triangle_EF_pointwise_dep_x_node_661_enrichment_nb_layers_2_homothetic_domains}.

\begin{figure}
\centering\includegraphics[scale = 0.35]{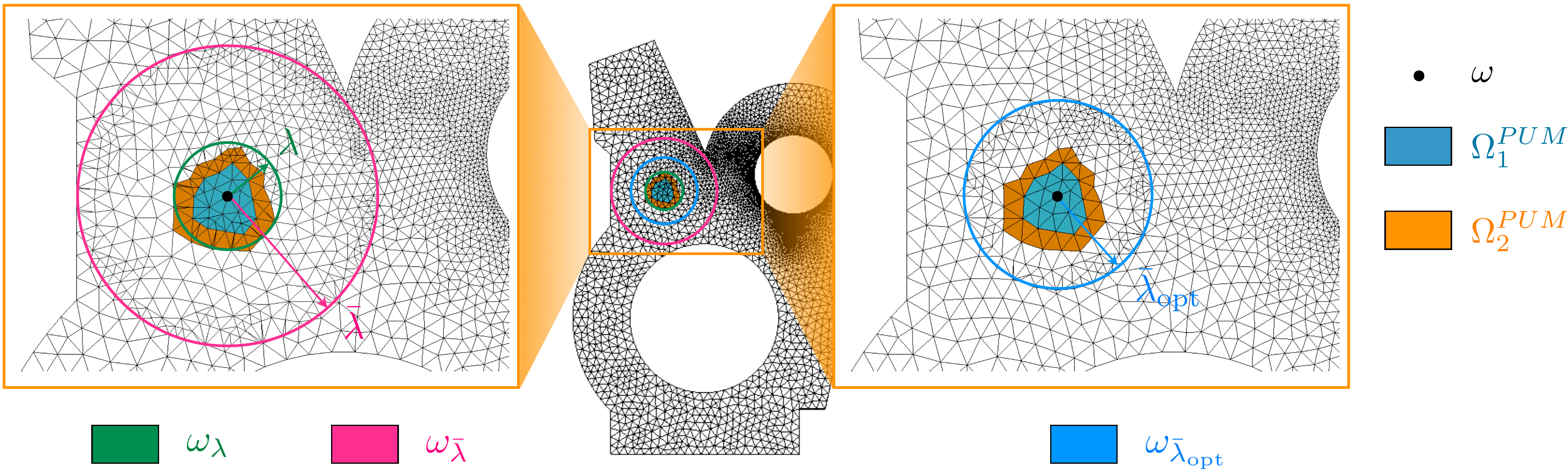}
\caption{Definition of subdomains $\omla$, $\omlabar$ involved in the first improved technique and $\omlabaropt$ involved in the second one for quantity of interest $I_2$ with two layers of nodes involved in the enrichment.}\label{fig1:structure_fissuree_fine_Triangle_EF_pointwise_dep_x_node_661_enrichment_nb_layers_2_homothetic_domains}
\end{figure}

The normalized bounds obtained using classical and improved bounding techniques combined with handbook techniques for an enrichment by the PUM involving either one or two layers of nodes are summarized in \Tab{table1:normalized_bounds_I_2}.
\begin{table}
\caption{Normalized bounds obtained using classical and optimized bounding techniques for quantity of interest $I_2$}
\centering
\tabsize
\begin{tabular}{l c c c c c c}
\toprule
Number of layers & \multicolumn{2}{c}{Classical bounds} & \multicolumn{2}{c}{Improved bounds 1} & \multicolumn{2}{c}{Improved bounds 2} \\
of enriched nodes & $\xibarinf$ & $\xibarsup$ & $\chibarinf$ & $\chibarsup$ & $\zetabarinf$ & $\zetabarsup$ \\
\midrule
$1$ & $0.7168$ & $1.2668$ & $0.7291$ & $1.2545$ & $0.8512$ & $1.1323$ \\
\midrule
$2$ & $0.7567$ & $1.2268$ & $0.7387$ & $1.2451$ & $0.8543$ & $1.1292$ \\
\bottomrule
\end{tabular}
\label{table1:normalized_bounds_I_2}
\end{table}

The analysis of the results reveals that bounds $(\zetabarinf,\zetabarsup)$ obtained using the second improved technique are more accurate than the ones $(\xibarinf,\xibarsup)$ obtained using the classical technique as well as the ones $(\chibarinf,\chibarsup)$ obtained using the first improved technique for both considered enrichments. It is worth noticing that bounds $(\chibarinf,\chibarsup)$ obtained using the first improved technique are coarser than the ones $(\xibarinf,\xibarsup)$ obtained using the classical technique for the second enrichment. Indeed, performances of the first improved technique strongly depend on ratio $\displaystyle\frac{\la}{\labar}$; the larger the enriched zone is, the higher ratio $\displaystyle\frac{\la}{\labar}$ is. 

Similar results can be obtained for the value at point $P$ of the other component $(.)_y$ of displacement field $\uu$.

%: 6.4
\subsection{Extracted value of a field: case of quantity of interest $I_3$}\label{6.4}
Map of the local contributions to the global error estimate for adjoint problem is depicted in \Fig{fig1:distribution_estimators_structure_fissuree_2D_SIF_I_pos_crack_tip_109_105_R1_6_R2_8}. The adjoint mesh density has been slightly increased toward the crown $\Om_c$; the adjoint mesh is made of $10 \, 699$ linear triangular elements and $5 \, 597$ nodes (\ie $11 \, 194$ d.o.f.). The highest contributions to the error estimate associated to adjoint problem are localized along the boundary of crown $\Om_c$. Therefore, the error estimates for both reference and adjoint problems are localized in close regions, contrary to previous cases presented in \Sects{6.2} and \ref{6.3}.

\begin{figure}
\centering\includegraphics[scale = 0.4]{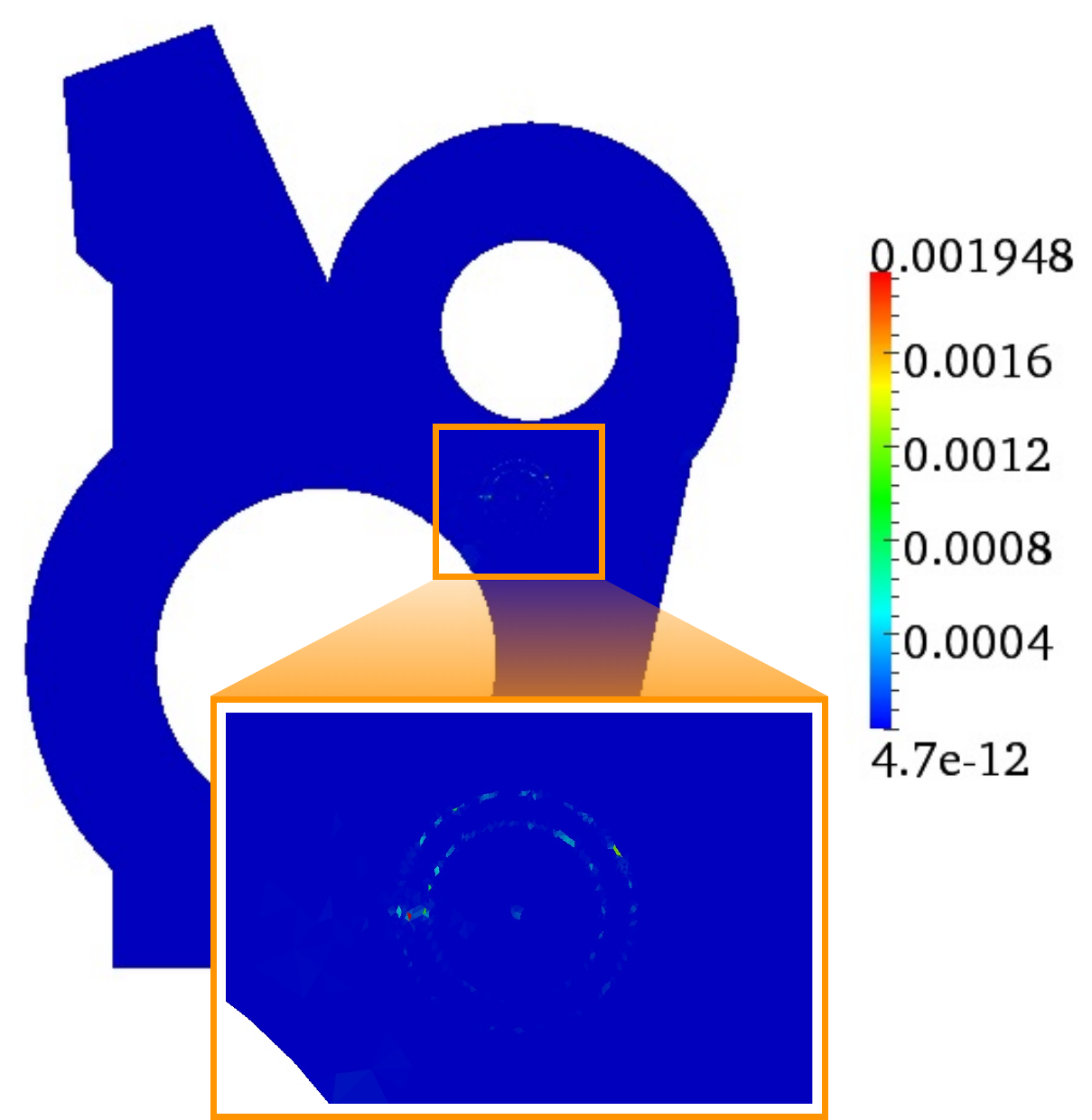}
\caption{Spatial distribution of local contributions to the error estimate associated to adjoint problem related to local quantity $I_3$. Zoom box represents the estimated error around crown $\Om_c$.}\label{fig1:distribution_estimators_structure_fissuree_2D_SIF_I_pos_crack_tip_109_105_R1_6_R2_8}
\end{figure}

The values of parameters $\la$ and $\labar$ involved in the first improved technique are set to $1.2 R_e$ and $2.1 R_e$, respectively, where $R_e$ corresponds to the radius of external circle $\Gamma_e$. The value of parameter $\labar_{\opt}$ involved in the second improved technique is set to $1.6 R_e$, which enables to achieve the sharpest bounds for quantity of interest $I_3$. The corresponding subdomains $\omla$, $\omlabar$ and $\omlabaropt$ are illustrated in \Fig{fig1:structure_fissuree_fine_Triangle_EF_SIF_I_pos_crack_tip_109_105_R1_6_R2_8_homothetic_domains}.

\begin{figure}
\centering\includegraphics[scale = 0.37]{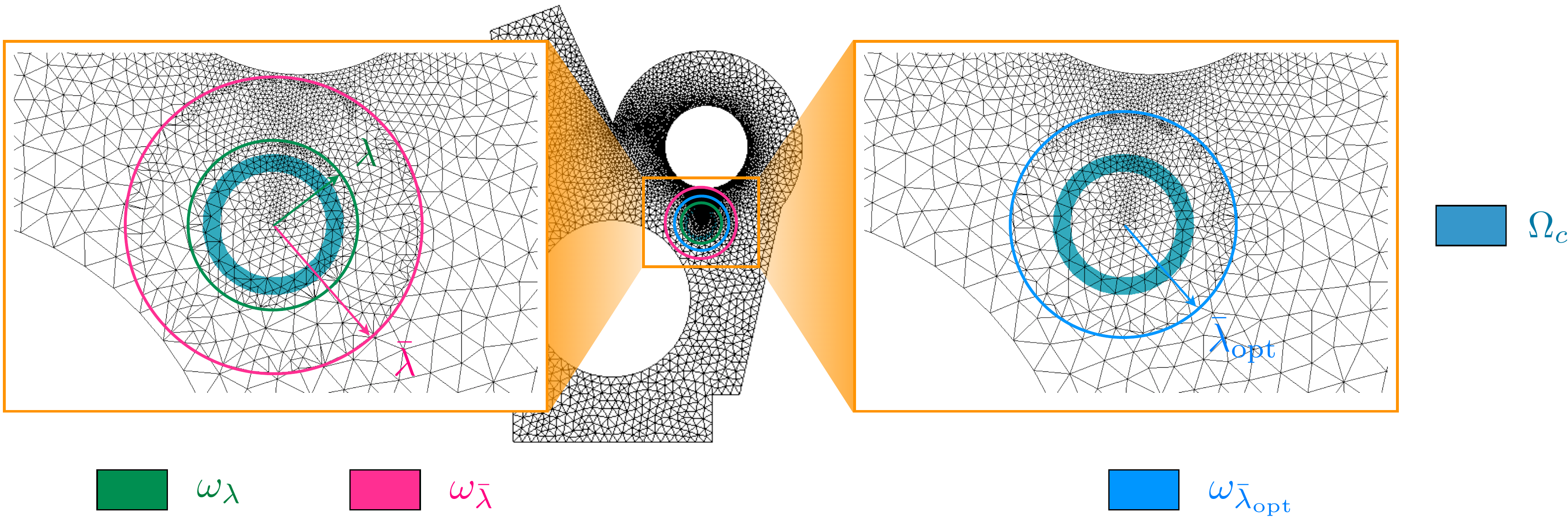}
\caption{Definition of subdomains $\omla$, $\omlabar$ involved in the first improved technique and $\omlabaropt$ involved in the second one for quantity of interest $I_3$.}\label{fig1:structure_fissuree_fine_Triangle_EF_SIF_I_pos_crack_tip_109_105_R1_6_R2_8_homothetic_domains}
\end{figure}

\Fig{fig1:normalized_bounds_structure_fissuree_2D_fine_SIF_I_pos_crack_tip_109_105_R1_6_R2_8} shows the evolutions of the normalized bounds on $\Iex$ for quantity of interest $I_3$ as functions of the number of elements $\tilde{N}_e$ of adjoint problem for the classical bounding technique as well as the two improved ones. A local refinement of adjoint mesh has been performed adaptively near the zone of interest $\om$, especially along internal and external boundaries of crown $\Om_c$, since the contributions to the global error estimate of the adjoint problem are mainly located in this region. One can see a moderate decrease in the bounds obtained with the first improved technique compared to the classical one., while the second improved technique and the classical one give similar results as regards the accuracy of the bounds. Consequently, the improvements presented in this work may lead to useless bounds in cases where the major part of estimated error related to both reference and adjoint problems are localized in close regions. Similar results can be obtained for the shear-mode stress intensity factor $\KII$.

\begin{figure}
\centering
\begin{tikzpicture}[scale=1.2]%[baseline]
\pgfplotsset{
xlabel near ticks,
ylabel near ticks,
label style={font=\small},
tick label style={font=\footnotesize},
legend style={font=\small},
try min ticks=7
}
\begin{axis}[
	width=0.4\textwidth,
	scale only axis,
	ymin=0,ymax=2,
	%axis y line*=left,
	%axis on top,
	%xlabel=Nombre d'\'el\'ements $\tilde{N}_e$,
	xlabel=Number of elements $\tilde{N}_e$,
	%ylabel=Bornes d'erreur locale,
	ylabel=Local error bounds,
	grid=major,
	legend pos=outer north east,
	legend entries={
	$1$,
	$\xibarinf$, $\xibarsup$,
	$\chibarinf$, $\chibarsup$,
	$\zetabarinf$, $\zetabarsup$,
	%$(\Ih + \Ihh - \sum_E \frac{e_{\cre,E} \: \tilde{e}_{\cre,E}}{2})/\Iex$, $(\Ih + \Ihh + \sum_E \frac{e_{\cre,E} \: \tilde{e}_{\cre,E}}{2})/\Iex$
	}
]
\addplot+[sharp plot,red,solid,mark=none,mark options={red,scale=0.7}] table[x=nb_elem,y expr=\thisrow{I_ex}/\thisrow{I_ex}] {structure_fissuree_2D_fine_SIF_I_pos_crack_tip_109_105_R1_6_R2_8.txt};
\addplot+[sharp plot,black,solid,mark=diamond,mark options={black,scale=1.0}] table[x=nb_elem,y expr=(\thisrow{I_h}+\thisrow{I_hh_w}-(\thisrow{theta_EET_EF}*\thisrow{theta_EET_adjoint})/2)/\thisrow{I_ex}] {structure_fissuree_2D_fine_SIF_I_pos_crack_tip_109_105_R1_6_R2_8.txt};
\addplot+[sharp plot,black,solid,mark=square,mark options={black,scale=0.7}] table[x=nb_elem,y expr=(\thisrow{I_h}+\thisrow{I_hh_w}+(\thisrow{theta_EET_EF}*\thisrow{theta_EET_adjoint})/2)/\thisrow{I_ex}] {structure_fissuree_2D_fine_SIF_I_pos_crack_tip_109_105_R1_6_R2_8.txt};
\addplot+[sharp plot,blue,solid,mark=diamond,mark options={blue,scale=1.0}] table[x=nb_elem,y expr=(\thisrow{I_h}+\thisrow{I_hh_w}+\thisrow{I_hhh}-\thisrow{xi_la_min_1.2_max_2.1})/\thisrow{I_ex}] {structure_fissuree_2D_fine_SIF_I_pos_crack_tip_109_105_R1_6_R2_8.txt};
\addplot+[sharp plot,blue,solid,mark=square,mark options={blue,scale=0.7}] table[x=nb_elem,y expr=(\thisrow{I_h}+\thisrow{I_hh_w}+\thisrow{I_hhh}+\thisrow{xi_la_min_1.2_max_2.1})/\thisrow{I_ex}] {structure_fissuree_2D_fine_SIF_I_pos_crack_tip_109_105_R1_6_R2_8.txt};
\addplot+[sharp plot,magenta,solid,mark=diamond,mark options={magenta,scale=1.0}] table[x=nb_elem,y expr=(\thisrow{I_h}+\thisrow{I_hh_w}-\thisrow{zeta_la_1.6})/\thisrow{I_ex}] {structure_fissuree_2D_fine_SIF_I_pos_crack_tip_109_105_R1_6_R2_8.txt};
\addplot+[sharp plot,magenta,solid,mark=square,mark options={magenta,scale=0.7}] table[x=nb_elem,y expr=(\thisrow{I_h}+\thisrow{I_hh_w}+\thisrow{zeta_la_1.6})/\thisrow{I_ex}] {structure_fissuree_2D_fine_SIF_I_pos_crack_tip_109_105_R1_6_R2_8.txt};
%\addplot+[sharp plot,teal,solid,mark=diamond,mark options={teal,scale=1.0}] table[x=nb_elem,y expr=(\thisrow{I_h}+\thisrow{I_hh_w}-\thisrow{sum})/\thisrow{I_ex}] {structure_fissuree_2D_fine_SIF_I_pos_crack_tip_109_105_R1_6_R2_8.txt};
%\addplot+[sharp plot,teal,solid,mark=square,mark options={teal,scale=0.7}] table[x=nb_elem,y expr=(\thisrow{I_h}+\thisrow{I_hh_w}+\thisrow{sum})/\thisrow{I_ex}] {structure_fissuree_2D_fine_SIF_I_pos_crack_tip_109_105_R1_6_R2_8.txt};
\end{axis}
\end{tikzpicture}
\caption{Evolutions of the lower and upper normalized bounds of $\Iex$ for local quantity $I_3$, obtained using the classical bounding technique as well as first and second improvements, with respect to the number of elements $\tilde{N}_e$ associated to the discretization of the adjoint problem.}\label{fig1:normalized_bounds_structure_fissuree_2D_fine_SIF_I_pos_crack_tip_109_105_R1_6_R2_8}
\end{figure}
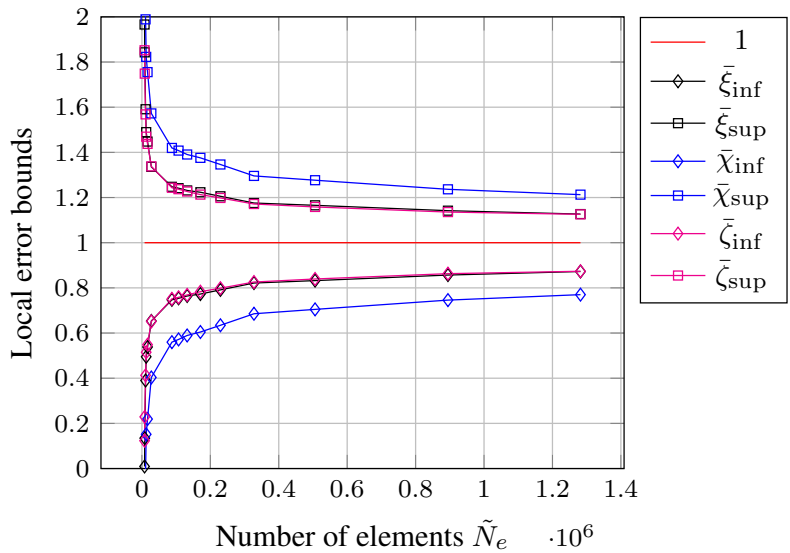

\Fig{fig1:normalized_interest_quantities_structure_fissuree_2D_fine_SIF_I_pos_crack_tip_109_105_R1_6_R2_8} represents the evolutions of the normalized exact value $\Iex/\Iex$ of local quantity $I_3$, its normalized approximate value $\Ih/\Iex$ obtained through the FEM and its new normalized approximate value $(\Ih + \Ihh)/\Iex$ as functions of the number of elements $\tilde{N}_e$ of adjoint problem. One can observe once again that $\Ih + \Ihh$ corresponds to an approximation of better quality with respect to $\Iex$ compared to $\Ih$.

\begin{figure}
\centering
\begin{tikzpicture}[scale=1.2]%[baseline]
\pgfplotsset{
xlabel near ticks,
ylabel near ticks,
label style={font=\small},
tick label style={font=\footnotesize},
legend style={font=\small},
%try min ticks=7
}
\begin{axis}[
	width=0.4\textwidth,
	scale only axis,
	yticklabel style={/pgf/number format/precision=3},
	%axis y line*=left,
	%axis on top,
	%xlabel=Nombre d'\'el\'ements $\tilde{N}_e$,
	xlabel=Number of elements $\tilde{N}_e$,
	%ylabel=Bornes d'erreur locale,
	ylabel=Local error bounds,
	grid=major,
	legend pos=outer north east,
	legend entries={
	$1$,
	$\Ih/\Iex$,
	$(\Ih + \Ihh)/\Iex$,
	}
]
\addplot+[sharp plot,red,solid,mark=none,mark options={red,scale=0.7}] table[x=nb_elem,y expr=\thisrow{I_ex}/\thisrow{I_ex}] {structure_fissuree_2D_fine_SIF_I_pos_crack_tip_109_105_R1_6_R2_8.txt};
\addplot+[sharp plot,green!60!black,solid,mark=none,mark options={green!60!black,scale=0.7}] table[x=nb_elem,y expr=\thisrow{I_h}/\thisrow{I_ex}] {structure_fissuree_2D_fine_SIF_I_pos_crack_tip_109_105_R1_6_R2_8.txt};
\addplot+[sharp plot,orange,solid,mark=o,mark options={orange,scale=0.7}] table[x=nb_elem,y expr=(\thisrow{I_h}+\thisrow{I_hh_w})/\thisrow{I_ex}] {structure_fissuree_2D_fine_SIF_I_pos_crack_tip_109_105_R1_6_R2_8.txt};
\end{axis}
\end{tikzpicture}
\caption{Evolutions of the normalized exact value of local quantity $I_3$, its normalized approximate value $\Ih/\Iex$ and its new normalized approximate value $(\Ih + \Ihh)/\Iex$ with respect to the number of elements $\tilde{N}_e$ associated to the discretization of the adjoint problem.}\label{fig1:normalized_interest_quantities_structure_fissuree_2D_fine_SIF_I_pos_crack_tip_109_105_R1_6_R2_8}
\end{figure}
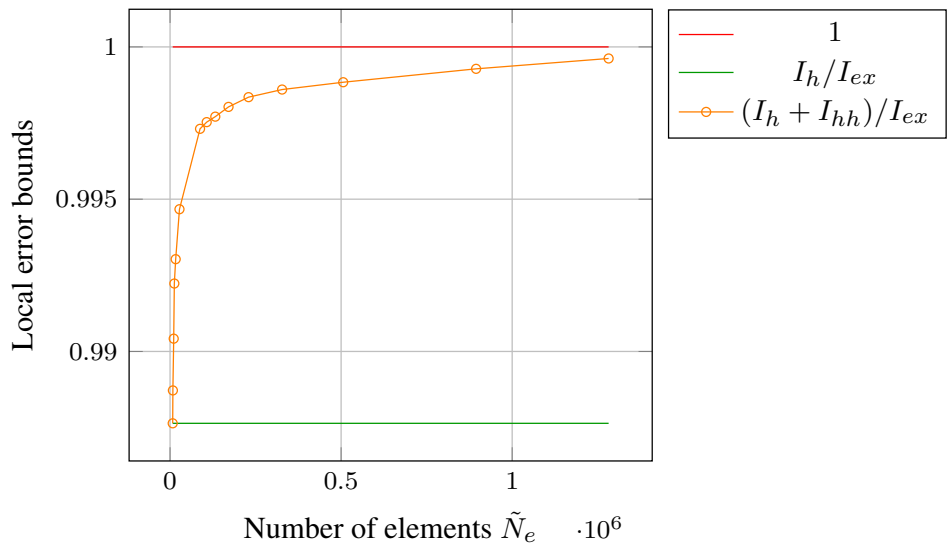

%: 7. Conclusion and prospects
\section{Conclusion and prospects}\label{7}

In this paper, we introduced two new approaches related to the general framework of robust goal-oriented error estimation dealing with extraction techniques. These techniques are based on mathematical tools which are not classical in model verification. Several numerical experiments clearly demonstrate the efficiency of these methods to produce strict and relevant bounds on the errors in linear local quantities of interest compared to the classical bounding technique, especially when the discretization error related to the reference error is not concentrated in the local zone of interest. Nevertheless, the second proposed technique seems to achieve sharper local error estimates than the first one.

Finally, such powerful methods may open up opportunities and help widen the field of robust goal-oriented error estimation methods. Both techniques could be easily extended to other quantities of interest but are restricted to linear problems, \ie cases where Saint-Venant's principle is well established. The extension to broader classes of mechanical problems such as time-dependent non linear problems is thus an open question. Besides, coupling these new improved methods with handbook techniques offers several potentially fruitful directions for future research. This crucial issue is still under active consideration.

%: Appendix A.
\section*{Appendix A. Proof of the main technical result of \Prop{prop1}}\label{appA}

First, let us recall that the CRE in subdomain $\omla$, included in $\Om$, defined by (\ref{eq1:ecrela}) can be recast in the following form:
\begin{align}
e^2_{\cre,\la} & = \lnorm{\cont - \hat{\cont}_h}^2_{\cont, \omla} + \lnorm{\uu - \hat{\uu}_h}^2_{u, \omla} - 2 \intola \Tr\big[(\cont - \hat{\cont}_h) \: \defo(\uu - \hat{\uu}_h) \big] \dO \label{eq1:RDCla} \\
 & = \lnorm{\cont - \hat{\cont}_h}^2_{\cont, \omla} + \lnorm{\uu - \hat{\uu}_h}^2_{u, \omla} - 2 \intdola (\cont - \hat{\cont}_h) \: \und{n} \cdot (\uu - \hat{\uu}_h) \dS. \label{eq2:RDCla}
\end{align}

Let us now consider the following decomposition of the discretization error $\und{e}_h = \uu - \hat{\uu}_h$ on subdomain $\omla$:
\begin{align}\label{eq1:decomposition}
\uu - \hat{\uu}_h = \uu_1 + \uu_2,
\end{align}
where the local error $\uu_1$ and the pollution error $\uu_2$ are solutions of local problems $(\Pone)$ and $(\Ptwo)$, respectively, defined on $\omla$ and introduced in \Sect{4.2}.

Such a decomposition verifies
\begin{align}
\lnorm{\uu - \hat{\uu}_h}^2_{u, \omla} = \lnorm{\uu_1}^2_{u, \omla} + \lnorm{\uu_2}^2_{u, \omla}
\end{align}

and
\begin{align}
\intdola (\cont - \hat{\cont}_h) \: \und{n} \cdot (\uu - \hat{\uu}_h) \dS & = \intdola (\cont - \hat{\cont}_h) \: \und{n} \cdot \uu_2 \dS \\
 & = \intdola \Tr\big[(\cont - \hat{\cont}_h) \: (\uu_2 \otimes \und{n}) \big] \dS \\
 & = \lscalproda{\cont - \hat{\cont}_h}{\K \: {(\uu_2 \otimes \und{n})}_{\sym}}_{\cont, \domla} \\
 & \leqslant \lnorm{\cont - \hat{\cont}_h}_{\cont, \domla} \lnorm{\K \: {(\uu_2 \otimes \und{n})}_{\sym}}_{\cont, \domla},
\end{align}
where ${(\bullet)}_{\sym}$ represents the symmetric part of matrix $\bullet$.

Then, relation (\ref{eq2:RDCla}) becomes:
\begin{align}
e^2_{\cre,\la} \geqslant \lnorm{\cont - \hat{\cont}_h}^2_{\cont, \omla} + \lnorm{\uu_1}^2_{u, \omla} + \lnorm{\uu_2}^2_{u, \omla} - 2 \lnorm{\cont - \hat{\cont}_h}_{\cont, \domla} \lnorm{\K \: {(\uu_2 \otimes \und{n})}_{\sym}}_{\cont, \domla}. \label{eq3:RDCla}
\end{align}

By observing that $\uu_2 \in \Vcb$, where space $\Vcb$ has been defined in (\ref{eq1:V}), let us now introduce the Steklov constant $h \la$ defined in (\ref{eq1:constanthla}), which leads to:
\begin{align}
\lnorm{\K \: {(\uu_2 \otimes \und{n})}_{\sym}}_{\cont, \domla} \leqslant \sqrt{h \la} \lnorm{\uu_2}_{u, \omla}. \label{eq2:constanthla}
\end{align}

As a result, the following inequality can be deduced from (\ref{eq3:RDCla}) and (\ref{eq2:constanthla}):
\begin{align}
e^2_{\cre,\la} \geqslant & \lnorm{\cont - \hat{\cont}_h}^2_{\cont, \omla} + \lnorm{\uu_1}^2_{u, \omla} + \displaystyle\left[ \lnorm{\uu_2}_{u, \omla} - \sqrt{h \la} \lnorm{\cont - \hat{\cont}_h}_{\cont, \domla} \right]^2 - h \la \lnorm{\cont - \hat{\cont}_h}^2_{\cont, \domla} \label{eq4:RDCla}
\end{align}

Besides, using equality (\ref{eq1:diffintola}) for quantity $\Tr\big[(\cont - \hat{\cont}_h) \: \K^{-1} \: (\cont - \hat{\cont}_h)\big]$, one obtains:
\begin{align}
\frac{\diff}{\dla} \left[ \lnorm{\cont - \hat{\cont}_h}^2_{\cont, \omla} \right] & = \lnorm{\cont - \hat{\cont}_h}^2_{\cont, \domla}. \label{eq2:diffintola}
\end{align}
Incorporating (\ref{eq2:diffintola}) into (\ref{eq4:RDCla}) and rearranging the terms, inequality (\ref{eq4:RDCla}) can be rewritten as follows:
\begin{align}
\al_{\la} \geqslant \lnorm{\cont - \hat{\cont}_h}^2_{\cont, \omla} - h \la \frac{\diff}{\dla} \left[ \lnorm{\cont - \hat{\cont}_h}^2_{\cont, \omla} \right] \label{eq5:RDCla}
\end{align}
where $\displaystyle \al_{\la} \equiv \al_{\la}(\hat{\uu}_h, \hat{\cont}_h, \uu_1, \uu_2) = e^2_{\cre,\la} - \lnorm{\uu_1}^2_{u, \omla} - \left[ \lnorm{\uu_2}_{u, \omla} - \sqrt{h \la} \lnorm{\cont - \hat{\cont}_h}_{\cont, \domla} \right]^2$.
Let us note that first order ordinary differential inequation (\ref{eq5:RDCla}) verified by $\lnorm{\cont - \hat{\cont}_h}^2_{\cont, \omla}$ can be reformulated as:
\begin{align}
\frac{\diff}{\dla} \left[ \lnorm{\cont - \hat{\cont}_h}^2_{\cont, \omla} f(\la)\right] \frac{1}{f(\la)} \geqslant - \frac{1}{h \la} \al_{\la}, \label{eq6:RDCla}
\end{align}
where $\displaystyle f \colon \la \mapsto \la^{-1/h}$ is a strictly positive function for any $\la > 0$.
Finally, a worthwhile relation between $\lnorm{\cont - \hat{\cont}_h}^2_{\cont, \omla}$ and $\lnorm{\cont - \hat{\cont}_h}^2_{\cont, \omlabar}$ can be derived integrating inequality (\ref{eq6:RDCla}) over $\intervalcc{\la}{\labar}$. It reads:
\begin{align}
\lnorm{\cont - \hat{\cont}_h}^2_{\cont, \omla} \leqslant \left( \frac{\la}{\labar} \right)^{1/h} \lnorm{\cont - \hat{\cont}_h}^2_{\cont, \omlabar} + \beta_{\la, \labar}, \label{eq1:fundrelation}
\end{align}
where $\displaystyle\beta_{\la, \labar} \equiv \beta_{\la, \labar}(\hat{\uu}_h, \hat{\cont}_h, \uu_1, \uu_2) = \int_{\la' = \la}^{\labar} \left[ \left( \frac{\la'}{\la} \right)^{-1/h} \frac{1}{h \la'} \al_{\la'} \right] \dla'$ is a function involving displacement solutions $\uu_1$ and $\uu_2$ of problems $(\Pone)$ and $(\Ptwo)$, respectively. In practice, inequality $\displaystyle \al_{\la'} \leqslant e^2_{\cre,\la'} \quad \forall \: \la' \in \intervalcc{\la}{\labar}$ allows to work around the extra resolutions of problems $(\Pone)$ and $(\Ptwo)$ and leads to the following fundamental result of \Prop{prop1}:
\begin{align}
\lnorm{\cont - \hat{\cont}_h}^2_{\cont, \omla} \leqslant \left( \frac{\la}{\labar} \right)^{1/h} \lnorm{\cont - \hat{\cont}_h}^2_{\cont, \omlabar} + \gamma_{\la, \labar}, \label{eq2:fundrelation}
\end{align}
where $\displaystyle\gamma_{\la, \labar}$ is completely defined by (\ref{eq1:gamma}) in terms of computable known quantities. For practical purposes, one-dimensional numerical integration methods, such as the simple trapezoidal rule or the classical Gauss-Legendre quadrature rule, can be employed to get an accurate approximation of $\gamma_{\la, \labar}$.

\begin{remark}
Other expressions for $\gamma_{\la, \labar}$ can be derived directly by integration by parts; it readily reads:
\begin{align}
\displaystyle\gamma_{\la, \labar} = e^2_{\cre,\la} - \left( \frac{\la}{\labar} \right)^{1/h} e^2_{\cre,\labar} + \int_{\la' = \la}^{\labar} \left[ \left( \frac{\la'}{\la} \right)^{-1/h} \frac{\diff}{\dla'} \left[ e^2_{\cre,\la'} \right] \right] \dla'. \label{eq2:gamma}
\end{align}
However, implementation of this latter expression is more cumbersome compared to the former one, since it requires the integration of the first derivative of function $e^2_{\cre,\la'}$ instead of function $e^2_{\cre,\la'}$ itself. Nevertheless, applying relation (\ref{eq2:diff}) to quantity $\hat{\cont}_h - \K \: \defo(\hat{\uu}_h)$ for any homothetic domain $\omlaprime$ such that $\la' \in \intervalcc{\la}{\labar}$, one obtains:
\begin{equation}
\displaystyle\frac{\diff}{\dla'} \left[ e^2_{\cre,\la'} \right] = \frac{\diff}{\dla'} \left[ \lnorm{\hat{\cont}_h - \K \: \defo(\hat{\uu}_h)}^2_{\cont, \omlaprime} \right] = {\left(\frac{\la'}{\displaystyle\labar}\right)}^n \frac{1}{\displaystyle\labar} \labs{\hat{\cont}_h - \K \: \defo(\hat{\uu}_h)}^2_{\cont, \domlabar};
\end{equation}
consequently, using relation (\ref{eq1:linkintolaintdolabar}) for both domains $\omla$ and $\omlabar$, one has:
\begin{align}
\displaystyle\gamma_{\la, \labar} & = e^2_{\cre,\la} - \left( \frac{\la}{\labar} \right)^{1/h} e^2_{\cre,\labar} + \int_{\la' = \la}^{\labar} \left[ \left( \frac{\la'}{\la} \right)^{-1/h} {\left(\frac{\la'}{\displaystyle\labar}\right)}^n \frac{1}{\displaystyle\labar} \labs{\hat{\cont}_h - \K \: \defo(\hat{\uu}_h)}^2_{\cont, \domlabar} \right] \dla' \nonumber\\
& = e^2_{\cre,\la} - \left( \frac{\la}{\labar} \right)^{1/h} e^2_{\cre,\labar} + \int_{\la' = \la}^{\labar} \labs{\left( \frac{\la'}{\la} \right)^{-1/{2h}} \left(\hat{\cont}_h - \K \: \defo(\hat{\uu}_h) \right)}^2_{\cont, \domlabar} {\left(\frac{\la'}{\displaystyle\labar}\right)}^n \frac{1}{\displaystyle\labar} \dla' \nonumber\\
& = e^2_{\cre,\la} - \left( \frac{\la}{\labar} \right)^{1/h} e^2_{\cre,\labar} + e^2_{\wcre,\labar \setminus \la}, \label{eq3:gamma}
\end{align}
where the last term of the right-hand side:
\begin{align}
e^2_{\wcre,\labar \setminus \la} \equiv e^2_{\wcre,\labar \setminus \la}(\hat{\uu}_h, \hat{\cont}_h) & = \lnorm{\left( \frac{\la'}{\la} \right)^{-1/{2h}} \left(\hat{\cont}_h - \K \: \defo(\hat{\uu}_h) \right)}^2_{\cont, \omlabarminusla} \\
& = \lscalproda{\left( \frac{\la'}{\la} \right)^{-1/h} \left(\hat{\cont}_h - \K \: \defo(\hat{\uu}_h) \right)}{\hat{\cont}_h - \K \: \defo(\hat{\uu}_h)}_{\cont, \omlabarminusla}
\end{align}
can be viewed as a weighted constitutive relation error in $\omlabarminusla$. Even though this last relation (\ref{eq3:gamma}) does not call for one-dimensional numerical integration methods contrary to (\ref{eq1:gamma}) and (\ref{eq2:gamma}), it requires the numerical evaluation of a definite integral of a rational function over $\omlabarminusla$.

Eventually, in order to perform an accurate calculation of function $\gamma_{\la, \labar}$, expression (\ref{eq1:gamma}) computed by means of a basic trapezoidal integration method with a large number of integration points is preferred among the different aforementioned expressions.
\end{remark}

%: Appendix B.
\section*{Appendix B. Proof of the main technical result of \Prop{prop4}}\label{appB}

Let us consider $\vu \in \Vcb$. One can deduce the following inequality from (\ref{eq1:constantklabarla}):
\begin{align}
\forall \: \la \in \intervalcc{0}{\labar}, \quad \frac{k}{\la} \lnorm{\vu}^2_{u, \omla} \leqslant \frac{1}{\labar} \labs{\vu}^2_{u, \domla}. \label{ineq1:v}
\end{align}
Then, using (\ref{eq1:linkintdolaintdolabar}), one has: $\displaystyle\labs{\vu}^2_{u, \domla} = {\left(\frac{\la}{\displaystyle\labar}\right)}^n \labs{\vu}^2_{u, \domlabar}$ and (\ref{ineq1:v}) becomes:
\begin{align}
\frac{k}{\la} \lnorm{\vu}^2_{u, \omla} \leqslant {\left(\frac{\la}{\displaystyle\labar}\right)}^n \frac{1}{\displaystyle\labar} \labs{\vu}^2_{u, \domlabar}. \label{ineq2:v}
\end{align}
According to result (\ref{eq1:diff}), one has: $\displaystyle{\left(\frac{\la}{\displaystyle\labar}\right)}^n \frac{1}{\displaystyle\labar} \labs{\vu}^2_{u, \domlabar} = \frac{\diff}{\dla} \left[\lnorm{\vu}^2_{u, \omla}\right]$; consequently, the following first order ordinary differential inequation satisfied by $\lnorm{\vu}^2_{u, \omla}$ holds:
\begin{align}
\frac{k}{\la} \lnorm{\vu}^2_{u, \omla} \leqslant \frac{\diff}{\dla} \left[\lnorm{\vu}^2_{u, \omla}\right], \label{ineq3:v}
\end{align}
which can be recast as:
\begin{align}
\frac{\diff}{\dla} \left[ \lnorm{\vu}^2_{u, \omla} g(\la)\right] \frac{1}{g(\la)} \geqslant 0, \label{ineq4:v}
\end{align}
where $\displaystyle g \colon \la \mapsto \la^{-k}$ is a strictly positive function for any $\la > 0$; as a consequence, one gets:
\begin{align}
\frac{\diff}{\dla} \left[ \lnorm{\vu}^2_{u, \omla} g(\la)\right] \geqslant 0, \label{ineq5:v}
\end{align}

Finally, a fundamental result connecting $\lnorm{\vu}^2_{u, \omla}$ and $\lnorm{\vu}^2_{u, \omlabar}$ can be derived integrating inequality (\ref{ineq5:v}) over $\intervalcc{\la}{\labar}$. It reads:
\begin{align}
\lnorm{\vu}^2_{u, \omla} \leqslant \left( \frac{\la}{\labar} \right)^{k} \lnorm{\vu}^2_{u, \omlabar}, \label{eq1:fundrelation2}
\end{align}
which completes the proof of \Prop{prop4}.

%: Appendix C.
\section*{Appendix C. Analytical values of constant $h$ involved in the first improved technique}\label{appC}

The values of constant $h$, expressed in terms of Lam\'e's c\oe fficients $(\la, \mu)$ in column two and in terms of Poisson's ratio $\nu$ in the last columns (under both plane stress and plane strain assumptions for two-dimensional geometric shapes), are given in \Tab{table1:values_constant_h}.
\begin{table}
\caption{Analytical values of constant $h$ for different geometric shapes}
\centering
\tabsize
\begin{tabular}{l c c c}
\toprule
Geometric shape & \multicolumn{3}{c}{Constant \ $h$}\\
\midrule
Dim 2 & & plane stress assumption & plane strain assumption \\
\midrule
unit circle & $\displaystyle\frac{2 \mu + \la}{2 (\mu + \lambda)}$ & $\displaystyle\frac{1}{1 + \nu}$ & $1- \nu$ \\
unit cracked circle\footnotemark[1] & $\displaystyle\frac{2 \mu + \la}{2 (\mu + \lambda)} + \frac{1}{3 (2 \pi - \theta)} \frac{\mu}{\mu + \lambda}$ & $\displaystyle\frac{1}{1 + \nu} + \frac{1}{3 (2 \pi - \theta)} \frac{1 - \nu}{1 + \nu}$ & $1- \nu + \displaystyle\frac{1}{3 (2 \pi - \theta)} (1 - 2 \nu)$ \\
double unit square & $\displaystyle\frac{7 \mu + 3 \la}{6 (\mu + \lambda)}$ & $\displaystyle\frac{7 - \nu}{6 (1 + \nu)}$ & $\displaystyle\frac{7 - 8 \nu}{6}$ \\
\midrule
Dim 3 \\
\midrule
unit sphere & $\displaystyle\frac{2 \mu + \lambda}{2 \mu + 3 \lambda}$ & \multicolumn{2}{c}{$\displaystyle\frac{1 - \nu}{1 + \nu}$} \\
double unit parallelepiped & $\displaystyle\frac{8 \mu + 3 \la}{6 \mu + 9 \lambda}$ & \multicolumn{2}{c}{$\displaystyle\frac{4 - 5 \nu}{3 (1 + \nu)}$} \\
\bottomrule
\end{tabular}
\label{table1:values_constant_h}
\end{table}
\footnotetext[1]{$\theta$ denotes the angle between the two lips of the crack.}

%: Appendix D.
\section*{Appendix D. Analytical computation of constant $k$ involved in the second improved technique for circular and spherical shape domains}\label{appD}

Let us consider the space $\Vcb$ of functions satisfying homogeneous equilibrium equations expressed in displacements, also known as Lam\'e-Navier equations. Trefftz (or T-) functions are homogeneous solutions of the governing differential equations inside the domain (corresponding to Lam\'e-Navier equations in our case). Trefftz-type approaches, which consist in using a set of linearly independent solutions of a differential equation, were initially introduced by Trefftz \cite{Tre26}. T-functions were firstly employed as basis functions in Trefftz methods \cite{Kit95}. Nowadays, they are classically used as interpolation functions to define Trefftz-type finite elements in (hybrid) FEM, Boundary Element Method (BEM) also called Boundary Integral Equation Method (BIEM); they can be found in the form of polynomials, Legendre, harmonic, exponential, Bessel, Hankel, Kelvin (also called singular Kupradze), Boussinesq functions, depending on the governing equations \cite{Zie95,Kom04,Kom05}. Some special purpose T-functions enable to satisfy not only the governing equations, but also take into account special boundary conditions a priori. The interested reader can refer to \cite{Zie85,Zie95} for a complete description of basic sets of T-functions associated to Laplace, Helmholtz and biharmonic equations in 2D and 3D problems, both for bounded and unbounded domains. Combinations of those classical T-functions allow the derivation of T-functions associated to a broad class of problems, such as 2D and 3D elasticity or Mindlin-type plates. 

Let us consider a circular (resp. spherical) shape domain $\omlabar$ of radius $\labar$ in 2D (resp. 3D). The set of T-functions associated to Lam\'e-Navier equations constitutes a basis of functions belonging to space $\Vcb$ on $\omlabar$ \cite{Hoc92}. Thus, any homogeneous solution $\vu \in \Vcb$ can be defined by linear combination of a set of Trefftz functions. Suitable Trefftz functions corresponding to solutions of Lam\'e-Navier equations can be found in cartesian coordinates system in \cite{Sla00} and in polar coordinates system in \cite{Sla02}. T-functions associated to Lam\'e-Navier equations can be expressed in cartesian coordinate system $(x,y)$ in 2D (resp. $(x,y,z)$ in 3D) as polynomials \cite{Sla00}. Transforming cartesian coordinates into polar coordinates $(r,\theta)$ in 2D (resp. spherical coordinates $(r,\theta,\phi)$ in 3D), displacement T-functions $\und{T}$ can be formulated as regular harmonic polynomials of the form:
\begin{align}\label{eq1:Trefftz_functions}
\begin{cases}
T_x = r^i f_2(\theta), \quad T_y = r^j g_2(\theta), \quad i,j = 0,1,2,\ldots & \text{in 2D}; \\
T_x = r^i f_3(\theta,\phi), \quad T_y = r^j g_3(\theta,\phi), \quad T_z = r^l h_3(\theta,\phi), \quad i,j,l = 0,1,2,\ldots & \text{in 3D},
\end{cases}
\end{align}
where functions $f_2$ and $g_2$ (resp. $f_3$, $g_3$ and $h_3$) depend on material parameters $\nu$, $E$ as well as polar coordinate $\theta$ in 2D (resp. spherical coordinates $\theta,\phi$ in 3D) only. 
Let us consider T-function $\Tu^{(m)}$ of maximum degree $m$ in polar (resp. spherical) coordinate $r$, or, equivalently, in cartesian coordinates $(x,y)$ (resp. $(x,y,z)$) in 2D (resp. 3D), with $m \geqslant 1$ (\ie discarding rigid body motions). Corresponding stress T-function $\K \: \defo(\Tu^{(m)})$ can be derived in a polynomial form of degree $m-1$; it follows that energy $e^{(m)} \equiv \Tr\big[\defo(\Tu^{(m)}) \: \K \: \defo(\Tu^{(m)})\big]$ can be put in the following polynomial form:
\begin{align}
\begin{cases}
e^{(m)}(r,\theta) = r^{2(m-1)} \Psi_2(\theta) & \text{in 2D};\\
e^{(m)}(r,\theta,\phi) = r^{2(m-1)} \Psi_3(\theta,\phi) & \text{in 3D},
\end{cases}
\end{align}
where function $\Psi_2$ (resp. $\Psi_3$) depends on material parameters $\nu$, $E$ as well as polar coordinate $\theta$ in 2D (resp. spherical coordinates $\theta,\phi$ in 3D) only. Thus, after some straightforward computations, corresponding Rayleigh quotient $\Rcb_{\labar}(\Tu^{(m)})$ reads as:
\begin{align}
\Rcb_{\labar}(\Tu^{(m)}) = \frac{\labs{\Tu^{(m)}}^2_{u, \domlabar}}{\lnorm{\Tu^{(m)}}^2_{u, \omlabar}} = \begin{cases}
2m & \text{in 2D}; \\
2m+1 & \text{in 3D}.
\end{cases}
\end{align}

Therefore, any displacement T-function $\Tu$ associated to non-vanishing strain satisfies $\displaystyle\Rcb_{\labar}(\Tu) \geqslant 2$ in 2D and $\displaystyle\Rcb_{\labar}(\Tu) \geqslant 3$ in 3D. Besides, let us note that T-functions are also orthogonal with respect to both inner products $\lscalprodp{\bullet}{\circ}_{u, \domlabar}$ and $\lscalproda{\bullet}{\circ}_{u, \omlabar}$ related to Rayleigh quotient $\Rcb_{\labar}$.

Finally, given that any function $\vu \in \Vcb$ (\ie any homogeneous solution of Lam\'e-Navier equations) can be approximated as a linear combination of T-functions, one can show that:
\begin{align}
\min_{\substack{ \vu \in \Vcb }} \Rcb_{\labar}(\vu) = \begin{cases}
2 & \text{for the two-dimensional case of a circular shape domain;} \\
3 & \text{for the three-dimensional case of a spherical shape domain.}
\end{cases}
\end{align}

\bibliographystyle{unsrt}

\end{document}